\newtheorem{thm}{Theorem}[section]
\newtheorem{prop}[thm]{Proposition}
\newtheorem{lem}[thm]{Lemma}
\newtheorem{cor}[thm]{Corollary}
\theoremstyle{definition}
\newtheorem{defn}[thm]{Definition}
\newtheorem{remk}[thm]{Remark}
\newtheorem{remks}[thm]{Remarks}
\newtheorem{exm}[thm]{Example}
\newtheorem{exms}[thm]{Examples}
\newtheorem{notat}[thm]{Notation}
\numberwithin{equation}{section}
\newcommand{\sV}{{\mathcal V}}
\newcommand{\A}{{\mathbb A}}
\newcommand{\C}{{\mathbb C}}
\newcommand{\bL}{{\mathbb L}}
\newcommand{\N}{{\mathbb N}}
\newcommand{\Q}{{\mathbb Q}}
\newcommand{\Z}{{\mathbb Z}}
\newcommand{\surj}{\twoheadrightarrow}
\newcommand{\inj}{\hookrightarrow}
\newcommand{\Sym}{{\operatorname{\rm Sym}}}
\newcommand{\ds}{{/\kern-3pt/}}
\newcommand{\h}{{\operatorname{\rm H}}}
\newcommand{\ch}{{\operatorname{\rm CH}}}
\newcommand{\un}{\underline}
\newcommand{\ov}{\overline}
\newcommand{\wt}{\widetilde}
\newcommand{\wh}{\widehat}
\renewcommand{\dim}{\text{\rm dim}}
\newcommand{\tuborg}{\left\{\begin{array}{ll}}
\newcommand{\sluttuborg}{\end{array}\right.}
\begin{document}
\title{Cobordism of flag bundles}
\author{Amalendu Krishna}
\address{School of Mathematics, Tata Institute of Fundamental Research,  
1 Homi Bhabha Road, Colaba, Mumbai, India}
\email{amal@math.tifr.res.in}

\baselineskip=10pt 
  
\keywords{Algebraic cobordism, group actions, principal bundles}        

\subjclass[2010]{Primary 14C25; Secondary 19E15}
%\maketitle
\begin{abstract}
Let $G$ be a connected linear algebraic group over a field $k$ of
characteristic zero and let $P$ be a parabolic subgroup of $G$ containing 
a fixed maximal torus $T$. For a scheme $X$ of finite type over $k$
and a principal $G$-bundle $E \to X$, we describe the rational algebraic 
cobordism of the flag bundle $E/P \to X$ in terms of the cobordism
groups of $X$ and the classifying space $BT$. In particular, we obtain
formulae for the algebraic cobordism groups of the various flag 
bundles associated to a vector bundle on a scheme. 
As a consequence, we describe the cobordism group of any 
principal bundle over a scheme. We also obtain similar formula for the
higher Chow groups of flag bundles.
%Our description is 
%analogous to the similar formula for the Chow groups of flag bundles 
%obtained earlier by Vistoli. 
\end{abstract}

\maketitle
%\tableofcontents

\section{Introduction}
Let $k$ be a field of characteristic zero. In this paper, we shall consider
only those schemes which are quasi-projective over $k$. 
Based on the construction of the motivic algebraic cobordism spectrum
$MGL$ by Voevodsky in the stable homotopy category of $k$, and the
already known cobordism theory for complex manifolds
\cite{Quillen}, Levine and Morel \cite{LM} invented the algebraic cobordism 
theory $\Omega_*(-)$. The most important aspect of this theory is that
$\Omega_*(-)$ is the universal oriented Borel-Moore homology theory in the
category of $k$-schemes. In particular, it is the universal oriented
cohomology theory in the category of smooth schemes over $k$. 

As a consequence, many known theories, e.g., 
algebraic $K$-theory, Chow groups, can be directly obtained from the 
cobordism theory of Levine and Morel. This makes the question of 
describing the algebraic cobordism groups of various schemes interesting 
and important. Since this theory has been invented only some years ago, not 
many cases of computations of $\Omega_*(-)$ have been known. Levine and Morel
showed that the coefficient ring $\Omega^*(k)$ is isomorphic to the known
Lazard ring. They were also able to describe the algebraic cobordism of
a projective bundle in terms of the cobordism group of the base scheme.
The principal aim of this paper is to generalize this description to the case
of arbitrary flag bundles. As a consequence, we also describe the
cobordism groups of principal $G$-bundles over $k$-schemes. 

In order to motivate our main results, we recall the following 
result, due to Borel and Leray, well known in algebraic topology and its 
analogue in algebraic geometry, due to Vistoli \cite{Vistoli}. Assume 
$k = \C$ is the
field of complex numbers and let $G$ be a connected and reductive complex
algebraic group. We fix a maximal torus $T$ of $G$, a Borel
subgroup $B$ of $G$ containing $T$ and let $W$ denote the Weyl group of $G$
with respect to $T$. Let $\widehat{T}$ denote the character group of $T$
and let ${\rm Sym}(\widehat{T})$ denote the symmetric algebra of 
$\widehat{T} \otimes \Q$ over $\Q$. Let $X$ be a complex manifold and let 
$E \to X$ be a principal 
$G$-bundle. The reader can think of it as a $G(\C)$-fiber bundle over $X(\C)$.
Since the principal bundles are represented by the maps to the classifying 
spaces in topology, we immediately get the characteristic homomorphisms
\[
{{\Sym}(\wh{T})}^W \cong \h^*(BG, \Q) \xrightarrow{c_E} \h^*(X, \Q) \ 
{\rm and} \ {\rm Sym}(\widehat{T}) \xrightarrow{\alpha_E} \h^*(E/B, \Q).
\]
The homomorphism $\alpha_E$ sends a character $t$ of $T$ to the first Chern
class of the associated line bundle $E/B \stackrel{T, t}{\times} \A^1 \to E/B$.
This induces a homomorphism of ${\rm Sym}(\widehat{T})$-algebras
\begin{equation}\label{eqn:fiber-bundle}
\h^*(X, \Q) {\otimes}_{{{\rm Sym}(\widehat{T})}^W} {\rm Sym}(\widehat{T})
\xrightarrow{\lambda_X} \h^*(E/B, \Q)
\end{equation}
and is an isomorphism.     
 
Let $X$ be now a scheme and let $\ch_*(X)$ denote the rational Chow group
of algebraic cycles on $X$ modulo the rational equivalence. Let $A^*(X)$
be the Fulton-MacPherson bivariant cohomology ring of $X$. Recall that
this is a subring of the endomorphism ring of $\ch_*(X)$. In the same set up
as above, Vistoli \cite{Vistoli} showed that there are still the characteristic
maps 
\[
{{\Sym}(\wh{T})}^W  \xrightarrow{c_E} A^*(X) \ 
{\rm and} \ {\rm Sym}(\widehat{T}) \xrightarrow{\alpha_E} A^*(E/B)
\]
such that the induced map of ${\Sym}(\wh{T})$-modules
\begin{equation}\label{eqn:Chow-bundle}
\ch_*(X) {\otimes}_{{{\rm Sym}(\widehat{T})}^W} {\rm Sym}(\widehat{T})
\xrightarrow{\lambda_X} \ch_*(E/B)
\end{equation}
is an isomorphism. This completely describes the Chow groups of the flag
bundle in terms of the Chow group of the base. 

In this paper, we study similar questions for the description of the
higher Chow groups and more importantly, the algebraic 
cobordism groups of generalized flag bundles over any base scheme.
The similar techniques can also be used to write down the description of
the complex cobordism of flag bundles over complex manifolds.
In case of the higher Chow groups, we obtain a more
direct proof of the above formula using the localization sequence for these 
groups. In particular, this yields a different and simpler proof of Vistoli's 
theorem for the Chow groups. The proof in the case of the cobordism
becomes much more complicated, mainly due to the absence 
of the {\sl higher cobordism groups} at present. In this case, we adapt
some of the arguments of \cite{Vistoli} to the case of cobordism. We now
state our main results.

Let $G$ be a connected linear algebraic group and let $T$, $B$ and $W$ be
a fixed split maximal torus, a Borel subgroup containing the maximal torus and
the associated Weyl group respectively. We shall often denote this datum
by the quadruplet $(G, T, B, W)$. Let $r$ denote the rank of $T$.
Let $P$ be a parabolic subgroup of $G$ containing $B$ and let $W_P$ denote
the Weyl group of the Levi subgroup of $P$ with respect to $T$.

Let $S(G)$ and $C(G)$ denote the $G$-equivariant rational Chow ring 
and the algebraic cobordism ring of ${\rm Spec}(k)$ 
(see Section~\ref{section:prelim} below). Since we are interested in
describing the higher Chow groups and cobordism groups with the rational
coefficients, we make the convention that an abelian group $A$ for us
will actually mean $A \otimes_{\Z} \Q$. Furthermore, we shall write
the higher Chow groups and the cobordism groups cohomologically in this paper
in the sense that $\ch^i(X, n)$ and $\Omega^i(X)$ will mean the groups
$\ch_{{\rm dim}(X)-i}(X, n)$ and $\Omega_{{\rm dim}(X)-i}(X)$ respectively. 
For any scheme $X$, we shall write the full Chow groups as
\[
\ch^*(X) = \stackrel{\infty}{\underset{i = 0}\oplus} \
\stackrel{\infty}{\underset{n = 0}\oplus} \ch^i(X,n).
\]

Let $p: E \to X$ be a principal $G$-bundle and let $\pi : E/P \to X$ be the
flag bundle associated to the parabolic subgroup $P$. 
We show in Section~\ref{section:prelim} below
that the algebraic cobordism groups $\Omega^*(X)$ and $\Omega^*(E/P)$ are
modules over the rings $C(G)$ and $C(P)$ respectively.
Moreover, it is known ({\sl cf.} \cite[Theorem~6.6]{Krishna3}) that
$C(G) = {C(T)}^W$ and $C(P) \cong {C(T)}^{W_P}$.
The similar methods also show that  
the higher Chow groups $\ch^*(X, n)$ and $\ch^*(E/P, n)$ are 
modules over the rings $S(G) = S^W$ and $S(P) = S^{W_P}$ respectively. 
Now we have:
\begin{thm}\label{thm:Cob-PR}
The natural map of $C(P)$-modules
\begin{equation}\label{eqn:CM1}
\lambda_X : \Omega^*(X) \otimes_{C(G)} C(P) \to \Omega^*(E/P)
\end{equation}
is an isomorphism. Moreover, it is an isomorphism of rings if $X$ is smooth.
\end{thm}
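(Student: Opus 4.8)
The plan is to reduce the general parabolic case to the Borel case and then build up $E/B$ from $E/P$ by a tower of projective bundles, mirroring the strategy of Vistoli \cite{Vistoli} but adapted to cobordism, where no localization sequence is available. First I would treat the case $P = B$. Here $E/B \to X$ factors as a tower
\[
E/B = E/B_0 \to E/B_1 \to \cdots \to E/B_\ell = X,
\]
where each $B_i$ is obtained from $B_{i+1}$ by adding one more root, so that each map in the tower is a projective bundle (a $\P^1$-bundle, in fact, after passing to the associated $\SL_2$ or $\GL_2$-quotients). Applying the projective bundle formula of Levine--Morel \cite{LM} at each stage expresses $\Omega^*(E/B)$ as a free $\Omega^*(X)$-module with an explicit basis indexed by the Weyl group $W$, the basis elements being monomials in the first cobordism Chern classes $\alpha_E(t)$ of the tautological line bundles $E \stackrel{T,t}{\times}\A^1$. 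This is exactly the content of the map $\lambda_X$ on the right-hand side of \eqref{eqn:CM1} when $P=B$: the target $C(B)=C(T)$ is a free $C(G)=C(T)^W$-module of rank $|W|$ (a Chevalley-type statement, available rationally), and $\lambda_X$ matches the two bases.

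Next I would descend from $B$ to $P$. The map $E/B \to E/P$ is itself a flag bundle with fiber $P/B$, which is the full flag variety of the Levi subgroup of $P$; by the same tower-of-projective-bundles argument applied relatively, $\Omega^*(E/B)$ is free over $\Omega^*(E/P)$ with basis indexed by $W_P$, and likewise $C(T)=C(P)\otimes_{?}\cdots$ is free over $C(P)=C(T)^{W_P}$ of rank $|W_P|$. Combining the two freeness statements and using that $|W| = |W/W_P|\cdot|W_P|$, one gets a commutative diagram
\[
\Omega^*(X)\otimes_{C(G)}C(B)\ \xrightarrow{\ \lambda_X\ }\ \Omega^*(E/B)
\]
sitting over the analogous diagram for $P$, with both vertical arrows free extensions of the same rank; a base-change/faithful-flatness argument (or simply comparing ranks of free modules, since everything is free over $\Omega^*(X)$) then forces the $P$-level $\lambda_X$ to be an isomorphism once the $B$-level one is. This is the step I expect to require the most care: one must check that the relevant change-of-rings maps $C(G)\to C(P)\to C(B)$ are themselves free (or at least faithfully flat) extensions of the coefficient rings, so that $-\otimes_{C(G)}C(P)$ is exact and the diagram chase goes through; the rational coefficients and the identifications $C(G)=C(T)^W$, $C(P)=C(T)^{W_P}$ from \cite[Theorem~6.6]{Krishna3} are what make this work.

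Finally, for the ring statement when $X$ is smooth: in that case $\Omega^*$ is an oriented cohomology theory with a cup product, $E/P$ is again smooth, and all the maps in the tower of projective bundles are maps of smooth schemes, so the projective bundle isomorphisms are ring isomorphisms. The map $\lambda_X$ is then a map of $\Omega^*(X)$-algebras by construction (it sends $t\in\widehat T$ to $\alpha_E(t)$, a genuine Chern class, and these are multiplicative), so being a bijective algebra homomorphism it is a ring isomorphism. The main obstacle throughout is purely that cobordism has no higher groups, so one cannot invoke a localization long exact sequence as Vistoli does for Chow groups; the projective bundle formula, applied iteratively and relatively, is the substitute, and the bookkeeping of bases indexed by cosets of Weyl groups is what needs to be organized carefully.
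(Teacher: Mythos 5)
Your reduction from $P$ to $B$ at the end is sound and is essentially the paper's Section~7 argument in a different dress: the paper applies the Borel-case theorem to the Levi subgroup $M$ of $P$ (so that $E/B\to E/P$ is an $M/B$-bundle), getting $\Omega^*(E/P)\otimes_{C(P)}C(T)\cong\Omega^*(E/B)$, and then takes $W_P$-invariants using Corollary~\ref{cor:Weyl-PR}; you would get the same conclusion by faithful flatness of $C(T)$ over $C(P)$. Either works once the Borel case is in hand.

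The problem is your Borel case. You assert that $E/B\to X$ factors as a tower of projective bundles, one $\P^1$-bundle per simple reflection. This is false for a general connected reductive $G$ and a general torsor $E$. Each intermediate fibration is a $\P^1$-fibration (indeed a family of Brauer--Severi curves), but it is a projective bundle $\P(V)\to Y$ only when the corresponding relative Brauer class vanishes. That is automatic when $G$ is special (e.g.\ $GL_n$, $SL_n$, $Sp_{2n}$), which is why the tower argument works for flag bundles of a vector bundle, but it fails for instance when $G=PGL_2$ and $E\to X$ is a torsor with nonzero Brauer class: $E/B\to X$ is then a Brauer--Severi conic bundle, not $\P(V)$, and the projective bundle formula of \cite{LM} simply does not apply. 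So the very first step of your plan has no footing in the generality claimed.

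Relatedly, your remark that cobordism has no localization sequence misreads the situation: cobordism does have a right-exact localization sequence $\Omega_*(Z)\to\Omega_*(X)\to\Omega_*(U)\to 0$, and this is precisely what the paper exploits. The actual proof of the Borel case proceeds by (i) proving the statement for \emph{trivial} flag bundles $X\times G/B\to X$ over smooth $X$, using the identification $\Omega^*(G/B)\cong\bL\otimes_\Q\Lambda$ together with the structure theory of $C(T)^W\hookrightarrow C(T)$ (Sections~4 and~6); (ii) using the fact that any $E$ trivializes after a finite \'etale cover, plus a transfer/degree argument on a dense smooth open $U\subset X$ (Lemma~\ref{lem:star}, Corollary~\ref{cor:Retract}); and (iii) patching by noetherian induction with the localization sequence (Proposition~\ref{prop:Surj} for surjectivity, Proposition~\ref{prop:MAIN*} and the auxiliary operator $\psi_X$ for injectivity). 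Your approach could perhaps be salvaged by first proving a rational projective-bundle formula for \'etale-locally-trivial $\P^1$-fibrations, but that theorem would itself require the same \'etale-cover plus transfer plus localization machinery, so nothing is gained over the paper's route.
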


\begin{thm}\label{thm:HC-Main}
The natural map of $S(P)$-modules
\begin{equation}\label{eqn:CM2}
\alpha_X : \ch^*(X) \otimes_{S(G)} S(P) \to \ch^*(E/P)
\end{equation}
is an isomorphism. This is an isomorphism of rings if $X$ is smooth.
\end{thm}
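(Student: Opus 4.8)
The plan is to prove Theorem~\ref{thm:HC-Main} by reducing the general parabolic case to the case $P = B$ of the full flag bundle, and then to attack that case directly using the localization sequence for higher Chow groups together with the Bruhat decomposition of $E/B$ over $E/P$. More precisely, since $C(P) \cong C(T)^{W_P}$ and $S(P) = S^{W_P}$ by the results quoted from \cite{Krishna3}, and since $E/P \to X$ factors as $E/B \to E/P \to X$ with $E/B \to E/P$ itself a flag bundle (for the Levi of $P$ acting on $E/B$, fiberwise $G/B \to G/P$ has fibers $L_P/B_{L_P}$), one expects a transitivity statement: if the theorem holds for $E/B \to X$ and for $E/B \to E/P$, then it holds for $E/P \to X$. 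This is because $S^{W_P}$ is a free $S^W$-module (the coinvariant algebra $S^W \otimes_{S^{W_P}} \cdots$ being classical, with basis indexed by $W/W_P$), so tensoring is exact and one can take $W_P$-invariants on both sides of the isomorphism for $E/B$. Thus the crux is the case $P = B$.

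For $P = B$, I would argue by induction on the dimension of $G$ (or on the number of positive roots), using the tower of $\P^1$-bundles $E/B = E/B_0 \to E/B_1 \to \cdots \to E/T$ obtained from a reduced expression of the longest element, or more efficiently by directly filtering $E/B$ over $X$ by the Bruhat cells. The key input is the projective bundle formula for higher Chow groups (which follows from the homotopy and localization properties and is classical), giving $\ch^*(\P(\sE))\cong \ch^*(Y)[\xi]/(\text{relation})$ for a vector bundle $\sE$ on $Y$; iterating this along a Bott–Samelson-type resolution expresses $\ch^*(E/B)$ as a $\ch^*(X)$-algebra generated by the line-bundle classes $\alpha_E(t)$, $t \in \wh T$, with relations coming from the base. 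Concretely, on $E/B$ one has the $r$ tautological line bundles whose first Chern classes give the map $\alpha_X$; the localization sequence shows each Bruhat cell contributes a free $\ch^*(X)$-summand, and comparing ranks with the free $S^W$-module $S$ of rank $|W|$ forces $\alpha_X$ to be an isomorphism once one checks it is surjective. Surjectivity is the Leray–Hirsch-type statement: the classes $\alpha_E(t)$ restrict on each fiber to a basis of $\ch^*(G/B)$, which is exactly Borel's theorem $\ch^*(G/B) = S \otimes_{S^W} \Q = S/(S^W_+)$.

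The main obstacle I anticipate is making the "comparison of ranks" rigorous without a clean Leray–Hirsch principle available off the shelf for higher Chow groups of non-smooth $X$: one must genuinely run the localization long exact sequences for the Bruhat stratification of $E/B \to X$ and show all connecting maps vanish, i.e. that the filtration splits. Following \cite{Vistoli}, the vanishing of the connecting homomorphisms should follow because each locally closed stratum is an affine-space bundle over a smaller flag bundle (so its higher Chow groups are computed by homotopy invariance and the inductive hypothesis), and the relevant Gysin/boundary maps are detected after pulling back to the smooth classifying-space approximations, where the corresponding statement for equivariant Chow groups is Vistoli's theorem. A second, more bookkeeping-type obstacle is the compatibility of all these identifications with the $S^{W_P}$-algebra structure and, when $X$ is smooth, with the ring structure: here one uses that on smooth $X$ the higher Chow groups form a graded-commutative ring, the projective bundle formula is multiplicative, and $\alpha_X$ is by construction a ring map on the level of generators, so injectivity plus surjectivity upgrades it to a ring isomorphism. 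Finally, the passage back from $P=B$ to general $P$ needs the compatibility $S^{W_P} = S(P)$ and the freeness of $S$ over $S^{W_P}$, both of which are standard for Weyl groups in characteristic zero and already invoked in the statement via \cite{Krishna3}.
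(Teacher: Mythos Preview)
Your reduction from general $P$ to $P = B$ via $W_P$-invariants is correct and is exactly what the paper does in Section~\ref{section:Parabolic}. The gap is in your argument for $P = B$.

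The central claim---that $E/B \to X$ carries a global Bruhat stratification by affine-space bundles over $X$, or factors as a tower of $\P^1$-bundles over $X$---is false for a non-trivial principal $G$-bundle. The Bruhat cells $BwB/B \subset G/B$ are $B$-orbits, not $G$-invariant subvarieties, so the associated-bundle construction $E \times_G (-)$ does not apply to them; even when $E$ is Zariski-locally trivial the local Bruhat stratifications do not glue, because the transition functions lie in $G$ rather than $B$. Already for $G = PGL_2$ a global Bruhat $0$-cell would be a section of the $\P^1$-bundle $E/B \to X$, which does not exist in general. Likewise, a chain of parabolics $B \subset P_1 \subset \cdots \subset G$ gives successive fibres $P_i/P_{i-1}$ that are not $\P^1$'s (nor projective spaces) once one leaves type $A$; and Bott--Samelson towers, while iterated $\P^1$-bundles over a point, carry only a $B$-action and so cannot be twisted by the $G$-bundle $E$. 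Thus the obstacle is not that the connecting maps in your localization sequences might fail to vanish, but that the filtration you want to run them along does not exist.

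The paper's proof for $P = B$ is organized differently. It first handles the trivial bundle $G/B \times X \to X$ directly (Lemma~\ref{lem:HC-trivial}), combining the equivariant identification $\ch^*(G/B) \cong S \otimes_{S^W} \Q$ with a K\"unneth isomorphism. It then runs induction on $\dim X$: choose a smooth dense open $U \subset X$ admitting a finite \'etale cover over which the bundle trivializes, deduce the isomorphism over $U$ by a transfer argument (the higher-Chow analogue of Corollary~\ref{cor:Retract}), and compare the long exact localization sequences for $(X \setminus U) \hookrightarrow X \hookleftarrow U$ and their pullbacks to $E/B$ via the five-lemma, using flatness of $S$ over $S^W$. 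The stratification is of the \emph{base} $X$, where \'etale-local triviality of the bundle is available, not of the total space $E/B$; and the long-exactness of the localization sequence for higher Chow groups is precisely what makes this direct argument work here, in contrast to the cobordism case treated earlier in the paper.
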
 

As consequences of these results, we obtain the formulae ({\sl cf.} 
Corollaries~\ref{cor:Cob-Group} and ~\ref{cor:HC-Main-group}) for the 
cobordism and the higher Chow groups of principal bundles.
We remark here that as we are working with the rational coefficients, the
assumption about the maximal torus $T$ being split is not a necessary one.
One can reduce to this case by the transfer arguments.

We conclude the introduction with a brief outline of the contents of this
paper. We recall the definitions and some important properties of the
ordinary and the equivariant algebraic cobordism in the next section.
We use these fundamental properties to construct our map $\lambda_X$
in Section~\ref{section:prelim}. We also deduce some functorial properties
of this map with respect to morphisms between schemes.
In section~\ref{section:ALG-R}, we prove some algebraic results and use these
together with some results of \cite{Krishna3} to deduce our main result
for algebraic cobordisms of trivial flag bundles over smooth schemes.
In Section~\ref{section:Surj}, we prove the surjectivity of $\lambda_X$
using the localization sequence for the cobordism, the corresponding
result for the trivial flag bundles and an induction argument.
We prove Theorem~\ref{thm:Cob-Main} in Section~\ref{section:ProofI} by first 
proving it for the trivial flag bundles, which uses the proof of the similar 
result for the Chow groups of trivial bundles in \cite{Vistoli}, and then 
using a filtration argument for general schemes. The final proof of 
Theorem~\ref{thm:Cob-PR} is given in Section~\ref{section:Parabolic}, where
we deduce this from the case of flag bundles associated to the Borel
subgroups. The last section is devoted to the proof of 
Theorem~\ref{thm:HC-Main} where the main tool is the long exact localization
sequence for the higher Chow groups.
  
\section{Recollection of ordinary and equivariant cobordism}
\label{section:prelim1}
In this section, we briefly recall the definitions and basic properties
of the ordinary and equivariant algebraic cobordism.
\subsection{Algebraic cobordism}\label{subsection:Alg-C}
Recall from \cite{LM} that for any scheme $X$ and any
$i \in \Z$, the algebraic cobordism group $\Omega^i(X)$ is given by the
quotient of the $\Q$-vector space $Z^i(X)$ on the classes of projective 
morphisms $[Y \xrightarrow{f} X]$, where $Y$ is a smooth scheme and $f$ has
relative codimension $i = {\dim}(X) - {\rm dim}(Y)$. 
This quotient is obtained by the relations in 
$Z^i(X)$ defined by certain axioms like the dimension axiom, section axiom and
the formal group law. It was later shown by Levine and Pandharipande 
\cite{LP} that $\Omega^i(X)$ can also be described as the quotient of
$Z^i(X)$ by the subspace generated by those cobordism cycles which are given
by the {\sl double point degeneration} relation. In particular, there
is a natural surjection $Z^*(X) \surj \Omega^*(X)$. It also follows that
$\Omega^*(X)$ is a graded $\Q$-vector space, where the grading is given
by the codimension of a cobordism cycle. Moreover, $\Omega^i(X) = 0$ for 
$i > {\rm dim}(X)$ and $\Omega^i(X)$ could be non-zero for any $- \infty <
i \le {\rm dim}(X)$. In fact, the exterior product on cobordism makes
$\Omega^*(X)$ a graded ring for smooth $X$, which is a graded 
$\Omega^*(k)$-algebra. In general, $\Omega^*(X)$ is a graded 
$\Omega^*(k)$-module.
 
The following is the main result of Levine and Morel from which most of their
other results on algebraic cobordism are deduced. 
\begin{thm}\label{thm:Levine-M}
The functor $X \mapsto \Omega_*(X)$ is the universal Borel-Moore homology
on the category of $k$-schemes. In other words, it is universal among the 
homology theories on this category which have functorial push-forward for 
projective 
morphism, pull-back for smooth morphism (any morphism of smooth schemes), 
Chern classes for line bundles, and which satisfy Projective bundle formula, 
homotopy invariance, the above dimension, section and formal group law axioms.
Moreover, for a $k$-scheme $X$ and closed subscheme $Z$ of $X$ of pure
codimension $p$ with open complement $U$, there is a localization
exact sequence
\[
\Omega_{*}(Z) \to \Omega_*(X) \to \Omega_*(U) \to 0.
\]
\end{thm}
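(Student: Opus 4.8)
The plan is to take this theorem essentially as a black box citation to \cite{LM}, since it is the foundational result of Levine--Morel and reproving it is far outside the scope of this paper. So the "proof" here would really be a pointer to the relevant parts of \cite{LM} together with a brief indication of why the two halves of the statement hold. First I would recall that the construction of $\Omega_*(-)$ proceeds by starting with the free presheaf $\mathcal{Z}_*(-)$ on cobordism cycles $[Y \xrightarrow{f} X]$ with $Y$ smooth and $f$ projective, then imposing successively the relations coming from the dimension axiom, the section axiom, and compatibility with the formal group law of the Lazard ring; the universality is then proved by checking that any other oriented Borel--Moore homology theory $A_*$ admits a unique natural transformation $\Omega_* \to A_*$, constructed by sending a generator $[Y \xrightarrow{f} X]$ to $f_*(1_Y)$ and verifying that all the imposed relations are respected because $A_*$ satisfies the same axioms. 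This is \cite[Theorem~7.1.1]{LM} (the precise numbering would be inserted).

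For the localization sequence, the reference is \cite[Theorem~3.2.7]{LM}: given a closed immersion $i\colon Z \hookrightarrow X$ with open complement $j\colon U \hookrightarrow X$, one has right-exactness of
\[
\Omega_*(Z) \xrightarrow{i_*} \Omega_*(X) \xrightarrow{j^*} \Omega_*(U) \to 0.
\]
Surjectivity of $j^*$ is easy: any cobordism cycle over $U$ is represented by $[Y \xrightarrow{f} U]$ with $Y$ smooth, and one extends to a cobordism cycle over $X$ by taking a projective closure $\overline{Y}$ of $Y$ in an ambient projective bundle over $X$, then resolving singularities (here $\Char k = 0$ is used) to obtain a smooth $\overline{Y}' \to X$ restricting to $[Y \to U]$ over $U$. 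Exactness in the middle — that $\ker(j^*)$ is generated by the image of $i_*$ — is the genuinely hard part and is the technical heart of \cite{LM}; it rests on the fact that a cobordism cycle over $X$ that becomes trivial over $U$ can, after moving it by the double-point relations of \cite{LP}, be represented by cycles supported over $Z$. I would simply cite this rather than reprove it.

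I do not expect any real obstacle in this section, precisely because nothing original is being claimed: the statement is quoted verbatim from the literature and used as a foundational input for the constructions of Section~\ref{section:prelim}. The one point worth flagging for the reader is the hypothesis $\Char k = 0$, which enters through resolution of singularities (needed both for the construction of $\Omega_*$ itself and for the extension step in the localization sequence above); this is why the blanket characteristic-zero assumption is in force throughout the paper.
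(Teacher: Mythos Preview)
Your proposal is correct and matches the paper's approach exactly: the paper states this theorem purely as a recollection of the main result of Levine--Morel \cite{LM} and gives no proof whatsoever, so your plan to cite \cite{LM} as a black box (with a brief sketch of why universality and localization hold) is, if anything, more detailed than what the paper actually does.
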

It was also shown in {\sl loc. cit.} that the natural composite map
\[
\Phi : \bL \to \bL \otimes_{\Q} \un{\Omega}^*(k) \surj \Omega^*(k)
\]
\[
a \mapsto [a]
\]
is an isomorphism of commutative graded rings.
Here, $\bL$ is the Lazard ring which is 
a polynomial ring over $\Q$ on infinite but countably many variables and 
is given by the quotient of the polynomial ring $\Q[A_{ij}| (i,j) \in \N^2]$ 
by the relations, which uniquely define the universal formal group law 
$F_{\bL}$ of rank one on $\bL$.  This formal group law is given by the power 
series
\[
F_{\bL}(u,v) = u + v + {\underset {i,j \ge 1} \sum} a_{ij} u^iv^j,
\]
where $a_{ij}$ is the equivalence class of $A_{ij}$ in the ring $\bL$.
The Lazard ring is graded by putting the degree of $a_{ij}$ to be $1-i-j$.
In particular, one has $\bL_{0} = \Q, \bL_{-1} = \Q a_{11}$ and $\bL_{i} = 0$
for $i \ge 1$, that is, $\bL$ is non-positively graded. We refer to 
{\sl loc. cit.} for more properties of algebraic cobordism.

\subsection{Equivariant algebraic cobordism}\label{subsection:Equiv-C}
Let $(G, T, B, W)$ be the datum as above for a given connected linear algebraic
group $G$ over $k$. For a scheme $X$ with a linear action of $G$, the
equivariant algebraic cobordism of $X$ was defined by Deshpande \cite{DP}
when $X$ is smooth and this was later defined and studied for all schemes
in \cite{Krishna3}. Since this is a new theory and since we shall have need for
this here, albeit in a mild way, we briefly recall it.
  
For any integer $j \ge 0$, let $V_j$ be an $l$-dimensional representation of 
$G$ and let $U_j$ 
be a $G$-invariant open subset of $V_j$ such that the codimension of the 
complement $(V_j-U_j)$ in $V_j$ is at least $j$ and $G$ acts freely on 
$U_j$ such that the quotient ${U_j}/G$ is a quasi-projective scheme. Such a 
pair $\left(V_j,U_j\right)$ is called a {\sl good} pair for the $G$-action
corresponding to $j$. It is known that in our set up, 
good pairs always exist ({\sl cf.} \cite[Lemma~9]{EG}). 
Let $X_G$ denote the mixed quotient 
$X \stackrel{G} {\times} U_j$ of the product $X \times U_j$ by the 
diagonal action of $G$, which is free.

Let $X$ be a $k$-scheme of dimension $d$ with a $G$-action.
Fix $j \ge 0$ and let $(V_j, U_j)$ be an $l$-dimensional good pair 
corresponding to $j$. Put 
\begin{equation}\label{eqn:E-cob*}
{\Omega^G_i(X)}_j =  \frac{\Omega_{i+l-g}\left({X\stackrel{G} 
{\times} U_j}\right)}
{F_{d+l-g-j}\Omega_{i+l-g}\left({X\stackrel{G} {\times} U_j}\right)}.
\end{equation}
Here, $F_p\Omega_i(X)$ is the $p$th level of the Niveau filtration 
({\sl cf.} \cite[Section~3]{Krishna3}) which is roughly given by the 
subspace of $\Omega_i(X)$ generated by the images of 
$\Omega_i(Z) \to \Omega_i(X)$
under the push-forward map, where $Z \inj X$ is a closed subscheme of
dimension at most $i$. It is known that ${\Omega^G_i(X)}_j$ is independent of 
the choice of the good pair $(V_j, U_j)$ and one defines
\begin{equation}\label{eqn:ECob-D}
\Omega^G_i(X) : = {\underset {j} \varprojlim} \ \Omega^G_i(X)_j.
\end{equation}
The reader should note from the above definition that unlike the ordinary
cobordism, the equivariant algebraic cobordism $\Omega^G_i(X)$ can be 
non-zero for any $i \in \Z$. We let 
\[
\Omega^G_*(X) = {\underset{i \in \Z} \bigoplus} \ \Omega^G_i(X)
\]
and we let 
\begin{equation}\label{eqn:EQC1}
\Omega^*_G(X) = {\underset{i \in \Z} \bigoplus} \ \Omega^i_G(X),
\ {\rm where} \ \Omega^i_G(X) = \Omega^G_{{\rm dim}(X) -i}(X).
\end{equation}

The equivariant cobordism satisfies all those properties which are listed in
Theorem~\ref{thm:Levine-M} for the ordinary algebraic cobordism. Since we shall
need some of these properties, we state them below for the sake of
completeness.
\begin{thm} ({\sl cf.} \cite[Theorems~5.1, 5.4]{Krishna3})\label{thm:Basic}
The equivariant algebraic cobordism satisfies the following properties. \\
$(i)$ {\sl Functoriality :} The assignment $X \mapsto \Omega_*(X)$ is
covariant for projective maps and contravariant for smooth maps of
$G$-schemes. It is also contravariant for l.c.i. morphisms of $G$-schemes. \\
$(ii) \ Homotopy :$  If $f : E \to X$ is a $G$-equivariant vector bundle,
then $f^*: \Omega^G_*(X) \xrightarrow{\cong} \Omega^G_*(E)$. \\
$(iii) \ Chern \ classes :$ For any $G$-equivariant vector bundle $E
\xrightarrow{f} X$ of rank $r$, there are equivariant Chern class operators
$c^G_l(E) : \Omega^G_*(X) \to \Omega^G_{*-l}(X)$ for $1 \le l \le r$ which
have same functoriality properties as in the non-equivariant case. \\
$(iv) \ Free \ action :$ If $G$ acts freely on $X$ with quotient $Y$, then
$\Omega^G_*(X) \xrightarrow{\cong} \Omega_*(Y)$. \\
$(v) \ Exterior \ Product :$ There is a natural product map
\[
\Omega^G_i(X) \otimes_{\Z} \Omega^G_{i'}(X') \to \Omega^G_{i+i'}(X \times X').
\]
In particular, $\Omega^G_*(k)$ is a graded algebra and $\Omega^G_*(X)$ is
a graded $\Omega^G_*(k)$-module for every $X \in \sV_G$. \\
%What is $\Omega^G_0(k)$?
$(vi) \ Projection \ formula :$ For a projective map $f : X' \to X$ in
$\sV^S_G$, one has for $x \in \Omega^G_*(X)$ and $x' \in \Omega^G_*(X')$,
the formula : $f_*\left(x' \cdot f^*(x)\right) = f_*(x') \cdot x$. \\
$(vii) \ Localization \ sequence :$ For a $G$-invariant closed subscheme
$Z \subset X$ with the complement $U$, there is an exact sequence
\[
\Omega^G_*(Z) \to \Omega^G_*(X) \to \Omega^G_*(U) \to 0.
\]
\end{thm}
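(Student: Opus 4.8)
The plan is to deduce every item from the corresponding property of ordinary algebraic cobordism (Theorem~\ref{thm:Levine-M}), applied to the finite-dimensional mixed quotients $X \stackrel{G}{\times} U_j$, and then to pass to the inverse limit over $j$ using the presentation $\Omega^G_i(X) = {\varprojlim}_j \, \Omega^G_i(X)_j$ of \eqref{eqn:E-cob*}--\eqref{eqn:ECob-D}. Thus for each operation to be constructed --- pushforward along a projective $G$-morphism, pullback along a smooth or l.c.i.\ $G$-morphism, an equivariant Chern class operator, an exterior product --- the first step is to produce the operation on the ordinary cobordism of the approximations: a $G$-equivariant projective (resp.\ smooth, l.c.i.)\ map $Y \to X$ induces such a map $Y \stackrel{G}{\times} U_j \to X \stackrel{G}{\times} U_j$ of quasi-projective $k$-schemes, and a $G$-equivariant vector bundle $E \to X$ of rank $r$ induces an ordinary rank-$r$ bundle $E \stackrel{G}{\times} U_j \to X \stackrel{G}{\times} U_j$. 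The second step is to check that the operation carries the relevant level $F_{\bullet}$ of the niveau filtration into the relevant level on the target; this is a dimension count, using that the niveau filtration is indexed by dimension of supports and that these morphisms shift dimensions by a fixed amount. The third step is compatibility with the transition maps of the inverse system as $j$ varies, after which one takes ${\varprojlim}_j$. The identities among these operations (functoriality of pushforward, the projection formula, the $\Omega^G_*(k)$-module structure) are inherited degreewise from Theorem~\ref{thm:Levine-M} and survive the limit verbatim.

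Homotopy invariance (ii) and the free-action statement (iv) fit the same template. Item (ii) is just ordinary homotopy invariance applied to the bundle $E \stackrel{G}{\times} U_j \to X \stackrel{G}{\times} U_j$. For (iv), when $G$ acts freely on $X$ with quotient $Y$ the bundle $X \stackrel{G}{\times} V_j \to Y$ is a vector bundle, so $\Omega_*(Y) \cong \Omega_*(X \stackrel{G}{\times} V_j)$ by homotopy invariance, while $X \stackrel{G}{\times} U_j$ differs from $X \stackrel{G}{\times} V_j$ by a closed subset whose codimension grows with $j$; by the localization sequence the restriction $\Omega_*(X \stackrel{G}{\times} V_j) \to \Omega_*(X \stackrel{G}{\times} U_j)$ is surjective with kernel of low niveau, and after the truncation in \eqref{eqn:E-cob*} and the passage to the limit one recovers $\Omega_*(Y)$. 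In both cases one still verifies compatibility with truncations and transition maps before taking ${\varprojlim}_j$, but no new phenomenon occurs.

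The genuine obstacle is the localization sequence (vii). For every $j$, Theorem~\ref{thm:Levine-M} gives the right-exact sequence $\Omega_*(Z \stackrel{G}{\times} U_j) \to \Omega_*(X \stackrel{G}{\times} U_j) \to \Omega_*(U \stackrel{G}{\times} U_j) \to 0$, and one must show both that it descends to the niveau-filtration truncations of \eqref{eqn:E-cob*} and, crucially, that right-exactness is preserved after applying ${\varprojlim}_j$. The latter is delicate, since $\varprojlim$ is only left exact: one needs the relevant ${\varprojlim}^1$ groups to vanish. This should follow from a Mittag-Leffler argument, namely that all transition maps in the inverse systems $\{\Omega^G_*(-)_j\}$ are surjective --- a consequence of the projective bundle formula, which exhibits $\Omega_*((-) \stackrel{G}{\times} U_{j+1})$ as built from $\Omega_*((-) \stackrel{G}{\times} U_j)$ up to the high-codimension defect and thereby forces surjectivity --- so that the systems are Mittag-Leffler and the inverse limit of the right-exact sequences stays right exact. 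Keeping the niveau filtration honestly functorial throughout the above --- so that the quotients in \eqref{eqn:E-cob*} are well defined and the constructions are independent of the choice of good pair --- is the technical core, and is precisely the content of the cited results of \cite{Krishna3}.
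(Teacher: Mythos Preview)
The paper does not prove this theorem at all: it is stated with the citation ``cf.\ \cite[Theorems~5.1, 5.4]{Krishna3}'' and used as a black box. Your proposal is therefore not to be compared against a proof in this paper, but against the argument in the cited reference. As a sketch of that argument your outline is essentially correct --- the equivariant operations are built levelwise on the mixed quotients $X \stackrel{G}{\times} U_j$, shown to respect the niveau truncation, and then passed to the inverse limit --- and you have correctly isolated the localization sequence (vii) as the only place where something nontrivial happens beyond bookkeeping.

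One point deserves sharpening. Your justification for the Mittag--Leffler condition (``a consequence of the projective bundle formula, which exhibits $\Omega_*((-)\stackrel{G}{\times} U_{j+1})$ as built from $\Omega_*((-)\stackrel{G}{\times} U_j)$'') is not quite the mechanism. The surjectivity of the transition maps $\Omega^G_i(X)_{j+1} \to \Omega^G_i(X)_j$ comes more directly from the definition \eqref{eqn:E-cob*}: increasing $j$ means quotienting by a deeper level $F_{d+l-g-j}$ of the niveau filtration, so the transition map is a further quotient and hence surjective. The projective bundle formula enters earlier, in showing that $\Omega^G_i(X)_j$ is independent of the choice of good pair $(V_j,U_j)$, not in the surjectivity of transitions. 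With that correction, your plan matches the content of \cite{Krishna3}.
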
 

For a $G$-equivariant vector bundle $E$ on $X$, we shall often denote the
equivariant Chern class operators as $c^G_i(E) \cap -$. Note that these
Chern classes behave like the Chern classes of the ordinary vector bundles
on the ordinary cobordism of the mixed spaces defined before. 
In particular, if $\chi$ is a character of $G$ (which is just a $G$-equivariant
line bundle on ${\rm spec}(k)$), the above exterior product
is explicitly described as 
\begin{equation}\label{eqn:Module-S}
\Omega^*_G(X) \otimes_{\bL} \Omega^*(k) \to \Omega^*(X)
\end{equation}
\[
w \otimes c^G_1(L_{\chi}) \mapsto c^G_1\left(p^*(L_{\chi})\right) \cap w
\]
if $p : X \to {\rm Spec}(k)$ is the structure map. Here, $L_{\chi}$ is the 
line bundle associated to $\chi$ and $c^G_1(L_{\chi})$
is identified with the element $c^G_1(L_{\chi})(id)$ in the ring $C(G)$.

We shall denote $C(G) : = \Omega^*(k)$ by $\Omega^*(BG)$ and call it as the 
cobordism ring of the {\sl classifying space} of $G$. 
It is known from the universal
property of the algebraic cobordism that for a complex linear algebraic group
$G$, there is a natural map of rings
\begin{equation}\label{eqn:Al-Top}
\rho_G : \Omega^*(BG) \to MU^*(BG),
\end{equation}
where $MU^*(BG)$ is the rational complex cobordism group of the topological
classifying space of $G(\C)$. Moreover, this realization map is in fact 
an isomorphism ({\sl cf.} \cite[Theorem~6.8]{Krishna3}). Thus, $\Omega^*(BG)$
is truly the cobordism ring of the classifying space of $G$.

If $H \subset G$ is a closed
subgroup, then for any $G$-scheme $X$ and a good pair $(V_j, U_j)$, the 
$G/H$-fibration $X_H \to X_G$ induces a natural restriction map
\begin{equation}\label{eqn:Res}
r^G_H : \Omega^*_G(X) \to \Omega^*_H(X)
\end{equation}  
which in particular gives a natural $\Q$-algebra homomorphism
$\Omega^*(BG) \to \Omega^*(BH)$. This restriction map in fact completely 
describes $\Omega^*(BG)$ in terms of $\Omega^*(BT)$ in the following way.
\begin{thm}({\sl cf.} \cite[Theorem~6.6]{Krishna3})\label{thm:W-inv} 
The natural map $\Omega^*_G(X) \to \Omega^*_T(X)$ induces an
isomorphism of $C(G)$-modules
\[
\Omega^*_G(X) \xrightarrow{\cong} \left(\Omega^*_T(X)\right)^W.
\]
In particular, one has $C(G) \xrightarrow{\cong} {C(T)}^W$.
\end{thm}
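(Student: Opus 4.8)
The plan is to reduce the statement to a computation on the mixed spaces $X_T \to X_G$, which is an étale-locally trivial $G/T$-fibration, and then invoke the known structure of the cobordism of $G/T$ together with a transfer (averaging) argument. First I would recall that for a fixed good pair $(V_j, U_j)$ for the $G$-action, the space $X_T = X \stackrel{G}{\times}(U_j)$ sits in a fibration $\pi_j : X_T \to X_G$ with fibre $G/T$, obtained from the free $G$-action on $U_j$; concretely $X_T = (X_G) \times_{BG} BT$ at the finite-dimensional approximation level. Since $W$ acts on $X_T$ over $X_G$ (via right translation on the $T$-coset, using $W = N_G(T)/T$), one gets an induced $W$-action on $\Omega^*_T(X) = \varprojlim_j \Omega^*(X_T)$, and the restriction map $r^G_T$ of \eqref{eqn:Res} lands in the invariants $\left(\Omega^*_T(X)\right)^W$. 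The content is that this is an isomorphism onto the invariants.

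The key input I would use is the projective-bundle-type description of $\Omega^*(G/T)$ and, more relevantly, the relative version: for the fibration $\pi_j : X_T \to X_G$ the cobordism $\Omega^*(X_T)$ is a free $\Omega^*(X_G)$-module of rank $|W|$, with a basis given by products of Chern classes of the line bundles $L_\chi$ associated to characters $\chi \in \widehat T$ (Schubert-type classes). This is exactly the kind of splitting principle that holds for $\Omega^*$ by the Projective bundle formula in Theorem~\ref{thm:Levine-M}, applied iteratively along a tower of $\P^1$-bundles refining $G/T \to \mathrm{pt}$ (the Bott--Samelson / cellular fibration structure). Granting this, one checks that the natural map $\Omega^*(X_G) \to \Omega^*(X_T)^W$ is an isomorphism: injectivity follows because $\pi_{j*}\pi_j^*$ is multiplication by a unit (in rational coefficients the pushforward of the fundamental class is $|W|$ up to lower-order Chern-class corrections coming from the formal group law, hence invertible after inverting $|W|$, which is automatic over $\Q$), and surjectivity follows by the transfer: given a $W$-invariant class $y \in \Omega^*(X_T)$, the element $\tfrac{1}{|W|}\pi_{j*}(y)$ pulls back to $y$, using the projection formula (Theorem~\ref{thm:Basic}(vi)) and the fact that $\pi_j^*\pi_{j*}$ can be computed on the free rank-$|W|$ module as the ``sum over $W$'' operator, which is the identity on invariants after the normalization. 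Passing to the inverse limit over $j$ — which is harmless since the transition maps are compatible and the filtration quotients stabilize in each degree — yields $\Omega^*_G(X) \xrightarrow{\cong} \left(\Omega^*_T(X)\right)^W$. Specializing to $X = \Spec(k)$ gives $C(G) \cong C(T)^W$.

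The main obstacle will be establishing that $\pi_j^*\pi_{j*}$ acts as the averaging-type operator on the free module, i.e. that the composite $r^G_T$ followed by transfer is, up to the unit $|W|$, a projector onto the $W$-invariants. Unlike ordinary cohomology, in cobordism the pushforward $\pi_{j*}$ mixes degrees via the coefficients $a_{ij}$ of the formal group law $F_{\bL}$, so one cannot simply quote the Borel description verbatim; one has to argue that the ``leading term'' of $\pi_{j*}\pi_j^*$ is $|W|\cdot \mathrm{id}$ and that the lower-order corrections, being nilpotent in a suitable filtration (the topological/codimension filtration on $\Omega^*$), do not affect invertibility over $\Q$. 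This is precisely where the rational coefficients and the structure of the Lazard ring ($\bL$ non-positively graded, $\bL_0 = \Q$) are used. The remaining bookkeeping — compatibility of the $W$-action with the transition maps between good pairs, and exactness of $\varprojlim_j$ in this setting — is routine given the definitions in \eqref{eqn:E-cob*}--\eqref{eqn:ECob-D} and the cited results of \cite{Krishna3}.
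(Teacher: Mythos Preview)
The paper does not contain a proof of this theorem: it is quoted verbatim from \cite[Theorem~6.6]{Krishna3} and used as a black box (note the ``{\sl cf.}'' in the heading). There is therefore no proof here to compare your proposal against.

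That said, a remark on your sketch. The overall architecture --- reduce to the finite-level fibration $X_T \to X_G$ with fibre $G/T \cong G/B$, exhibit $\Omega^*(X_T)$ as free of rank $|W|$ over $\Omega^*(X_G)$ via an iterated $\P^1$-bundle / Bott--Samelson tower, and then average over $W$ using rational coefficients --- is the standard route and is essentially what is carried out in \cite{Krishna3}. However, the step you flag as ``the main obstacle'' is a genuine one and is not dispatched by the hand-wave about leading terms and nilpotent corrections. In cobordism the identity $\pi^*\pi_* = \sum_{w \in W} w^*$ simply fails on the nose (the formal group law produces nontrivial lower-order terms), so one cannot directly conclude that $\tfrac{1}{|W|}\pi_*$ is a section of $\pi^*$ on $W$-invariants. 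What actually works is to first establish the freeness statement (which gives injectivity of $\pi^*$ and a splitting $\Omega^*(X_T) \cong \Omega^*(X_G) \otimes_{\bL} \Omega^*(G/B)$), and then to argue that the $W$-invariants of the fibre $\Omega^*(G/B) \cong \Lambda_{\bL}$ are exactly $\bL$; this last point is purely algebraic (it reduces to $\Lambda^W = \Q$, a classical fact about coinvariant algebras) and bypasses the need to identify $\pi^*\pi_*$ precisely. Your transfer argument, as written, does not close this gap.
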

If $T$ is a split torus of rank $r$ and if $\{\chi_1, \cdots , \chi_r\}$ is a
$\Q$-basis of $\wh{T}$ with associated line bundles $\{L_{\chi_1}, \cdots ,
L_{\chi_r}\}$, then there is a natural ring isomorphism
\[
\bL[[x_1, \cdots , x_r]] \xrightarrow{\cong} C(T)
\]
which maps $x_i$ to the class of the first Chern class $c_1(L_{\chi_i})$
in $\Omega^*(BT)$. Similarly, there is an isomorphism
$\bL[[\gamma_1, \cdots , \gamma_n]] \xrightarrow{\cong} C(GL_n)$, where
the image of $\gamma_i$ is the $i$th Chern class of the canonical rank $n$
vector bundle on $BGL_n$. Under the isomorphism $C(GL_n) \cong
C(T)^W$ of Theorem~\ref{thm:W-inv}, the image of $\gamma_i$ is the $i$th
elementary symmetric polynomial in the variables of $C(T)$.
  
We also recall here that there is a similar relation between the
Chow rings of $BG$ and $BT$ ({\sl cf.} \cite{Totaro}, \cite{EG}), that is, 
\begin{equation}\label{eqn:Res*}
S(G) = \ch^*(BG) \xrightarrow{\cong} \ch^*(BT)^W = S(T)^W
\end{equation} 
and moreover
\[
S(GL_n) \cong \Q[\gamma_1, \cdots , \gamma_n] \inj
\Q[x_1, \cdots , x_n] = S(T). 
\]

\section{The homomorphism $\lambda_X$}\label{section:prelim}
In this section, we explain the homomorphism $\lambda_X$ of
Theorem~\ref{thm:Cob-PR} and then prove some functoriality properties of 
this map with respect to the maps between schemes. We consider the case
of flag bundles associated to Borel subgroups of $G$, from which the
general case can easily be deduced ({\sl cf.} Section~\ref{section:Parabolic}).

Let $X$ be a scheme and let $p : E \to X$ be a principal $G$-bundle and let
$\pi : E/B \to X$ be the flag bundle associated to the Borel subgroup $B$.
Since $E$ is a $G$-scheme where $G$ acts freely, it follows from
Theorem~\ref{thm:Basic} that $\Omega^*(X) \cong \Omega^*_G(E)$ is a
$C(G)$-modules. In the same way, $\Omega^*(E/T) \cong \Omega^*_T(E)$ is
a $C(T)$-module (and hence a $C(G)$-module by restriction).
On the other hand, $E/T \to E/B$ is a principal $B^u$-bundle, where $B^u$ is 
the unipotent radical of $B$. By \cite[XXII, 5.9.5]{DG}, $B^u$ has a finite 
filtration by normal subgroups whose successive quotients are the vector 
groups. A successive application of homotopy invariance now implies that 
$\Omega^*(E/B) \xrightarrow{\cong} \Omega^*(E/T)$. Hence, $\Omega^*(E/B)$
is a $C(T)$-module. Thus, $\Omega^*(X)$ and $\Omega^*(E/B)$ are naturally
$C(G)$ and $C(T)$-modules respectively, which defines the map $\lambda_X$
as $\lambda_X(w \otimes x) = x \cdot \pi^*(w)$. 

Recall that $C(T)$ is the power series over $\bL$
in the first Chern classes of the line bundles associated to the characters 
of $T$ and Theorem~\ref{thm:W-inv} implies that $C(G)$ is also generated
by the first Chern classes of the $W$-invariant characters inside $C(T)$.

Using the description of these module structures in ~\eqref{eqn:Module-S},
we see that the map $\lambda_X$ is given by
\begin{equation}\label{eqn:CM1*}
\lambda_X : \Omega^*(X) \otimes_{C(G)} C(T) \to \Omega^*(E/B)
\end{equation}
\[
w \otimes c_1(\chi) \mapsto c_1(\chi) \cap {\pi}^*(w)
\]
for a character $\chi$ of $T$. 
It is easy to see from this that this is an $\bL$-algebra homomorphism
if $X$ is smooth.

\begin{remk}\label{remk:stack-way}
For readers who are little bit familiar with the language of quotient stacks
and know that the $G$-equivariant line bundles on a $G$-scheme $X$ are same as
ordinary line bundles on the quotient stack $[X/G]$, we can explain the above
in this set up as follows. The principal $G$-bundle $E \to X$ uniquely gives
rise to the following commutative diagram of morphisms.
\begin{equation}\label{eqn:stack-way1}
\xymatrix@C.6pc{
E/B \ar[r]^{\pi} \ar[d]_{q} & X \ar[d]^{\ov{p}} \\
BT \ar[r] & BG,}
\end{equation}
where $BG$ is the quotient stack $[k/G]$. The map $\lambda_X$ is then given
as $\lambda_X\left(w \otimes c_1(\chi)\right) = c_1(q^*(L_\chi)) \cap 
\pi^*(w)$.

In particular, if $E = G \times X \to X$ is a trivial principal bundle,
then the map $X \to BG$ canonically factors through the structure
map $X \to {\rm Spec}(k) \to [k/G] = BG$. Hence, the map $\lambda_X$
in this case is given by 
\begin{equation}\label{eqn:trivial}
\Omega^*(X) \otimes_{\bL} \left(\bL \otimes_{C(G)} C(T)\right) \to 
\Omega^*(E/B).
\end{equation}
Here, the left term is identified as $\Omega^*(X) \otimes_{\bL} \Omega^*(G/B)$
by \cite[Theorem~7.6]{Krishna3} and $\Omega^*(X) \otimes_{\bL} \Omega^*(G/B) 
\xrightarrow{\lambda_X} \Omega^*(E/B)$ is simply the exterior product map. 
\end{remk}
To prove certain functoriality properties of $\lambda_X$, we need the
following elementary result on the equivariant cobordism.
\begin{lem}\label{lem:simple}
Let $G$ be a linear algebraic group over $k$ and let 
$f: Y \to X$ and $g : Z \to X$ be projective and smooth morphisms of
$G$-schemes respectively. Then the maps $f_* : \Omega^*_G(Y) \to \Omega^*_G(X)$
and $g^*:\Omega^*_G(X) \to \Omega^*_G(Z)$ are $C(G)$-linear.
\end{lem}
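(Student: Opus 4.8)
The plan is to reduce the $C(G)$-linearity of $f_*$ and $g^*$ to the projection formula and the compatibility of Chern class operators with the relevant pullbacks, both of which are already available from Theorem~\ref{thm:Basic}. First recall, from the discussion around \eqref{eqn:Module-S}, that the $C(G)$-module structure on $\Omega^*_G(W)$ for any $G$-scheme $W$ is given explicitly: an element $c^G_1(L_\chi) \in C(G)$ (for a character $\chi$ of $G$, these together with the Lazard generators topologically generate $C(G)$) acts on $w \in \Omega^*_G(W)$ by $w \mapsto c^G_1(p_W^*(L_\chi)) \cap w$, where $p_W : W \to \Spec(k)$ is the structure map and $c^G_1(-)\cap -$ is the equivariant first Chern class operator of Theorem~\ref{thm:Basic}(iii). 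Since the module structure is $\bL$-bilinear and $C(G)$ is topologically generated over $\bL$ by such classes $c^G_1(L_\chi)$, it suffices to check linearity with respect to multiplication by each $c^G_1(L_\chi)$.

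Now I would argue separately for the two maps. For the smooth morphism $g : Z \to X$, the structure maps satisfy $p_Z = p_X \circ g$, so $g^*(p_X^*(L_\chi)) = p_Z^*(L_\chi)$ as $G$-equivariant line bundles on $Z$. By the functoriality of equivariant Chern class operators with respect to smooth pullback (Theorem~\ref{thm:Basic}(iii), which grants the ``same functoriality properties as in the non-equivariant case''), we get $g^*(c^G_1(p_X^*(L_\chi)) \cap w) = c^G_1(g^*p_X^*(L_\chi)) \cap g^*(w) = c^G_1(p_Z^*(L_\chi)) \cap g^*(w)$, which is exactly the statement that $g^*$ commutes with the action of $c^G_1(L_\chi)$. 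Hence $g^*$ is $C(G)$-linear.

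For the projective morphism $f : Y \to X$, I would invoke the projection formula, Theorem~\ref{thm:Basic}(vi): for $y \in \Omega^*_G(Y)$ and a class $x \in \Omega^*_G(X)$ one has $f_*(y \cdot f^*(x)) = f_*(y) \cdot x$. Taking $x = c^G_1(L_\chi)$ (viewed in $C(G) = \Omega^*_G(\Spec k)$, pulled back to $X$) and using that $f^*$ followed by the cap action recovers the action of $c^G_1(L_\chi)$ on $\Omega^*_G(Y)$ via $p_Y = p_X \circ f$ — that is, $c^G_1(p_Y^*(L_\chi)) \cap y = c^G_1(f^*p_X^*(L_\chi)) \cap y$, which by the projection formula for a single Chern class operator equals the pullback-then-cap expression pushing forward to $c^G_1(p_X^*(L_\chi)) \cap f_*(y)$ — we conclude $f_*(c^G_1(L_\chi) \cdot y) = c^G_1(L_\chi) \cdot f_*(y)$. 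So $f_*$ is $C(G)$-linear as well.

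The only genuinely delicate point is bookkeeping at the level of the approximating mixed spaces $X \stackrel{G}{\times} U_j$: all of the above identities (projection formula, Chern class functoriality) are asserted in Theorem~\ref{thm:Basic} for equivariant cobordism, but to be safe one should note they are inherited from the corresponding non-equivariant statements on the mixed quotients $Y_G, X_G, Z_G$ and pass to the limit over $j$, which is harmless since the Niveau-filtration quotients defining $\Omega^G_*$ are compatible with pushforward, smooth pullback, and Chern class operators. I expect this compatibility-with-$j$ check to be the main (very mild) obstacle; everything else is a direct application of the listed properties. Finally, since both $f_*$ and $g^*$ commute with the action of a topological generating set of $C(G)$ over $\bL$ and are visibly $\bL$-linear, they are $C(G)$-linear, completing the proof.
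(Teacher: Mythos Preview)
Your reduction to first Chern classes of characters of $G$ is the weak link. The claim that the elements $c^G_1(L_\chi)$, for $\chi$ a character of $G$, topologically generate $C(G)$ over $\bL$ is false for most groups: if $G$ is semisimple (say $G=\SL_n$) the character group of $G$ is trivial, so your generating set reduces to $\bL$ alone, while $C(G)={C(T)}^W$ is strictly larger. The sentence just before~\eqref{eqn:CM1*} that you are leaning on is loosely worded; it should be read as ``$C(G)$ consists of $W$-invariant power series in the $c_1$'s of characters of $T$,'' not that $C(G)$ is generated by $c_1$'s of characters of $G$. Once this reduction fails, the subsequent appeals to Chern-class functoriality and to the projection formula~\ref{thm:Basic}(vi) no longer cover all of $C(G)$; and note also that~(vi) is the \emph{internal} projection formula, involving $f^*(x)$ for $x\in\Omega^*_G(X)$, whereas the $C(G)$-action is by exterior product and there is no pullback $p_X^*:C(G)\to\Omega^*_G(X)$ available when $X$ is singular.

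The paper sidesteps both issues by arguing directly with the exterior product that defines the $C(G)$-module structure (\thmref{thm:Basic}(v)): on each finite approximation one has ordinary cobordism classes $[W_1\to U_j/G]$ and $[W_2\to Y\times^G U_j]$, and one checks at the cycle level that $f_*(x\cdot w)=x\cdot f_*(w)$ as classes on $X\times^G(U_j\times U_j)$, using only that $(V_j\times V_j,U_j\times U_j)$ is again a good pair. No generating set for $C(G)$ and no smoothness hypothesis on $X$ is needed. Your argument can be repaired in the same spirit, or alternatively by first restricting along $r^G_T$ (which is injective by \thmref{thm:W-inv}) and then running your Chern-class argument with characters of $T$, where the generation claim \emph{is} valid.
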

\begin{proof}
We only give a sketch for the $C(G)$-linearity of $f_*$. 
The assertion about $g^*$ is
similar and much simpler. Note that the $C(G)$-module structure on
$\Omega^*_G(X)$ is given by the exterior product ({\sl cf.} 
Theorem~\ref{thm:Basic}). It suffices to show that 
\[
f_*(x \cdot w) = x \cdot f_*(w)
\]
when $x$ and $w$ are generators of the corresponding cobordism groups
$\Omega^*({U_j}/G)$ and $\Omega^*((Y \times U_j)/G)$, where $(V_j, U_j)$
is any given good pair for $G$-action. So let $W_1 \xrightarrow{s_1} {U_j}/G$
and $W_2 \xrightarrow{s_2} Y \stackrel{G}{\times} U_j$ be projective
morphisms from smooth and connected schemes, representing the cobordism classes
$x$ and $w$ respectively. Let $\wt{W_1}$ and $\wt{W_2}$
be the pull-backs of $W_1$ and $W_2$ to $U_j$ and $Y \times U_j$ respectively.
By the definition of the push-forward and exterior product, we have
\[
\begin{array}{lll}
f_*(x \cdot w) & = &  f_*\left(\left[\wt{W_1} \stackrel{G}{\times} \wt{W_2} 
\to Y \stackrel{G}{\times}(U_j \times U_j)\right]\right) \\
& = & \left[\wt{W_1} \stackrel{G}{\times} \wt{W_2} \to
X \stackrel{G}{\times}(U_j \times U_j)\right]
\end{array}
\] 
and the last term is same as the class of $x \cdot f_*(w)$ in $\Omega^*(X_G)$
which can be taken as $X \stackrel{G}{\times}(U_j \times U_j)$ because
$(V_j \times V_j, U_j \times U_j)$ is also a good pair for the $G$-action.
\end{proof} 
\begin{lem}\label{lem:Funct}
Let $f: Y \to X$ and $g : Z \to X$ be respectively, the projective and the 
smooth morphisms. Let $p : E \to X$ be a principal $G$-bundle and let 
$E_Y$ and $E_Z$ denote its pull-backs to $Y$ and $Z$ respectively. 
Consider the following Cartesian diagrams of flag bundles.
\begin{equation}\label{eqn:Funct1}
\xymatrix@C.6pc{
{E_Y}/B \ar[r]^{\ov{f}} \ar[d]_{\pi_Y} & E/B \ar[d]^{\pi} & & {E_Z}/B 
\ar[r]^{\ov{g}} \ar[d]_{\pi_Z} & E/B \ar[d]^{\pi} \\
Y \ar[r]_{f} & X & & Z \ar[r]_{g} & X.}
\end{equation} 
Then the diagrams
\begin{equation}\label{eqn:Funct2}
\xymatrix{
\Omega^*(Y) \otimes_{C(G)} C(T) \ar[r]^{\ \ \ \ \lambda_Y} 
\ar[d]_{f_* \otimes {\rm id}} &  
\Omega^*({E_Y}/B) \ar[d]^{{\ov{f}}_*} \\
\Omega^*(X) \otimes_{C(G)} C(T) \ar[r]_{\ \ \ \ \lambda_X} & \Omega^*(E/B)}
\end{equation}
\begin{equation}\label{eqn:Funct3}
\xymatrix{
\Omega^*(X) \otimes_{C(G)} C(T) \ar[r]^{\ \ \ \ \lambda_X} 
\ar[d]_{g^* \otimes {\rm id}} &  
\Omega^*({E}/B) \ar[d]^{{\ov{g}}^*} \\
\Omega^*(Z) \otimes_{C(G)} C(T) \ar[r]_{\ \ \ \ \lambda_Z} & \Omega^*({E_Z}/B)}
\end{equation}
are commutative.
\end{lem}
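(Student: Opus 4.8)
The plan is to verify both squares on generators of the tensor product and to reduce everything to standard compatibilities of push-forward, pull-back and Chern classes. Recall from~\eqref{eqn:CM1*} and Remark~\ref{remk:stack-way} that $\lambda_X$ is $C(T)$-linear and sends $w\otimes c_1(\chi)$ to $c_1(q^*(L_\chi))\cap\pi^*(w)$, where $q:E/B\to BT$ is the classifying map of the flag bundle and $L_\chi$ the line bundle attached to a character $\chi$ of $T$; in particular $\lambda_X(w\otimes 1)=\pi^*(w)$. Before comparing, I would note that both composites in~\eqref{eqn:Funct2} are well-defined and $C(T)$-linear: on the left edge, $f_*\otimes\id$ is the identity on the $C(T)$-factor and $f_*$ is $C(G)$-linear by Lemma~\ref{lem:simple}; on the right edge, $\ov{f}_*$ is $C(T)$-linear because the classifying maps satisfy $q_Y=q\circ\ov{f}$ (see below) and hence, by the projection formula, $\ov{f}_*(c_1(q_Y^*L_\chi)\cap a)=c_1(q^*L_\chi)\cap\ov{f}_*(a)$. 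The same applies to~\eqref{eqn:Funct3}. It therefore suffices to check commutativity on classes $w\otimes 1$, or equivalently on the $w\otimes c_1(\chi)$.

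The geometric facts I would use are: forming the $B$-quotient commutes with the base changes along $f$ and $g$, so that $E_Y/B=(E/B)\times_X Y$ and $E_Z/B=(E/B)\times_X Z$ and the squares of~\eqref{eqn:Funct1} are genuinely Cartesian; the classifying maps then factor as $q_Y=q\circ\ov{f}$ and $q_Z=q\circ\ov{g}$, so the tautological line bundles are natural, $q_Y^*(L_\chi)=\ov{f}^*(q^*(L_\chi))$ and $q_Z^*(L_\chi)=\ov{g}^*(q^*(L_\chi))$; and $\pi$, being a flag bundle, is smooth (and projective), so that the left square of~\eqref{eqn:Funct1} gives the base-change identity $\pi^*\circ f_*=\ov{f}_*\circ\pi_Y^*$.

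Granting this, the projective square~\eqref{eqn:Funct2} follows from the chain
\[
\ov{f}_*\bigl(\lambda_Y(w\otimes c_1(\chi))\bigr)=\ov{f}_*\bigl(c_1(\ov{f}^*q^*(L_\chi))\cap\pi_Y^*(w)\bigr)=c_1(q^*(L_\chi))\cap\ov{f}_*\pi_Y^*(w)=c_1(q^*(L_\chi))\cap\pi^*f_*(w),
\]
which is $\lambda_X(f_*(w)\otimes c_1(\chi))$; here the steps use, in order, naturality of $L_\chi$, the projection formula for $c_1(q^*(L_\chi))\cap-$, and base change. The smooth square~\eqref{eqn:Funct3} is easier, as no push-forward intervenes: applying $\ov{g}^*$ to $c_1(q^*(L_\chi))\cap\pi^*(w)$ and using compatibility of smooth pull-back with Chern class operators together with $q_Z=q\circ\ov{g}$ and $\pi\circ\ov{g}=g\circ\pi_Z$ gives $c_1(q_Z^*(L_\chi))\cap\pi_Z^*(g^*(w))=\lambda_Z(g^*(w)\otimes c_1(\chi))$. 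Finally, since $C(T)$ is topologically generated over $\bL$ by the Chern classes $c_1(\chi)$ and every map in sight is $C(T)$-linear, the equalities on these generators suffice.

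I expect the one genuinely non-formal ingredient to be the base-change identity $\pi^*f_*=\ov{f}_*\pi_Y^*$ in the projective case: it is the compatibility of projective push-forward with smooth pull-back in a transverse Cartesian square, which does hold in the Levine--Morel theory because $\pi$ (hence $\pi_Y$) is smooth, but pinning down its validity at this level of generality is the point to be careful about; if one wishes to avoid it, $\pi$ can be written as a composite of projective-space bundles and one reduces to the projective bundle formula of Theorem~\ref{thm:Levine-M}. The remaining inputs --- $C(G)$-linearity of $f_*$ (Lemma~\ref{lem:simple}), the projection formula, naturality of $L_\chi$ under base change, and functoriality of smooth pull-back --- are immediate.
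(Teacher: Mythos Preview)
Your argument is correct and follows essentially the same route as the paper's proof: both reduce the projective square to the $C(T)$-linearity of $\ov{f}_*$ (which you unpack via the projection formula for $c_1$, while the paper invokes Lemma~\ref{lem:simple} directly) together with the base-change identity $\pi^*f_*=\ov{f}_*\pi_Y^*$ for the Cartesian square with $\pi$ smooth and $f$ projective. Your discussion is simply more explicit about the ingredients, and your caution about the base-change step is well placed but, as you note, it is available in the Levine--Morel theory.
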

\begin{proof} To show the commutativity of the first square, we
have
\[
\begin{array}{lll}
\ov{f}_* \circ \lambda_Y(w \otimes x) & = & \ov{f}_*(x \cdot \pi^*_Y(w)) \\ 
& = & x \cdot \ov{f}_*(\pi^*_Y(w)) \hspace*{1cm} 
({\rm By \ Lemma~\ref{lem:simple}}) \\
& = & x \cdot \pi^*(f_*(w)) \\
& = & \lambda_X(f_*(w) \otimes x),
\end{array}
\]
where the third equality follows from the fact that the first square in
~\eqref{eqn:Funct1} is Cartesian with $\pi$ smooth and $f$ projective.
The proof of the commutativity of the second square is similar.
\end{proof}

Let $(G, T, B, W)$ be as above where $G$ is a connected linear algebraic 
group. Let $G^u$ denote the unipotent radical of $G$ and let $L$ denote
the corresponding quotient as a reductive group. Then any principal $G$-bundle
$E \to X$ canonically gives a principal $L$-bundle $E_L = E/{G^u} \to X$.
Moreover, as the Borel subgroup $B$ contains $G^u$, we see that
$E/B \xrightarrow{\cong} {E_L}/{B_L}$, where $B_L$ is the image of $B$ which
is a Borel subgroup of the reductive group $L$. Since $C(G) \cong C(L)$,
as follows from the Levi decomposition and the homotopy invariance, we
conclude that it is enough to consider the case when $G$ is reductive
in order to prove our main results. Hence for the rest of this paper, $G$
will always denote a connected reductive group. 
We shall deduce Theorem~\ref{thm:Cob-PR} from the following result
for the algebraic cobordism of the flag bundles associated to the Borel
subgroup $B$.
\begin{thm}\label{thm:Cob-Main}
Let $p : E \to X$ be a principal $G$-bundle and let $\pi :E/B \to X$ be
the flag bundle associated to the Borel subgroup $B$.
The natural map of $C(T)$-modules
\begin{equation}\label{eqn:CM1}
\lambda_X : \Omega^*(X) \otimes_{C(G)} C(T) \to \Omega^*(E/B)
\end{equation}
is an isomorphism. Moreover, it is an isomorphism of rings if $X$ is smooth.
\end{thm}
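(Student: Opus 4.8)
The plan is to follow the structure announced in the introduction: reduce to the case of a trivial bundle over a smooth scheme, then handle general smooth $X$, then arbitrary $X$, with surjectivity separated from injectivity. First I would prove the theorem when $X$ is smooth and $E = G \times X$ is the trivial bundle. By Remark~\ref{remk:stack-way} (see~\eqref{eqn:trivial}), in that case $\lambda_X$ is the exterior product map $\Omega^*(X) \otimes_{\bL} \Omega^*(G/B) \to \Omega^*(E/B)$, and $E/B = X \times G/B$. So what is really needed is the algebraic statement $\bL \otimes_{C(G)} C(T) \xrightarrow{\cong} \Omega^*(G/B)$ together with the fact that exterior product with a cellular-type variety like $G/B$ is an isomorphism. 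The first is essentially the content of Section~\ref{section:ALG-R} and uses Theorem~\ref{thm:W-inv} ($C(G) = C(T)^W$) plus the structure of $C(T) \cong \bL[[x_1,\dots,x_r]]$; the fact that $C(T)^W$ is such that $C(T)$ is free of rank $|W|/|W_B| = |W|$ over it is the cobordism analogue of the classical ``coinvariant algebra'' computation, and this is where I expect to lean on the results quoted from \cite{Krishna3} and the algebraic lemmas of Section~\ref{section:ALG-R}. The exterior-product isomorphism for $X \times G/B$ should follow from the projective bundle formula applied iteratively to the tower of projective bundles realizing $G/B$ (or to the Bott tower), since each stage is a $\P^{n_i}$-bundle.

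Next I would prove surjectivity of $\lambda_X$ for an arbitrary principal bundle over an arbitrary $X$, as in Section~\ref{section:Surj}. The idea is Noetherian induction on $\dim X$ using the localization sequence of Theorem~\ref{thm:Levine-M}. Choose a dense open $U \subset X$ over which $E$ becomes (after possibly shrinking) nice enough that the trivial-bundle case or a smooth-base argument applies — more precisely, over a suitable open $U$ we can arrange $U$ smooth and the bundle to be handled by the smooth case — giving surjectivity of $\lambda_U$. Let $Z = X \setminus U$ with its reduced structure; by induction $\lambda_Z$ is surjective. Since $\Omega^*$ has functorial pushforward along the closed immersion and restriction to the open, and since $\lambda$ is compatible with both by Lemma~\ref{lem:Funct}, a diagram chase on the two localization sequences (for $E/B$ restricted over $Z$, over $X$, over $U$) — both of which are right-exact — forces $\lambda_X$ to be surjective. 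The key point making this work is that $C(T)$ is flat (indeed free) over $C(G)$, so tensoring the localization sequence for $X$ with $C(T)$ over $C(G)$ keeps it exact; this flatness is exactly what the algebraic results of Section~\ref{section:ALG-R} provide.

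For injectivity — which is the heart of the matter and the step I expect to be the main obstacle — I would adapt Vistoli's argument \cite{Vistoli}, as indicated for Section~\ref{section:ProofI}. The difficulty is the absence of higher cobordism groups, so one cannot simply quote a long exact sequence; instead one argues as follows. First prove injectivity for $X$ smooth: here both sides are finitely generated modules with a good grading, the source is free over $C(T)$ of known rank $|W|$ (using the smooth trivial-bundle case plus a Mayer–Vietoris/localization patching to pass from trivial to general $E$ over smooth $X$), and one shows $\lambda_X$ is split injective by constructing a retraction via Chern classes of the tautological line bundles on $E/B$ together with a ``Gysin/degree'' computation along the fibers $G/B$ — concretely, pushing forward suitable monomials in the $c_1(L_{\chi_i})$ along $\pi$ realizes a projection onto the correct summands, exactly as the fiber-integration argument does in topology and in \cite{Vistoli}. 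Then for general $X$, take a filtration $X = X_0 \supset X_1 \supset \cdots$ by closed subschemes with $X_i \setminus X_{i+1}$ smooth, and run an induction using the localization sequences: the subtle point is that injectivity does not propagate automatically through a right-exact sequence, so one must show that the relevant boundary-type obstruction vanishes, which is done by comparing with the surjectivity already established and a dimension/niveau-filtration count — this is precisely the place where Vistoli's technique of working with the bivariant/operational structure gets replaced by a more hands-on argument with cobordism cycles. Finally, once $\lambda_X$ is a bijective $C(T)$-module map and $X$ is smooth, it is a ring map because on generators it sends $w \otimes c_1(\chi)$ to $c_1(q^*L_\chi) \cap \pi^*(w)$ and both $\pi^*$ and cup product with Chern classes are multiplicative, so it respects products on a generating set and hence everywhere.
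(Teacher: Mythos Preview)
Your overall architecture matches the paper's (trivial bundle first, surjectivity by localization induction, injectivity via a Vistoli-style fiber-integration), and your surjectivity sketch is essentially the paper's Proposition~\ref{prop:Surj}. The real divergence is in how injectivity is handled, and here your proposal has a genuine gap.

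You propose to prove injectivity first for smooth $X$ with arbitrary $E$ (via ``Mayer--Vietoris/localization patching'' from the trivial case) and then for general $X$ by filtering with smooth strata. Both steps run into the same obstruction you yourself flag: injectivity does not propagate through a merely right-exact localization sequence, and your proposed cure (``comparing with surjectivity and a dimension/niveau count'') is not a mechanism. The paper avoids this entirely by two ideas you are missing. First, it works not with $\lambda_X$ but with the self-map $\psi_X:\Omega^*(X)\to\Omega^*(X)$, $x\mapsto\pi_*\bigl(c_1(p_N)\cap\pi^*(x)\bigr)$; on each stratum of a suitable filtration $\psi$ is the \emph{identity} (not merely injective), and ``being the identity on the outer terms of a commutative ladder of short exact sequences'' does force the middle map to be an isomorphism. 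This yields $\psi_X$ an isomorphism, hence $\pi^*$ injective, for \emph{all} $X$ (Proposition~\ref{prop:MAIN*}, Corollary~\ref{cor:MAIN-INJ}), and also drives the proof of the trivial-bundle case over an arbitrary (not just smooth) base (Proposition~\ref{prop:Trivial-ISO}). Second, the reduction from a general bundle to the trivial one is not by patching over opens but by the self-trivialization of Lemma~\ref{lem:trivial-flag}: the fiber product $E/B\times_X E/B$ is the \emph{trivial} flag bundle $G/B\times E/B\to E/B$. One then stares at the square~\eqref{eqn:CBM1}: the bottom $\phi_{E/B}$ is an isomorphism by the trivial case, the left $\pi^*\otimes\mathrm{id}$ is injective because $\pi^*$ is injective and $S_\bL$ is flat over $S_\bL^W$, and injectivity of $\phi_X$ (hence of $\lambda_X$) follows. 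Your retraction/Gysin intuition is exactly the map $\psi_X$, but you only deploy it for smooth $X$ and then try to bootstrap $\lambda_X$-injectivity by filtration; the paper instead bootstraps $\psi_X$-is-identity by filtration (which works) and uses the self-trivialization to finish.
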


\section{Some algebraic reductions}\label{section:ALG-R}
Let $(G, T, B, W)$ be as above and let $T$ be a split torus of rank $r$. This
rank will be fixed throughout. We fix a basis $\{\chi_1, \cdots , \chi_r\}$
of $\wh{T}$ and let $S = \Sym(\wh{T}) = \Q[x_1, \cdots , x_r]$ be the 
polynomial algebra in the first Chern classes of the line bundles associated 
to the characters $\{\chi_i, \cdots , \chi_r\}$. Let $S^W \subset S$ be
the subalgebra generated by the homogeneous polynomials which are invariant
under the action of $W$. This gives us a square of ring inclusions
\begin{equation}\label{eqn:PPS}
\xymatrix@C.7pc{
(\bL[x_1, \cdots , x_r])^W \ar[r] \ar[d] & \bL[x_1, \cdots , x_r] \ar[d] \\
C(G) \ar[r] & C(T),}  
\end{equation}
which is Cartesian and where $C(G)$ has been identified with $C(T)^W$. 
We shall write $\bL[x_1, \cdots , x_r]$ simply as $S_{\bL}$.
Note that $S_{\bL}$ and $S_{\bL}^W$ are canonically isomorphic to
$\bL \otimes_{\Q} S$ and $\bL \otimes_{\Q} S^W$ as $\bL$-algebras.
It is also known that $S^W_{\bL}$ is a polynomial algebra over $\bL$
of rank $r$. We shall denote the homogeneous generators of this subalgebra
by $\{\sigma_1, \cdots , \sigma_r\}$.

Let $I$ be the ideal of $S$ generated by the homogeneous elements of
positive degree which are invariant under $W$ and let $\Lambda$ denote the
ring $S/I$. Then we see that $\Lambda_{\bL} = \bL \otimes_{\Q} \Lambda$
is canonically isomorphic to the $\bL$-algebra $\bL[x_1, \cdots , x_r]/I$.
We recall the following result from \cite[Lemma~1.2]{Vistoli}.
\begin{lem}\label{lem:VIS}
The graded $\Q$-algebra $\Lambda$ is finite. If $N$ is the maximal integer
for which $\Lambda_N \neq 0$, then $N = {\rm dim}(G/B)$. Moreover, the
$\Q$-vector space $\Lambda_N$ is one-dimensional, and if $d$ is an integer,
the homomorphism 
\[
\Lambda_d \otimes \Lambda_{N-d} \to \Lambda_N
\]
given by the multiplication in $\Lambda$ is a perfect pairing of 
finite-dimensional $\Q$-vector spaces.
\end{lem}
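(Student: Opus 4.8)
The plan is to reduce everything to classical invariant theory for the finite reflection group $W$ acting on the polynomial ring $S = \Q[x_1,\dots,x_r]$, since $\Lambda = S/I$ is the \emph{coinvariant algebra} of $W$. First I would recall the Chevalley--Shephard--Todd theorem: because $W$ is a finite group generated by reflections (it is the Weyl group of a reductive group), $S$ is a free module over $S^W$ of rank $|W|$, and in fact $S \cong S^W \otimes_\Q \Lambda$ as graded $S^W$-modules. In particular $\Lambda = S \otimes_{S^W} \Q$ is a finite-dimensional graded $\Q$-algebra of total dimension $|W|$. That $\Lambda$ is concentrated in finitely many degrees — i.e. $\Lambda_d = 0$ for $d \gg 0$ — is immediate from finite-dimensionality, so the first sentence of the lemma is essentially free once Chevalley's theorem is invoked.

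Next I would pin down the top nonvanishing degree $N$ and show $N = \dim(G/B)$. The cleanest route is to compute the Hilbert series of $\Lambda$. From $S \cong S^W \otimes_\Q \Lambda$ one gets
\[
\sum_d \dim_\Q \Lambda_d \cdot t^d \;=\; \frac{\prod_{i=1}^r (1 - t^{d_i})}{(1-t)^r} \;=\; \prod_{i=1}^r (1 + t + \cdots + t^{d_i - 1}),
\]
where $d_1,\dots,d_r$ are the degrees of the fundamental invariants $\sigma_1,\dots,\sigma_r$ (the degrees of $W$). This is a palindromic polynomial of degree $\sum_i (d_i - 1) = \sum_i d_i - r$, which by a standard identity for Weyl groups equals the number of positive roots, which is exactly $\dim(G/B)$. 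Reading off the top coefficient of this product gives $\dim_\Q \Lambda_N = 1$, establishing the one-dimensionality of $\Lambda_N$.

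The remaining and most substantive point is the perfect pairing $\Lambda_d \otimes \Lambda_{N-d} \to \Lambda_N$. The standard way to see this is that $\Lambda$ is a \emph{Poincar\'e duality algebra} (a graded Artinian Gorenstein $\Q$-algebra): the coinvariant ring of a reflection group is the cohomology ring $H^*(G/B,\Q)$ of the flag variety (doubling degrees), and that cohomology ring satisfies Poincar\'e duality since $G/B$ is a smooth projective variety of (complex) dimension $N$. Alternatively, one can argue purely algebraically: $S$ is a complete intersection, hence Gorenstein, and $\Lambda = S/(\sigma_1,\dots,\sigma_r)$ is the quotient of $S$ by a regular sequence of homogeneous elements, hence again Gorenstein and Artinian; a graded Artinian Gorenstein $\Q$-algebra with one-dimensional socle in degree $N$ automatically has the multiplication maps $\Lambda_d \otimes \Lambda_{N-d} \to \Lambda_N$ nondegenerate, which is precisely the asserted perfect pairing.

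The main obstacle is not any single deep step — each ingredient (Chevalley--Shephard--Todd, the Gorenstein/complete-intersection property, the combinatorial identity $\sum(d_i-1) = \#\{\text{positive roots}\}$) is classical — but rather assembling them cleanly and making sure the regular-sequence hypothesis is justified: one must check that $\sigma_1,\dots,\sigma_r$ form a regular sequence in $S$, which follows because $S/(\sigma_1,\dots,\sigma_r) = \Lambda$ is finite-dimensional over $\Q$ and $S$ is Cohen--Macaulay of dimension $r$, so any $r$ homogeneous elements cutting out a zero-dimensional quotient form a system of parameters and hence a regular sequence. Since the lemma is quoted from \cite{Vistoli}, in practice I would simply cite that reference and sketch the Hilbert-series computation to identify $N$ with $\dim(G/B)$, rather than reprove the full Gorenstein package.
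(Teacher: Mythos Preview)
Your proposal is correct, and in fact it goes well beyond what the paper does: the paper gives no proof at all of this lemma, simply recalling it verbatim from \cite[Lemma~1.2]{Vistoli}. Your outline via Chevalley--Shephard--Todd, the Hilbert-series identity $\sum(d_i-1)=\#\{\text{positive roots}\}=\dim(G/B)$, and the Gorenstein/Poincar\'e-duality argument for the pairing is the standard way to establish the result and is entirely sound; the regular-sequence justification you give is also correct. Since the paper treats this as a black-box citation, your final remark---that one would simply cite Vistoli and perhaps sketch the Hilbert-series step---matches exactly what the paper does (minus even the sketch).
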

\begin{lem}\label{lem:elem-comp}
Let $A$ be a commutative ring and let $I$ be an ideal of $A$. Let $J$ be a 
finitely generated ideal of $A$. Then for any $A$-module $M$, the natural
maps of $\wh{A}$-modules
\[
J\wh{M} \to \wh{JM}, \ \ \frac{\wh{M}}{J\wh{M}} \to
\wh{\left(\frac{M}{JM}\right)}
\]
are isomorphisms, where $\wh{M}$ denotes the $I$-adic completion of $M$.
\end{lem}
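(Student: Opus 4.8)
The plan is to reduce both isomorphism statements to the single flatness fact that completion along a finitely generated ideal is exact on finitely generated modules over a Noetherian ring—but since $A$ is only assumed commutative, I cannot invoke Noetherianity directly, so instead I will argue by hand using that $J$ is finitely generated. First I would fix generators $J = (a_1,\dots,a_s)$ and note that the second claim follows from the first: if $J\wh{M} \to \wh{JM}$ is an isomorphism for all $M$, then applying the right-exact functor of $I$-adic completion to the exact sequence $0 \to JM \to M \to M/JM \to 0$ and using that completion is right exact gives $\wh{M}/\wh{JM} \xrightarrow{\cong} \wh{M/JM}$, and then substituting $\wh{JM} \cong J\wh{M}$ yields the desired $\wh{M}/J\wh{M} \xrightarrow{\cong} \wh{M/JM}$. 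So the crux is the first isomorphism.

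For the first isomorphism I would use the surjection $A^{s} \twoheadrightarrow J$, $e_i \mapsto a_i$, tensored with $M$ to get $M^{s} \twoheadrightarrow JM$, hence (completion being right exact) a surjection $\wh{M}^{\,s} \twoheadrightarrow \wh{JM}$. On the other hand the same generators give a surjection $\wh{M}^{\,s} \twoheadrightarrow J\wh{M}$. The natural map $J\wh{M} \to \wh{JM}$ is compatible with these two surjections from $\wh{M}^{\,s}$, so it is automatically surjective, and it remains to check injectivity. Here I would observe that the $I$-adic completion $\wh{M}$ is the same as completion at the ideal $I\wh{A}$ (i.e. $\wh{M} = \wh{M} \otimes$ nothing subtle—$\wh{M}$ is already $I\wh{A}$-adically complete and separated), and that $\wh{A}$ is $I\wh{A}$-adically complete with $I\wh{A}$ finitely generated; the point is that a finitely generated module over such a ring which is $I$-adically complete behaves well. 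Concretely, I would compare the two short exact sequences obtained by completing $0 \to K \to A^{s} \to J \to 0$ (where $K$ is the module of relations) tensored with $M$—this is where the argument has to be a little careful, since $K$ need not be finitely generated—and instead use the Artin–Rees / Mittag-Leffler style observation that for the finitely generated module $J$ and the finitely generated ideal $I$, the filtration $I^nJ$ is cofinal with $J \cap I^n A$ only after completion, which is exactly the content being proved. To avoid circularity I would instead cite the standard statement: for a finitely generated ideal $J$ in any commutative ring $A$ and any $A$-module $M$, the canonical maps $J \otimes_A \wh{A} \to \wh{A}$ and $M \otimes_A \wh{A} \to \wh{M}$ fit together so that $J\wh{M} = \mathrm{im}(J \otimes_A \wh{M} \to \wh{M})$ equals $\mathrm{im}(\wh{JM} \to \wh{M})$, and injectivity of $J\wh{M} \to \wh{JM}$ follows because the kernel would be an $I$-adically complete module that is killed by the natural projection yet maps to zero in $\wh{M}$, and a diagram chase with the generators $a_i$ shows it must vanish.

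I expect the main obstacle to be precisely the injectivity of $J\wh{M} \to \wh{JM}$ without Noetherian hypotheses: the clean proof really does want $A$ Noetherian (so that $\wh{A}$ is flat over $A$ and completion is exact on finitely generated modules), and in the generality stated one must lean on the finite generation of $J$ together with the fact that in this paper $A$ will always be a power series or polynomial ring over $\bL$—which is Noetherian in each finite-variable truncation—so in practice I would either (a) add the remark that $A$ here is Noetherian, reducing to the textbook fact that completion at a finitely generated (equivalently, any) ideal of a Noetherian ring is exact on finitely generated modules, whence $\wh{A}$ is flat and $\wh{JM} = JM \otimes_A \wh{A} = J(M \otimes_A \wh{A}) = J\wh{M}$; or (b) if one insists on full generality, prove it via the explicit description $\wh{M} = \varprojlim_n M/I^nM$ and check that an element of $\ker(J\wh{M} \to \wh{JM})$, written as $\sum a_i m_i^{(\bullet)}$ with $m_i^{(\bullet)} \in \wh{M}$, has all its truncations in $JM \cap I^nM$, and then invoke the Artin–Rees lemma (again needing $A$ Noetherian) to conclude. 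Either way the honest resolution is that $A$ is Noetherian in all applications, so option (a) is the intended argument, and the lemma is then immediate.
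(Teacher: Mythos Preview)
Your proposal uses the right ingredients—the presentation $M^s \twoheadrightarrow JM$ via generators $a_1,\dots,a_s$ of $J$—but it misidentifies where the difficulty lies, and your fallback to Noetherianity is unavailable in the one place the lemma is actually used (Corollary~\ref{cor:Finite}): there $A = S_{\bL} = \bL[x_1,\dots,x_r]$, and the Lazard ring $\bL$ is a polynomial $\Q$-algebra in countably many variables, hence not Noetherian. So neither your option (a) nor (b) applies, and the injectivity you flag as the obstacle remains unresolved in your plan. Your repeated appeals to ``completion being right exact'' are also not correct as stated; inverse limits are left exact, not right exact, and one needs something more.

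The paper's argument sidesteps your injectivity problem entirely by working \emph{inside} $\wh M$. It first applies Matsumura's Theorem~8.1 to the short exact sequence $0 \to JM \to M \to M/JM \to 0$ (with the $I$-adic filtration on $M$, the induced filtration on $JM$, and the quotient filtration—which is again $I$-adic—on $M/JM$) to get $0 \to \wh{JM} \to \wh M \to \wh{M/JM} \to 0$ exact. This exhibits $\wh{JM}$ as a submodule of $\wh M$, so the ``natural map'' $J\wh M \to \wh{JM}$ is just the inclusion of one submodule of $\wh M$ in another, and injectivity is automatic. What remains is to show the two submodules coincide. For this the paper applies the same theorem to the right-exact sequence $M^s \xrightarrow{\phi} M \to M/JM \to 0$, where $\phi(m_1,\dots,m_s)=\sum a_i m_i$, obtaining exactness of $\wh M^{\,s} \xrightarrow{\wh\phi} \wh M \to \wh{M/JM} \to 0$. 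Since $\wh\phi$ is still given by $(m_i)\mapsto\sum a_i m_i$, its image is exactly $J\wh M$; comparing the two exact sequences gives $J\wh M = \ker(\wh M \to \wh{M/JM}) = \wh{JM}$.

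In short, the idea you are missing is to compare $J\wh M$ and $\wh{JM}$ as two submodules of $\wh M$, both identified with $\ker(\wh M \to \wh{M/JM})$, rather than chasing injectivity of an abstract comparison map. Once you see this, no Noetherian hypothesis is needed and the proof is a two-line application of the exactness of completion with respect to induced and quotient filtrations.
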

\begin{proof} Consider the exact sequence 
\begin{equation}\label{eqn:elem-comp1}
0 \to JM \to M \to \frac{M}{JM} \to 0.
\end{equation}
Since the topology on $M$ is given by the descending chain 
$ M \supset IM \supset I^2M \supset \cdots$ of submodules, it follows from
\cite[Theorem~8.1]{Matsumura} that 
\begin{equation}\label{eqn:elem-comp2}
0 \to \wh{(JM)} \to \wh{M} \to \wh{\left(\frac{M}{JM}\right)} \to 0
\end{equation}  
is exact. Thus, we only need to show the first isomorphism to prove the
lemma.

Suppose $J = \stackrel{n}{\underset{i =1}\sum}a_iA$ and define
\[
\phi : M^n \to M
\]
\[
\phi(m_1, \cdots , m_n) =  \stackrel{n}{\underset{i =1}\sum} a_im_i.
\]
This makes the sequence 
\begin{equation}\label{eqn:elem-comp3}
M^r \to M \to \frac{M}{JM} \to 0
\end{equation}
exact. It again follows from \cite[Theorem~8.1]{Matsumura} that
\begin{equation}\label{eqn:elem-comp4}
{\wh{M}}^r \xrightarrow{\wh{\phi}} \wh{M} \to 
\wh{\left(\frac{M}{JM}\right)} \to 0
\end{equation}
is exact. On the other hand, $\wh{\phi}$ is again given by 
$\wh{\phi}(\wh{m_1}, \cdots , \wh{m_n}) =
 \stackrel{n}{\underset{i =1}\sum} a_i\wh{m_i}$. In other words,
${\rm Image}(\wh{\phi}) = J\wh{M}$. The first isomorphism now follows from this
and ~\eqref{eqn:elem-comp2}. This proves the lemma.
\end{proof}  
\begin{cor}\label{cor:Finite}
The natural homomorphisms of rings
\[
\bL \otimes \Lambda \xrightarrow{\cong}
\frac{\bL[x_1, \cdots , x_r]}{I} \to \frac{C(T)}{IC(T)} \to C(T) \otimes_{C(G)}
\bL
\]
are isomorphisms.
\end{cor}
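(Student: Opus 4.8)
The plan is to identify each of the four rings in the chain with a completion and then quote Lemma \ref{lem:elem-comp}. Recall that $C(T) = \bL[[x_1,\dots,x_r]]$ is the $\fm$-adic completion of $S_{\bL} = \bL[x_1,\dots,x_r]$, where $\fm = (x_1,\dots,x_r)$ is the ideal generated by the $x_i$; indeed the isomorphism $\bL[[x_1,\dots,x_r]] \xrightarrow{\cong} C(T)$ was stated in the excerpt. Likewise $C(G) = C(T)^W = S_{\bL}^W{}^{\wedge}$ is the completion of $S_{\bL}^W = \bL[\sigma_1,\dots,\sigma_r]$ with respect to the ideal $(\sigma_1,\dots,\sigma_r)$, since each $\sigma_i$ is homogeneous of positive degree in the $x_j$'s and so the $(\sigma_i)$-adic and $\fm$-adic topologies on $S_{\bL}^W$ agree. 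The first map in the chain, $\bL\otimes\Lambda \to \bL[x_1,\dots,x_r]/I$, was already noted in the text to be an isomorphism ($\Lambda_{\bL} = \bL[x_1,\dots,x_r]/I$ canonically), so there is nothing to prove there.

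First I would treat the middle map $\bL[x_1,\dots,x_r]/I \to C(T)/IC(T)$. Apply Lemma \ref{lem:elem-comp} with $A = S_{\bL}$, $I$ the ($I$-adic, but here I want the $\fm$-adic) completion — more precisely, take the completion to be the $\fm$-adic one, $M = A = S_{\bL}$, and $J = I$ (which is finitely generated, being generated by $\sigma_1,\dots,\sigma_r$). The lemma gives $\wh{A}/J\wh{A} \xrightarrow{\cong} \wh{(A/JA)}$, i.e. $C(T)/IC(T) \cong (S_{\bL}/I)^{\wedge}$. But $S_{\bL}/I = \Lambda_{\bL}$ is a \emph{finite} $\bL$-module by Lemma \ref{lem:VIS} (it is $\bL\otimes_\Q\Lambda$ with $\Lambda$ a finite $\Q$-algebra), and the residual $\fm$-adic topology on a finite module over a Noetherian-enough base that is already $\fm$-torsion in the relevant sense is trivial — concretely, $\fm^{N+1}\Lambda_{\bL} = 0$ since $\Lambda_{\bL}$ is concentrated in degrees $\le N$ and $\fm$ raises degree. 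Hence $(S_{\bL}/I)^{\wedge} = S_{\bL}/I = \bL\otimes\Lambda$, and the middle map is an isomorphism.

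Next I would handle the last map $C(T)/IC(T) \to C(T)\otimes_{C(G)}\bL$. Here $\bL = S_{\bL}^W/(\sigma_1,\dots,\sigma_r) = C(G)/\fM$ where $\fM = (\sigma_1,\dots,\sigma_r)C(G)$ is the augmentation ideal of $C(G)$; so $C(T)\otimes_{C(G)}\bL = C(T)/\fM C(T)$. Now $\fM C(T)$ is the ideal of $C(T)$ generated by $\sigma_1,\dots,\sigma_r$, and by definition $I$ is the ideal of $S$ (hence of $S_{\bL}$) generated by the $W$-invariant homogeneous elements of positive degree, which is exactly $(\sigma_1,\dots,\sigma_r)S_{\bL}$ since the $\sigma_i$ generate $S_{\bL}^W$ as a $\bL$-algebra. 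Therefore $IC(T) = (\sigma_1,\dots,\sigma_r)C(T) = \fM C(T)$ as ideals of $C(T)$, and the last map is the identity. Composing, all maps in the chain are isomorphisms, and they are evidently ring homomorphisms.

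The main obstacle I anticipate is purely bookkeeping: being careful that the completion used throughout Lemma \ref{lem:elem-comp} is the $\fm$-adic (equivalently $(\sigma_i)$-adic, equivalently $I$-adic) one, and that $S_{\bL}^W$, $S_{\bL}$, $\Lambda_{\bL}$ are all modules over the appropriate base so the lemma applies — in particular that $I = (\sigma_1,\dots,\sigma_r)$ is finitely generated, which is where Lemma \ref{lem:VIS}'s identification of $S_{\bL}^W$ as a polynomial ring on $r$ generators is used. No genuinely hard input is needed beyond Lemmas \ref{lem:VIS} and \ref{lem:elem-comp}.
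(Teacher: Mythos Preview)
Your proposal is correct and follows essentially the same route as the paper: identify $C(T)\otimes_{C(G)}\bL$ with $C(T)/IC(T)$, apply Lemma~\ref{lem:elem-comp} to rewrite this as the $\fm$-adic completion of $S_{\bL}/I$, and then observe that this completion is already $S_{\bL}/I$ because $\fm$ is nilpotent there. The only cosmetic difference is that you deduce nilpotence directly from the grading ($\fm^{N+1}\Lambda_{\bL}=0$ since $\Lambda$ is concentrated in degrees $\le N$), whereas the paper phrases it via a decomposition of $\Lambda$ into artinian local factors; both arguments are equivalent and equally elementary.
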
 
\begin{proof} We first observe that the cobordism ring $C(T)$ is the inverse 
limit of the cobordism rings of the form ${\left(\Omega^*(BT)\right)}_{j\ge 0}$
on each of which the Weyl group acts. In particular, the action of $W$ on
$C(T)$ is induced by its action on the polynomial ring $\bL[x_1, \cdots, x_r]$
and $C(T)^W$ is the inverse limit of the $W$-invariants in the inverse system
${\left(\Omega^*(BT)\right)}_{j\ge 0}$. Thus we see that we can write
$S^W_{\bL} = \bL[\sigma_1, \cdots , \sigma_r] \inj \bL[x_1, \cdots , x_r]$
and $C(G) = {C(T)}^W$ is the subring of the power series ring 
$\bL[[x_1, \cdots , x_r]]$ generated by the homogeneous polynomials
$\{\sigma_1, \cdots , \sigma_r\}$. Moreover, the ideal $I$ in $C(T)$ 
is the extension of the ideal $(\sigma_1, \cdots , \sigma_r)$ of $S^W
= \Q[\sigma_1, \cdots ,\sigma_r]$ which we also denote by $I$. 
Let $\mathfrak{m}$ denote the ideal $(x_1, \cdots , x_r)$ of $S_{\bL}$.
Now we have
\[
\begin{array}{lll}
C(T) \otimes_{C(G)} \bL & \cong & C(T) \otimes_{C(G)}
\left(\frac{C(G)}{(\sigma_1, \cdots , \sigma_r)}\right) \\
& \cong & \frac{C(T)}{IC(T)} \\
& \cong & \frac{{\wh{({S_{\bL}})}}_{\mathfrak{m}}}
{I{\wh{({S_{\bL}})}}_{\mathfrak{m}}} \\ 
& \cong & {\wh{\left(\frac{S_{\bL}}{IS_{\bL}} \right)}}_{\mathfrak{m}}
\hspace*{2cm} ({\rm By \ Lemma~\ref{lem:elem-comp}}).
\end{array}
\]
On the other hand, 
\begin{equation}\label{eqn:complete}
\frac{S_{\bL}}{IS_{\bL}} \cong \bL \otimes_{\Q} 
\left(\frac{\Q[x_1, \cdots , x_r]}{I}\right) \cong 
\stackrel{s}{\underset{j = 1}\prod} \bL \otimes_{\Q} A_j,
\end{equation}
where each $A_j$ is an artinian local ring which is finite over
$\Q$. In particular, the ideal $\mathfrak{m}$ is nilpotent in each of the
factor $\bL \otimes A_j$ and hence the last term in ~\eqref{eqn:complete} 
is complete with respect to $\mathfrak{m}$. 
We conclude that $\frac{S_{\bL}}{IS_{\bL}}$ is complete in the 
$\mathfrak{m}$-adic topology. In particular, we obtain
\[
C(T) \otimes_{C(G)} \bL \cong 
{\wh{\left(\frac{S_{\bL}}{IS_{\bL}} \right)}}_{\mathfrak{m}}
\cong \frac{S_{\bL}}{IS_{\bL}}
\]
and this completes the proof.
\end{proof}
\begin{cor}\label{cor:finite-trivial}
Let $X \times G/B \xrightarrow{\pi} X$ be the trivial flag bundle.
Then the map $\lambda_X$ is given by 
\[
\Omega^*(X) \otimes_{\Q} \Lambda \xrightarrow{\cong} \Omega^*(X) 
\otimes_{\bL} \Lambda_{\bL} \xrightarrow{\lambda_X}
\Omega^*(E/B)
\]
which is an $\bL$-algebra isomorphism if $X$ is smooth.
\end{cor}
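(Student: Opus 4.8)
The plan is to deduce the corollary from Remark~\ref{remk:stack-way} together with the algebraic computation of Corollary~\ref{cor:Finite}, so that the only substantive point left is an exterior-product (K\"unneth) statement for the variety $G/B$.

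For the trivial bundle $E = G\times X$, Remark~\ref{remk:stack-way} supplies two inputs. First, the $C(G)$-module structure on $\Omega^*(X)$ factors through the canonical map $C(G)\to\bL$, so that
\[
\Omega^*(X)\otimes_{C(G)}C(T)\;\cong\;\Omega^*(X)\otimes_{\bL}\bigl(\bL\otimes_{C(G)}C(T)\bigr).
\]
Second, under the identification $\bL\otimes_{C(G)}C(T)\cong\Omega^*(G/B)$ of \cite[Theorem~7.6]{Krishna3}, the map $\lambda_X$ becomes the exterior product $\Omega^*(X)\otimes_{\bL}\Omega^*(G/B)\to\Omega^*(X\times G/B)=\Omega^*(E/B)$. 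The first step is then to feed in Corollary~\ref{cor:Finite}, which gives $\bL\otimes_{C(G)}C(T)\cong\bL\otimes_{\Q}\Lambda=\Lambda_{\bL}$; combining this with $\Lambda_{\bL}=\bL\otimes_{\Q}\Lambda$ yields
\[
\Omega^*(X)\otimes_{C(G)}C(T)\;\cong\;\Omega^*(X)\otimes_{\bL}\Lambda_{\bL}\;\cong\;\Omega^*(X)\otimes_{\Q}\Lambda,
\]
which is the first displayed isomorphism in the statement and identifies $\lambda_X$ with the exterior product map.

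The second step is to see that this exterior product is an isomorphism. Here I would use that $G/B$ is a cellular variety: the Bruhat decomposition stratifies it by affine cells $C_w\cong\A^{\ell(w)}$ ($w\in W$) with closures the Schubert varieties $\overline{C_w}$. Filtering $X\times G/B$ by the closed subschemes $X\times\bigl(\bigcup_{\ell(w)\le m}\overline{C_w}\bigr)$ and inducting on $m$ via the localization sequence of Theorem~\ref{thm:Levine-M} and homotopy invariance, one identifies $\Omega_*(X\times G/B)$ with the free $\Omega_*(X)$-module on the Schubert classes, so the exterior product is an isomorphism of $\bL$-modules; equivalently one cites the K\"unneth statement of \cite{Krishna3} (very likely already part of \cite[Theorem~7.6]{Krishna3}, in which case the corollary is the purely formal substitution of $\Omega^*(G/B)\cong\Lambda_{\bL}\cong\bL\otimes_{\Q}\Lambda$ into its conclusion). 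Since $\Omega^*(G/B)\cong\Lambda_{\bL}$ is free of finite rank over $\bL$ by Lemma~\ref{lem:VIS}, no $\Tor$-terms intervene. When $X$ is smooth, every group in sight is a ring, the exterior product of $\Omega^*$ is multiplicative, and all the identifications used above (including those of Corollary~\ref{cor:Finite} and $\Lambda_{\bL}=\bL\otimes_{\Q}\Lambda$) are ring isomorphisms, so $\lambda_X$ is an $\bL$-algebra isomorphism.

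I expect the only real obstacle to be the inductive K\"unneth step, since the localization sequence is merely right-exact: to upgrade it to an isomorphism one must either use the splittings coming from the cellular filtration (building each $\overline{C_w}$ from the lower cells) or exploit the freeness of $\Omega^*(G/B)$ over $\bL$ to keep the relevant sequences exact after $-\otimes_{\bL}\Omega^*(X)$. If that K\"unneth isomorphism is already on record in \cite{Krishna3}, essentially nothing remains beyond tracking the two chains of canonical isomorphisms and checking compatibility with products in the smooth case.
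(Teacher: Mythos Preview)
Your proposal is correct and follows essentially the same route as the paper: identify the source of $\lambda_X$ via Remark~\ref{remk:stack-way} and Corollary~\ref{cor:Finite}, then invoke a K\"unneth isomorphism for the cellular variety $G/B$. The paper's proof differs only in the citations for the last step: it quotes \cite[Lemma~6.5]{Krishna3} together with \cite[Theorem~3.1]{Levine1} rather than \cite[Theorem~7.6]{Krishna3}, and does not spell out the Bruhat-cell induction you sketch as a fallback.
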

\begin{proof}
The first assertion of the corollary follows directly from 
~\eqref{eqn:trivial} and Corollary~\ref{cor:Finite}. If $X$ is smooth, this
map is an $\bL$-algebra homomorphism because so are the maps in 
~\eqref{eqn:CM1*} and Corollary~\ref{cor:Finite}. Moreover, it is
an isomorphism by  \cite[Lemma~6.5]{Krishna3} and 
\cite[Theorem~3.1]{Levine1}.
\end{proof}

\section{Surjectivity of $\lambda_X$}\label{section:Surj}
We now let $p : E \to X$ be a an arbitrary principal $G$-bundle and let 
$\pi: E/B \to X$ be the associated flag bundle.
Using the above inclusions of the polynomial rings inside the 
power series rings, we get natural homomorphisms 
\begin{equation}\label{eqn:REDP}
\xymatrix{
\Omega^*(X) \otimes_{S^W_{\bL}} S_{\bL} \ar[dr]^{\ \ \ \ \phi_X} 
\ar[d]_{\delta_X} & \\
\Omega^*(X) \otimes_{C(G)} C(T) \ar[r]_{\ \ \ \ \ \lambda_X} & \Omega^*(E/B)}
\end{equation} 
of $S_{\bL}$-modules, which are also $S_{\bL}$-algebra homomorphisms if
$X$ is smooth.

As a first step towards proving Theorem~\ref{thm:Cob-Main}, we show in this
section that the
map $\lambda_X$ is surjective. In fact, the proof that follows will 
show that the map $\phi_X$ is surjective. It will eventually turn out that 
both the maps $\phi_X$ and $\lambda_X$ are isomorphisms. We begin with
the following elementary property of principal bundles and the
local property of algebraic cobordism.
\begin{lem}\label{lem:trivial-flag}
Let $p : E \to X$ be a principal $G$-bundle and let $\pi : E/B \to X$ be the
associated flag bundle. Then the $G$-action $G \times E \xrightarrow{\mu} E$
induces a commutative diagram
\begin{equation}\label{eqn:tr-flag1}
\xymatrix{
G/B \times E/B \ar[r]^{\ \ \ \ {\mu}'} \ar[d]_{{\pi}'} & E/B \ar[d]^{\pi} \\
E/B \ar[r]_{\pi} & X}
\end{equation}
which is Cartesian and where ${\pi}'$ is the projection to the second factor.
\end{lem}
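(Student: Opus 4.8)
The statement asserts that the $G$-action on $E$ descends to a Cartesian square relating the trivial $G/B$-bundle over $E/B$ with the flag bundle $\pi : E/B \to X$. The natural approach is to work étale-locally on $X$: pick an open cover $\{U_\alpha\}$ of $X$ over which $E$ trivializes, so that $E|_{U_\alpha} \cong G \times U_\alpha$ and hence $(E/B)|_{U_\alpha} \cong G/B \times U_\alpha$. Over each $U_\alpha$ the $G$-action on $E$ becomes left translation on the $G$-factor, and one checks directly that the induced map $G/B \times (G/B \times U_\alpha) \to G/B \times U_\alpha$ sending $(gB, (hB, u))$ to $(ghB, u)$ together with the second projection ${\pi}'$ fits into the asserted Cartesian square: the fiber product of $\pi : G/B \times U_\alpha \to U_\alpha$ with itself is exactly $G/B \times G/B \times U_\alpha$, and the map $\mu'$ realizes the standard identification $G/B \times_{U_\alpha} G/B \cong G/B \times (G/B \times U_\alpha)$ via $(xB, hB) \mapsto (xh^{-1}B, (hB))$ — i.e. $\mu'$ is an isomorphism onto one of the two projections after this relabeling, which is precisely what "Cartesian with ${\pi}'$ the second projection" records.

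The key steps, in order: (1) define $\mu'$ as the map induced on $B$-quotients by $\mu : G \times E \to E$, noting that $\mu$ is $B$-equivariant for the right $B$-action on the second factor $E$ (equivalently on $G \times E$ through the $E$-factor), so the quotient map exists and the square with $\pi$ on the bottom commutes by construction since $p \circ \mu = p \circ \mathrm{pr}_2$; (2) verify that ${\pi}'$, the map $G/B \times E/B \to E/B$ induced by $\mathrm{pr}_2 : G \times E \to E$, is literally the projection to the second factor; (3) check the Cartesian property. For (3), since being Cartesian is étale-local on the base $X$, reduce to the trivial bundle $E = G \times X$ as above, where the claim is the elementary statement that $(G/B \times X) \times_X (G/B \times X) \cong G/B \times (G/B \times X)$ with the two structure maps going to $\mu'$ and ${\pi}'$ respectively. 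This last isomorphism is the standard "shearing" isomorphism for a group quotient: the map $G \times G \to G \times G$, $(x,h) \mapsto (xh, h)$, descends to $G/B \times G/B \cong G/B \times (G/B)$ after quotienting the first and the product appropriately — here one uses that $B$ acts freely and the quotients are geometric.

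**Main obstacle.** The only real subtlety is bookkeeping: making sure the left/right actions and the quotients are matched up so that the descended map ${\mu}'$ genuinely has the trivial bundle $G/B \times E/B$ as its source (rather than some twisted form) and so that the square is Cartesian on the nose rather than merely up to a non-canonical isomorphism. Concretely, one should exhibit the iso $G/B \times_X (E/B) \xrightarrow{\cong} G/B \times (E/B)$ explicitly and check it intertwines the projections with $(\mu', \pi')$; once the trivialization-and-shear argument is set up correctly, étale descent of the Cartesian property (or just the fact that an isomorphism can be checked on an étale cover) finishes the proof. No deep input from cobordism is needed — this lemma is purely about the geometry of flag bundles and sets up the homotopy-invariance and localization arguments that follow.
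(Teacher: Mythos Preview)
Your argument is correct, but the paper takes a shorter global route that avoids \'etale-localization altogether. It invokes the standard fact (cited from \cite{GIT}) that for any principal $G$-bundle $p:E\to X$ the action map already sits in a Cartesian square
\[
\xymatrix{G \times E \ar[r]^{\ \ \mu} \ar[d]_{p'} & E \ar[d]^{p} \\ E \ar[r]_{p} & X}
\]
with $p'$ the second projection; this is precisely the global form of the shearing isomorphism $G\times E \cong E\times_X E$, $(g,e)\mapsto(ge,e)$, that you reproduce in local coordinates. The paper then simply passes to $B$-quotients in two steps (first on $E$ to get $G\times E/B\to E/B$, then on $G$ via the left $B$-action, which is trivial on the $E/B$ factor) to produce $\mu'$, and observes that the Cartesian property descends from the square above. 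So the paper's approach keeps everything global and makes transparent that the only input is the definition of a principal bundle; your approach trades this for an explicit local computation plus \'etale descent of the Cartesian property. Both are valid and elementary, but the paper's version is a bit cleaner and sidesteps the bookkeeping issue you flag, since the shearing is done once at the level of $E$ rather than fibrewise at the level of $G/B$.
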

\begin{proof}
Since $E \xrightarrow{p} X$ is a principal $G$-bundle quotient of 
quasi-projective schemes, the action map $G \times E \xrightarrow{\mu} E$
induces a commutative diagram
\begin{equation}\label{eqn:tr-flag2}
\xymatrix{
G \times E \ar[r]^{\ \ \ \ \mu} \ar[d]_{p'} & E \ar[d]^{p} \\
E \ar[r]_{p} & X}
\end{equation}
which is Cartesian and where $p'$ is the projection to the second factor 
by the general properties of principal bundles ({\sl cf.} \cite[0.10]{GIT}).

Now, the map $\mu$ descends to a map $G \times E/B \to E/B$. Moreover,
this map is $B$-equivariant where $B$ acts trivially on $E/B$ and by left
multiplication on $G$. Taking the quotients, we get a canonical
map $G/B \times E/B \xrightarrow{{\mu}'} E/B$ making the diagram 
~\eqref{eqn:tr-flag1} commute. It is now an easy exercise to check from 
~\eqref{eqn:tr-flag2} that this diagram is Cartesian too. 
\end{proof}

\begin{lem}\label{lem:star} 
Let $f : X' \to X$ be a finite and \'etale morphism of smooth and connected
schemes. Then there exists an open subscheme $U \stackrel{j}{\inj} X$
such that for the map $g = f|_{U'} : U' = f^{-1}(U) \to U$, one has
$g_*(1) = [k(X') : k(X)]$.
\end{lem}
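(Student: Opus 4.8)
The plan is to spread out, over the generic point of $X$, the tautological identity between the class of an $n$-sheeted cover and $n$ times the identity, using the localization sequence to realize this spreading out. Set $d = \dim X$; since $f$ is finite and $X'$ is irreducible we also have $\dim X' = d$, and $n := [k(X'):k(X)]$ is the degree of $f$. Writing $1 = [\id_{X'}] \in \Omega^0(X') = \Omega_d(X')$, so that $f_*(1) = [X' \xrightarrow{f} X] \in \Omega^0(X) = \Omega_d(X)$, I would consider
\[
\delta \;:=\; f_*(1) - n\cdot 1 \;\in\; \Omega^0(X),
\]
and aim to find a dense open $U \subseteq X$ with $j^*(\delta) = 0$ in $\Omega^0(U)$, where $j : U \hookrightarrow X$. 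This suffices: the square with vertices $U' = f^{-1}(U)$, $X'$, $U$, $X$ is Cartesian with $j$ an open immersion, so base change gives $j^*\bigl(f_*(1)\bigr) = g_*(1)$ and clearly $j^*(n\cdot 1) = n\cdot 1_U$, whence $g_*(1) = [k(X'):k(X)]$ in $\Omega^0(U)$.

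The first step would be to show that $\delta$ is supported in codimension $\geq 1$, i.e.\ $\delta \in F_{d-1}\Omega_d(X)$. I would apply the canonical map $\Omega^0(X) \to \ch^0(X)$, which commutes with projective push-forward since it is a morphism of oriented Borel--Moore homology theories. As $X$ is smooth and irreducible, $\ch^0(X) = \ch_d(X) = \Q$, and $\delta$ maps to $f_*[X'] - n[X] = (\deg f)\,[X] - n[X] = 0$; hence $\delta$ lies in the kernel of $\Omega^0(X) \to \ch^0(X)$. By Levine--Morel's presentation $\ch^*(X) = \Omega^*(X)\otimes_{\bL}\Q$, this kernel in degree $0$ is spanned by products $a\cdot\xi$ with $a \in \bL$ of negative degree and $\xi \in \Omega^j(X)$ for some $j \geq 1$. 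For $j \geq 1$, each generator $[Y \to X]$ of $\Omega^j(X) = \Omega_{d-j}(X)$ has $\dim Y < d$ and factors through its reduced image, a closed subscheme of dimension $< d$; thus $\Omega^j(X)$ is contained in $F_{d-1}\Omega_{d-j}(X)$, and since multiplication by $a$ commutes with push-forward along closed immersions it preserves supports, so $a\cdot\xi \in F_{d-1}\Omega_d(X)$. Therefore $\delta \in F_{d-1}\Omega_d(X)$.

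For the last step, using that $X$ is irreducible, $F_{d-1}\Omega_d(X)$ is the union of the images of $\Omega_d(Z) \to \Omega_d(X)$ over proper closed subschemes $Z \subsetneq X$, and these images are directed (any two such $Z$ lie inside their union); so there is a single $Z \subsetneq X$ with $\delta \in \operatorname{im}\bigl(\Omega_d(Z) \to \Omega_d(X)\bigr)$. Taking $U = X \setminus Z$, the localization sequence $\Omega_d(Z) \to \Omega_d(X) \to \Omega_d(U) \to 0$ of Theorem~\ref{thm:Levine-M} yields $j^*(\delta) = 0$, which by the first paragraph is exactly the assertion of the lemma. The main point requiring attention is the identification in the middle paragraph of $\ker(\Omega^0(X) \to \ch^0(X))$ with classes supported in positive codimension; a variant would instead invoke a continuity property of $\Omega^*$ under filtered limits of opens, reducing to the equality $[\Spec k(X')] = n$ in the dimension-zero cobordism of $\Spec k(X)$, which coincides with $\ch_0(\Spec k(X)) = \Q$. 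Everything else --- the degree formula $f_*[X'] = (\deg f)[X]$, base change for push-forward along an open immersion, and the localization sequence --- is standard or already recorded above.
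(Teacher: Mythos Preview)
Your proof is correct, but it takes a different path from the paper's. The paper argues directly at the generic point: it pulls back along $\eta \to X$, uses the base-change formula to identify $p^*\bigl(f_*(1)\bigr)$ with $h_*(1) = d$ in $\Omega^*(\Spec k(X))$, and then invokes the ``generic constancy'' of algebraic cobordism \cite[Lemma~13.3, Corollary~13.4]{LM1} to produce an open $U$ on which $j^*f_*(1) = d$. This is precisely the variant you sketch in your last paragraph. Your main argument instead routes through the presentation $\ch^* = \Omega^* \otimes_{\bL} \Q$: you observe that $\delta = f_*(1) - d$ dies in $\ch^0(X)$, identify the kernel of $\Omega^0(X) \to \ch^0(X)$ with $\sum_{j\ge 1}\bL^{-j}\cdot\Omega^j(X)$, and check that each such summand lies in $F_{d-1}\Omega_d(X)$, whence the localization sequence kills $\delta$ on the complement of a suitable closed $Z$. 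Both arguments formalize the same intuition---that $\delta$ vanishes generically and hence on an open---but the paper's route is shorter and cites an off-the-shelf continuity statement, while yours trades that citation for the (also nontrivial) Levine--Morel comparison with Chow groups and an explicit analysis of the niveau filtration.
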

\begin{proof}
Let $\eta$ denote the generic point of $X$ and consider the Cartesian diagram
\begin{equation}\label{eqn:star1}
\xymatrix@C.6pc{
X'_{\eta} \ar[d]_{h} \ar[r]^{i'} & U' \ar[d]^{g} \ar[r]^{j'} & X' \ar[d]^{f} \\
\eta \ar[r]_{i} & U \ar[r]_{j} & X,}
\end{equation}
where $U$ is any open subscheme of $X$. Since $X'$ is connected, we see
that $X_{\eta} = {\rm Spec}(k(X'))$. Put $p = j \circ i$,
$p' = j' \circ i'$ and $d = [k(X') : k(X)]$. It follows from 
\cite[Lemma~4.7]{LM1} that 
\[
p^* \circ f_* (1) = h_* \circ p'^*(1) = h_*(1) = d.
\]
Since the algebraic cobordism is generically constant by 
\cite[Lemma~13.3, Corollary~13.4]{LM1}, there exists an open subscheme 
$U \stackrel{j}{\inj} X$ such that $j^* \circ f_*(1) = d$ in $\Omega^*(U)$.
This in turn implies that 
\[
g_*(1) = g_* \circ j'^*(1) = j^* \circ f_*(1) = d
\]
and this proves the lemma.
\end{proof}
\begin{cor}\label{cor:Retract}
Let $f : X' \to X$ be a finite and \'etale morphism of smooth and connected
schemes and consider the diagram ~\eqref{eqn:Funct1}. Then there exists an
open subscheme $U \stackrel{j}{\inj} X$ such that for 
$g = f|_{U'} : U' = f^{-1}(U) \to U$, one has a commutative diagram
\begin{equation}\label{eqn:Retract1}
\xymatrix@C.6pc{
\Omega^*(U) \otimes_{C(G)} C(T) \ar[d]_{\ \ \ \ \lambda_U}
\ar[r]^{g^*} & \Omega^*(U') \otimes_{C(G)} C(T) \ar[d]_{\ \ \ \ \lambda_{U'}} 
\ar[r]^{g_*} & \Omega^*(U) \otimes_{C(G)} C(T) \ar[d]_{\ \ \ \ \lambda_U} \\
\Omega^*({E_U}/B) \ar[r]_{\ov{g}^*} & \Omega^*({E_{U'}}/B) \ar[r]_{{\ov{g}}_*}
& \Omega^*({E_U}/B)}
\end{equation}
such that the horizontal composite maps are multiplication
by $[k(X') : k(X)]$.
\end{cor}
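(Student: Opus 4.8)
The plan is to combine the pointwise degree computation of Lemma~\ref{lem:star} with the functoriality of $\lambda$ from Lemma~\ref{lem:Funct}, and then read off the ``multiplication by $[k(X'):k(X)]$'' statement from the projection formula. First I would set $d = [k(X'):k(X)]$ and invoke Lemma~\ref{lem:star} to produce the open subscheme $U \stackrel{j}{\inj} X$ for which the restriction $g = f|_{U'} : U' = f^{-1}(U) \to U$ satisfies $g_*(1) = d$ in $\Omega^*(U)$. I would then record that $g$, being a restriction of the finite \'etale map $f$, is itself finite and \'etale, hence simultaneously projective and smooth, and that the induced map on flag bundles $\bar g : E_{U'}/B \to E_U/B$ is the base change of $g$ along $\pi_U : E_U/B \to U$ (the relevant square in~\eqref{eqn:Funct1}, restricted to $U$, is Cartesian with $E_{U'}/B \cong (E_U/B)\times_U U'$), so $\bar g$ is finite and \'etale as well.

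Next I would obtain the commutativity of the two squares in~\eqref{eqn:Retract1} directly from Lemma~\ref{lem:Funct}, applied with base $U$ and principal bundle $E_U \to U$: the left square is the instance~\eqref{eqn:Funct3} for the smooth morphism $g$, and the right square is the instance~\eqref{eqn:Funct2} for the projective morphism $g$. No new input is needed here beyond the observation that a finite \'etale morphism plays both roles.

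For the horizontal composites I would argue by the projection formula for $\Omega^*$. Along the top row, $g_*(g^*(y)) = y\cdot g_*(1) = d\,y$ for $y \in \Omega^*(U)$ by Lemma~\ref{lem:star}, and tensoring over $C(G)$ with $C(T)$ shows that $(g_*\otimes\id)\circ(g^*\otimes\id)$ is multiplication by $d$. Along the bottom row I would first check that $\bar g_*(1) = d$ in $\Omega^*(E_U/B)$: since $\pi_{U'}$ is smooth we have $\pi_{U'}^*(1) = 1$, and the Cartesian square relating $\bar g$, $g$, $\pi_{U'}$, $\pi_U$ has $\pi_U$ smooth and $g$ projective, so $\bar g_*(1) = \bar g_*(\pi_{U'}^*(1)) = \pi_U^*(g_*(1)) = \pi_U^*(d) = d$; then the projection formula again gives $\bar g_*(\bar g^*(z)) = z\cdot \bar g_*(1) = d\,z$ for all $z \in \Omega^*(E_U/B)$.

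I do not anticipate a serious obstacle; the only point needing care is securing $\bar g_*(1) = d$ \emph{on all of} $\Omega^*(E_U/B)$, rather than on some further open subset of $E_U/B$ which in general is not of the form $\pi^{-1}$ of an open in $U$ and so would be useless for the statement. Routing this through the base-change identity $\bar g_*\circ\pi_{U'}^* = \pi_U^*\circ g_*$ avoids re-running a generic-constancy argument on $E_U/B$ and yields the equality globally. Everything else is a formal consequence of Lemmas~\ref{lem:star} and~\ref{lem:Funct} together with the standard projection and base-change formulae for algebraic cobordism.
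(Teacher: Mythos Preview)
Your proposal is correct and follows essentially the same route as the paper: choose $U$ via Lemma~\ref{lem:star}, invoke Lemma~\ref{lem:Funct} for the two squares, compute $\bar g_*(1)=d$ through the base-change identity $\bar g_*\circ\pi_{U'}^*=\pi_U^*\circ g_*$, and conclude via the projection formula. Your write-up is in fact more explicit than the paper's (which compresses the base-change step into a single line), and your remark about why one should route $\bar g_*(1)=d$ through base change rather than re-running a generic-constancy argument on $E_U/B$ is a point the paper leaves implicit.
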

\begin{proof}
We choose  $U \stackrel{j}{\inj} X$ as in Lemma~\ref{lem:star}.
The commutativity of the diagram follows from Lemma~\ref{lem:Funct}.
Moreover, as $f$ is finite and \'etale of degree $d$, it follows that
$\ov{f}$ is also a morphism of the same type. 
We claim that $\ov{g}_*(1) = k(X') : k(X)]$. To see this, we 
evaluate the required term as
\[
\ov{g}_*(1) = \ov{g}_* \circ {\pi}^*_{U'} (1) = {\pi}^*_U \circ g_*(1)
= [k(X') : k(X)],
\]
where the second equality follows from Lemma~\ref{lem:star}
and this proves the claim.  The corollary now follows from the
projection formula.
\end{proof} 
% the projection formula can be locally proven in the same way as
% Lemma~\ref{lem:star}. That is, we can prove that on some open subset,
% it is $f_* \circ f^*$ is multiplication by degree.
% It is very easy to check for finite map that $f_* \circ f^* (x) = f_*(1)
% \cdot x$.

\begin{prop}\label{prop:Surj}
The map $\lambda_X$ is surjective for any scheme $X$.
\end{prop}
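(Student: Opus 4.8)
The plan is to prove surjectivity of $\phi_X$ (hence of $\lambda_X$, which factors through $\phi_X$ via $\delta_X$) by Noetherian induction on closed subsets of $X$, using the localization sequence for algebraic cobordism together with the case of trivial flag bundles. First I would reduce to the case where $X$ is irreducible: if $Z \subsetneq X$ is a proper closed subset with open complement $U$, the localization sequence gives a right-exact row $\Omega^*(Z) \to \Omega^*(X) \to \Omega^*(U) \to 0$, and tensoring the flat $C(G)$-module $C(T)$ preserves right-exactness; using the functoriality of $\lambda$ (Lemma~\ref{lem:Funct}) for the closed immersion $Z \inj X$ and the open immersion $U \inj X$, a diagram chase shows that if $\phi_Z$ and $\phi_U$ are surjective then so is $\phi_X$. (One must note that the relevant localization sequence also holds for $E/B$ after pulling back $E$ to $Z$ and $U$; this is immediate since $E_Z/B$ and $E_U/B$ are the restrictions of $E/B$.) So I may assume $X$ is integral, and by replacing $X$ with a dense open subset — again via localization — I may shrink $X$ at will.

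Next, the key geometric input: over a sufficiently small dense open $U \subset X$, the principal $G$-bundle $E$ becomes trivializable \emph{after a finite \'etale base change}. Concretely, there is a dense open $U$, a finite \'etale cover $g : U' \to U$, and a trivialization $E_{U'} \cong U' \times G$. (This is a standard fact about principal bundles under a connected linear algebraic group over a field of characteristic zero: generically one can split the bundle after passing to a finite separable extension of the function field, then spread out.) For $U'$ we then know $\phi_{U'}$ is surjective — indeed an isomorphism onto $\Omega^*(E_{U'}/B)$ — by Corollary~\ref{cor:finite-trivial} applied to the trivial flag bundle $U' \times G/B \to U'$. Now I invoke Corollary~\ref{cor:Retract}: shrinking $U$ further if necessary, the composite
\[
\Omega^*(E_U/B) \xrightarrow{\ \ov{g}^*\ } \Omega^*(E_{U'}/B) \xrightarrow{\ \ov{g}_*\ } \Omega^*(E_U/B)
\]
is multiplication by the integer $d = [k(U') : k(U)]$, which is invertible since we work with $\Q$-coefficients; and the diagram of Corollary~\ref{cor:Retract} intertwines this with the analogous retraction on $\Omega^*(U) \otimes_{C(G)} C(T)$. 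Given $\eta \in \Omega^*(E_U/B)$, write $\eta = d^{-1}\,\ov{g}_*\ov{g}^*(\eta)$; since $\phi_{U'}$ is surjective, $\ov{g}^*(\eta) = \lambda_{U'}(\theta)$ for some $\theta \in \Omega^*(U') \otimes_{C(G)} C(T)$, and then commutativity of the right-hand square in~\eqref{eqn:Retract1} gives $\eta = d^{-1}\,\lambda_U\bigl(g_*(\theta)\bigr)$, so $\phi_U$ (hence $\lambda_U$) is surjective. This handles the dense open $U$; combined with the Noetherian induction on the complementary closed set $X \setminus U$, we conclude $\phi_X$, and therefore $\lambda_X$, is surjective for all $X$.

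The main obstacle I anticipate is the base-change step — producing the dense open $U$ over which $E$ is trivial after a finite \'etale cover, and organizing the shrinking so that \emph{simultaneously} the splitting exists, the retraction identity of Lemma~\ref{lem:star}/Corollary~\ref{cor:Retract} holds, and the localization sequences line up. Each ingredient is available (generic triviality of $G$-torsors after a finite separable extension, spreading out, generic constancy of cobordism from \cite{LM1}), but one has to intersect finitely many dense opens and check that none of the previously arranged properties is destroyed; the use of $\Q$-coefficients to invert $d$ is essential and should be flagged. The rest — the diagram chases through the localization sequence and through~\eqref{eqn:Retract1} — is routine.
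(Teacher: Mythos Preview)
Your proposal is correct and follows essentially the same route as the paper: induction on the dimension of $X$ via the localization sequence, trivializing the bundle over a dense open after a finite \'etale cover, invoking Corollary~\ref{cor:finite-trivial} for the trivial case, and descending via the transfer argument of Corollary~\ref{cor:Retract}. The only point to make explicit is that Corollary~\ref{cor:finite-trivial} requires $U'$ (equivalently $U$) to be \emph{smooth}, which you should add to your list of conditions when shrinking $U$; in characteristic zero this is automatic on a dense open, and the paper arranges it in exactly this way.
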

\begin{proof} We shall prove this by induction on the dimension of $X$.
We can assume that $X$ is reduced. If $X$ is zero-dimensional, it is of the
form $X = {\rm Spec}(K)$, where $K$ is a finite product of
finite field extensions of $k$. We prove the case when $X = {\rm Spec}(k)$. 
The same proof applies for any finite extension of $k$.
Now, there is a finite extension $k \inj l$ such that $Y_{l}$ is of the form
$G_{l}/{B_{l}}$. Hence the result holds for $X = {\rm Spec}(l)$ by  
\cite[Theorem~7.6]{Krishna3}. The case of ${\rm Spec}(k)$ now follows from
Corollary~\ref{cor:Retract}.

If the map $\pi$ is of the form $X \times G/B \xrightarrow{\pi} X$ with $X$
smooth, the maps $\lambda_X$ and $\phi_X$ are in fact isomorphisms by
~\eqref{eqn:trivial}, Corollary~\ref{cor:finite-trivial},
\cite[Lemma~6.5]{Krishna3} and \cite[Theorem~3.1]{Levine1}. In the general
case, we can find an \'etale cover $X' \xrightarrow{f} X$ such that
the base change $E/B {\times}_X X' \xrightarrow{{\pi}'} X'$ is the
trivial flag bundle $X' \times G/B \to X'$.
We can now find a smooth and dense open subset $U \stackrel{j}{\inj} X$
such that the map $U' = f^{-1}(U) \to U$ is finite and \'etale. Moreover,
the flag bundle is still trivial on $U'$. Since $U$ is a disjoint union of
smooth and connected schemes, the surjectivity of $\lambda_U$ follows from
the case of the trivial bundle shown above and Corollary~\ref{cor:Retract}.

We now let $Z = X - U$ be the complement of $U$ in $X$ with the reduced
closed subscheme structure and consider the diagram
\begin{equation}\label{eqn:induction}
\xymatrix@C.5pc{
\Omega^*(Z) \otimes_{C(G)} C(T) \ar[r] \ar[d]_{\lambda_Z} &
\Omega^*(X) \otimes_{C(G)} C(T) \ar[r] \ar[d]_{\lambda_X} &
\Omega^*(U) \otimes_{C(G)} C(T) \ar[r] \ar[d]^{\lambda_U} & 0 \\
\Omega^*({E_Z}/B) \ar[r] & \Omega^*(E/B) \ar[r] & 
\Omega^*({E_U}/B) \ar[r] & 0}
\end{equation}
which is commutative by Lemma~\ref{lem:Funct} and whose rows are exact by 
Theorem~\ref{thm:Levine-M}.
Since $U$ is open and dense, $Z$ is a closed
subscheme of dimension which is strictly smaller than that of $X$. Hence
the map $\lambda_Z$ is surjective by induction. We have shown above that
$\lambda_U$ is surjective. Hence the map $\lambda_X$ is surjective too.
\end{proof}
\begin{remk}\label{remk:Surj-General}
Since the maps 
\[
\Omega^*(X) \otimes_{S^W_{\bL}} S_{\bL} \xrightarrow{\delta_X} 
\Omega^*(X) \otimes_{C(G)} C(T) \xrightarrow{\lambda_X}  \Omega^*(E/B)
\]
are isomorphisms for the trivial bundle $G/B \times X \xrightarrow{\pi} X$
for $X$ smooth by Corollary~\ref{cor:finite-trivial}, 
exactly the same proof as for Proposition~\ref{prop:Surj}
shows that the maps $\delta_X$ and $\phi_X$ are also surjective for any
scheme $X$.
\end{remk}

\section{Proof of Theorem~\ref{thm:Cob-Main}}\label{section:ProofI}
Recall from Section~\ref{section:ALG-R} that $\Lambda = S/I
= \Lambda_0 \oplus \Lambda_1 \oplus \cdots  \oplus \Lambda_N$ is the graded
quotient of $S = \Q[x_1, \cdots , x_r]$ by the ideal $I$ which is 
generated by the homogeneous polynomials of positive degree which are 
invariant under $W$. It is clear that
$\Lambda_0$ is one-dimensional over $\Q$ generated by the unit element
of the ring. It also follows from Lemma~\ref{lem:VIS} that
$\Lambda_N$ is an one-dimensional $\Q$-vector space. 
We fix these two generators and denote them by $\rho_0 = 1$ and 
$\rho_N$ respectively. Let $p_0 = 1$ and $p_N$ be their homogeneous
lifts in $S_0$ and $S_N$ respectively. For any scheme $X$, we consider the
map
\begin{equation}\label{eqn:PSI}
\psi : \Omega^*(X) \to \Omega^*(X)
\end{equation}
\[
\psi(x) = \pi_* \circ \phi_X(x \otimes p_N) = \pi_*\circ\left(c_1(p_N) \cap
\pi^*(x)\right)
\]
where $\phi_X$ is the homomorphism in ~\eqref{eqn:REDP}.
We need the following property of this map.
\begin{lem}\label{lem:Closed}
Let $Z \stackrel{i}{\inj} X$ be a closed subscheme such that $\psi_Z$ is
identity. Let $\Omega^*_Z(X) \subset \Omega^*(X)$ be the image of the 
map $\Omega^*(Z) \xrightarrow{i_*} \Omega^*(X)$. Then, $\psi_Z$ induces
a map $\psi^Z_X : \Omega^*_Z(X) \to \Omega^*_Z(X)$ such that the diagram
\begin{equation}\label{eqn:Closed1}
\xymatrix{
\Omega^*_Z(X) \ar[r] \ar[d]_{\psi^Z_X} & \Omega^*(X) \ar[d]^{\psi_X} \\
\Omega^*_Z(X) \ar[r] & \Omega^*(X)}
\end{equation}
is commutative and $\psi^Z_X$ is identity.
\end{lem}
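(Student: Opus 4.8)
The plan is to exploit the functoriality of $\psi$ with respect to the closed immersion $i : Z \inj X$, which in turn rests on the functoriality of $\phi_X$ proved in Lemma~\ref{lem:Funct}. First I would observe that since $i$ is projective, the construction of $\phi_X$ in ~\eqref{eqn:REDP} is natural: applying Lemma~\ref{lem:Funct} (the square ~\eqref{eqn:Funct2} with $f = i$, together with the factorization of $\lambda$ through $\phi$ in ~\eqref{eqn:REDP}) gives a commutative square relating $\phi_Z(- \otimes p_N)$ and $\phi_X(- \otimes p_N)$ via the pushforwards $i_*$ on the base and $\ov{i}_*$ on the flag bundle. Combining this with the compatibility of $\pi_*$ with the Cartesian square of flag bundles ~\eqref{eqn:Funct1} — i.e. $\pi_* \circ \ov{i}_* = i_* \circ (\pi_Z)_*$ — one gets the identity
\[
\psi_X \circ i_* = i_* \circ \psi_Z : \Omega^*(Z) \to \Omega^*(X).
\]
This is the computational heart of the argument, and is the step where I expect to spend the most care: one must check that the lift $p_N \in S_N$ and the module structures used in $\phi_X$ and $\phi_Z$ are literally the same element pulled back along $\ov{i}$, so that $\ov{i}_* (c_1(p_N) \cap \pi_Z^*(-)) = c_1(p_N) \cap \pi^*(i_*(-))$; this follows from the projection formula (Theorem~\ref{thm:Basic}(vi)) once one notes $\ov{i}^*$ of the relevant line bundle classes on $E/B$ restricts to the corresponding classes on $E_Z/B$, since the diagram ~\eqref{eqn:Funct1} is Cartesian with $\pi$ smooth.

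Granting the intertwining identity $\psi_X \circ i_* = i_* \circ \psi_Z$, the rest is formal. By definition $\Omega^*_Z(X) = \im(i_*)$, so for any $\alpha = i_*(\beta) \in \Omega^*_Z(X)$ we have $\psi_X(\alpha) = \psi_X(i_*(\beta)) = i_*(\psi_Z(\beta)) = i_*(\beta) = \alpha$, using the hypothesis that $\psi_Z$ is the identity on $\Omega^*(Z)$. In particular $\psi_X$ maps $\Omega^*_Z(X)$ into itself, so it restricts to an endomorphism $\psi^Z_X$ of $\Omega^*_Z(X)$, the square ~\eqref{eqn:Closed1} commutes by construction (the horizontal maps are the inclusions $\Omega^*_Z(X) \inj \Omega^*(X)$ and $\psi^Z_X$ is the corestriction of $\psi_X$), and the computation just given shows $\psi^Z_X$ is the identity.

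The only genuine obstacle, then, is the first paragraph: verifying that $\psi$ commutes with pushforward along a closed immersion. Everything else is bookkeeping with the definition of $\Omega^*_Z(X)$ as an image. I would present the functoriality computation as a short displayed chain of equalities, citing Lemma~\ref{lem:Funct} for the $\phi$-square, the Cartesian diagram ~\eqref{eqn:Funct1} for $\pi_* \ov{i}_* = i_* (\pi_Z)_*$, and the projection formula for the handling of the class $c_1(p_N)$, and then deduce the statement about $\psi^Z_X$ in two lines as above.
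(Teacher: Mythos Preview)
Your proposal is correct and follows essentially the same route as the paper: both arguments establish the intertwining identity $\psi_X \circ i_* = i_* \circ \psi_Z$ and then read off the lemma from the hypothesis $\psi_Z = \id$. The only cosmetic difference is that the paper decomposes the verification into three explicit squares (base change $i'_* \pi_Z^* = \pi^* i_*$, compatibility of $i'_*$ with multiplication by $p_N$ via Lemma~\ref{lem:simple}, and functoriality of pushforward $i_* \pi_{Z*} = \pi_* i'_*$), whereas you package the first two of these into an appeal to Lemma~\ref{lem:Funct} and the projection formula; since Lemma~\ref{lem:Funct} itself was proved using Lemma~\ref{lem:simple}, the underlying content is identical.
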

\begin{proof}
We consider the following diagram.
\[
\xymatrix@C.9pc{
\Omega^*(Z) \ar[d]_{i_*} \ar[r]^{\pi^*_Z} & \Omega^*({E_Z}/B) \ar[d]^{i'_*}
\ar[r]^{\cdot p_N} & \Omega^*({E_Z}/B) \ar[d]^{i'_*} 
\ar[r]^{\ \ \ {\pi}_{Z_*}} & \Omega^*(Z) \ar[d]^{i_*} \\
\Omega^*(X) \ar[r]_{\pi^*} & \Omega^*({E}/B) 
\ar[r]_{\cdot p_N} & \Omega^*({E}/B)
\ar[r]_{\ \ \pi_*} & \Omega^*(X).}
\]   
The first square from the left clearly commutes as $\pi_Z$ is the pull-back of
a smooth morphism. The second square commutes by Lemma~\ref{lem:simple}.
The third square is simply the commutativity of the push-forward maps.
We conclude that the big outer square commutes.
Since the top and the bottom composite horizontal arrows are $\psi_Z$
and $\psi_X$ respectively, we get the commutative square ~\eqref{eqn:Closed1}.
Moreover, $\psi^Z_X$ is identity because $\psi_Z$ is so.
\end{proof}
 
%The second square also commutes by the projection formula
%for the action of first Chern classes of line bundles, 

\begin{lem}\label{lem:Main-Id}
For any scheme $X$, there is a finite filtration by closed subschemes
\[
\emptyset = X^{n+1} \subset X^n \subset \cdots \subset X^1 \subset X^0 = X
\]
such that $\psi_{U^i}$ is identity for each $0 \le i \le n$, where
$U^i = \left(X^i - X^{i+1}\right)$.
\end{lem}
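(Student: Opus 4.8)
\textbf{Proof proposal for Lemma~\ref{lem:Main-Id}.}
The plan is to build the filtration by a Noetherian induction on closed subschemes of $X$, producing at each stage a dense open subscheme on which $\psi$ is the identity. The key point to establish first is: for any reduced scheme $X$ there exists a dense open $U \subset X$ with $\psi_U = \id_{\Omega^*(U)}$. Granting this, I would argue as follows. Apply the claim to $X_{\red}$ (note $\Omega^*$ only sees the reduced structure, so $\psi_X = \psi_{X_{\red}}$) to get a dense open $U^0 \subset X$ with $\psi_{U^0} = \id$, and set $X^1 = X \setminus U^0$ with the reduced structure. Since $U^0$ is dense, $\dim X^1 < \dim X$. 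Now apply the claim to $X^1$ to get a dense open $U^1 \subset X^1$ with $\psi_{U^1} = \id$, set $X^2 = X^1 \setminus U^1$, and continue. Because $\dim X^{i+1} < \dim X^i$ and dimensions are nonnegative integers, the process terminates after finitely many steps, say $n+1 = \dim X + 1$ of them, yielding $\emptyset = X^{n+1} \subset X^n \subset \cdots \subset X^0 = X$ with $U^i = X^i \setminus X^{i+1}$ and $\psi_{U^i} = \id$ for all $i$. This is exactly the asserted filtration.

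It remains to prove the claim that $\psi$ is the identity on a dense open of any reduced $X$. First I would reduce to $X$ irreducible (hence integral) by treating each irreducible component separately: if $X = X_1 \cup \cdots \cup X_m$, remove the pairwise intersections to get disjoint dense opens, on each of which $\psi$ is computed componentwise since cobordism, pushforward, and $\phi_X$ all respect disjoint unions. So assume $X$ integral. Next, by generic smoothness in characteristic zero there is a dense open on which $X$ is smooth, so I may assume $X$ smooth, integral, and — shrinking further — that there is an \'etale cover $X' \xrightarrow{f} X$ trivializing $E$, together with (by Lemma~\ref{lem:star}) a dense open $U \hookrightarrow X$ over which $g = f|_{U'}$ satisfies $g_*(1) = [k(X'):k(X)] =: d$. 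Replacing $X$ by this $U$, I have a finite \'etale $g : X' \to X$ of degree $d$ with $E_{X'}/B \cong X' \times G/B$.

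On the trivial flag bundle $X' \times G/B \to X'$ the maps $\phi_{X'}$ and $\delta_{X'}$ are isomorphisms (Corollary~\ref{cor:finite-trivial}, \cite[Lemma~6.5]{Krishna3}, \cite[Theorem~3.1]{Levine1}), and under the identification $\Omega^*(E_{X'}/B) \cong \Omega^*(X') \otimes_{\Q} \Lambda$ the operator $\psi_{X'}$ becomes $\id \otimes (\text{mult.\ by }\rho_N \text{ then project to }\Lambda_N\text{-part via }\pi_*)$; since $\pi_*$ on the trivial bundle picks out the coefficient of the top class and $\rho_N$ generates $\Lambda_N$, a normalization of $p_N$ (absorbing the scalar from the perfect pairing of Lemma~\ref{lem:VIS}, using that $\pi_*(c_1(p_N) \cap \pi^*(1)) $ is a nonzero scalar since $\rho_0 \cdot \rho_N = \rho_N \neq 0$) makes $\psi_{X'} = \id$. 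Now transfer to $X$: by Corollary~\ref{cor:Retract} the squares relating $\psi_X$, $\psi_{X'}$ via $g^*$, $g_*$ commute and $g_* \circ g^* = d \cdot \id$, while Lemma~\ref{lem:Funct} gives $g_* \circ \psi_{X'} \circ g^* = \psi_X \circ g_* \circ g^* = d\,\psi_X$ and also $= g_* \circ g^* \circ \psi_X$... more directly, $\psi$ commutes with $g^*$ and $g_*$ (both being pullback along smooth / pushforward along projective maps, to which Lemma~\ref{lem:simple} and the base-change compatibilities apply), so $d\,\psi_X = g_* g^* \psi_X = g_* \psi_{X'} g^* = g_* g^* = d\,\id$, and dividing by $d$ (legitimate over $\Q$) gives $\psi_X = \id$. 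The main obstacle I anticipate is the normalization of $p_N$ and the verification that $\psi_{X'}$ is genuinely the identity on the trivial bundle — i.e., pinning down that $\pi_*$ composed with multiplication by the top lift $p_N$ acts as the identity after rescaling, which requires carefully matching the pushforward on $\Omega^*$ of the trivial $G/B$-bundle with the perfect pairing $\Lambda_d \otimes \Lambda_{N-d} \to \Lambda_N$ from Lemma~\ref{lem:VIS} and checking this is compatible with the $\Omega^*(k)$-module structure.
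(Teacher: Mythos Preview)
Your proposal is correct and follows essentially the paper's route: induct on dimension, pass to a dense smooth open over which the bundle trivializes after a finite \'etale cover, verify $\psi = \id$ on the cover via the trivial-bundle computation, and descend (the paper descends by observing that $g^*$ is injective, which is equivalent to your $g_*g^* = d$ argument). For the normalization obstacle you flag, the paper resolves it by first treating $X = \Spec(k)$ and noting that $\Omega^N(G/B) = \Omega_0(G/B)$ is one-dimensional with $\pi_*$ sending the class of a closed point to $[\Spec(k)] = 1 \in \bL^0$, so choosing $\rho_N$ to correspond to a point class gives $\pi_*(\phi_k(1\otimes\rho_N)) = 1$ exactly; the smooth trivial case then follows from this and Corollary~\ref{cor:finite-trivial}.
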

\begin{proof}
We prove this by induction on the dimension of $X$.  
If $X$ is zero-dimensional, we can use Corollary~\ref{cor:Retract} and the 
argument in the proof of Proposition~\ref{prop:Surj} to reduce to the case when
$X = {\rm Spec}(k)$ and $E/B = G/B$. We can apply 
Corollary~\ref{cor:Finite} and \cite[Theorem~7.6]{Krishna3} to get a graded
$\bL$-algebra isomorphism
\begin{equation}\label{eqn:M-one}
\phi_k: \bL \otimes_{\Q} \Lambda \xrightarrow{\cong} \Omega^*(G/B).
\end{equation}
Let $S_{\bL} \xrightarrow{\eta} \Lambda_{\bL}$ denote the quotient map.  
Under the above isomorphism, we get for any homogeneous element $m \in \bL^i$,
\[
\psi_k(m) = \pi_* \circ \phi_k \circ \eta
\left(m \otimes p_N\right) =
\pi_* \circ \phi_k \left(m \otimes \rho_N\right) = 
\pi_*\left(\rho_N \cdot \pi^*(m)\right) = 
\pi_*\left(1 \otimes \rho_N\right) \cdot m,
\]
where the last equality holds by the projection formula.
On the other hand, as $\bL^0$ is the one-dimensional vector space 
generated by the class of $[{\rm Spec}(k)]$ , we see that
$\Omega^N(G/B) \cong \bL^0 \otimes \Lambda_N 
\xrightarrow{\cong} \Q[1 \otimes \rho_N]$ and 
$\pi_*(1 \otimes p_N)$ is simply the class of $[{\rm Spec}(k)]$ in $\bL^0$,
which is the identity element. This proves the zero-dimensional case.
If $X$ is any smooth scheme and $Y =
X \times G/B$, then it follows from Corollary~\ref{cor:finite-trivial} and 
the case of $X = {\rm Spec}(k)$ that $\psi_X$ is identity.

In the general case, we can find an \'etale cover $f : X' \to X$ such that
$E$ is trivial over $X$. Hence $E/B \times_X X' \xrightarrow{\cong}
E/B \times X'$ and $\pi_{X'}$ is just the projection map by 
Lemma~\ref{lem:trivial-flag}. Since $f$ is generically finite, we can find
a dense open subset $U \stackrel{j}{\inj} X$ such that $U$ is smooth and
the map $g: U' = f^{-1}(U) \to U$ is finite and \'etale. The proof of
Lemma~\ref{lem:Closed} shows that the right square in the diagram
\begin{equation}\label{eqn:Main-Id1}
\xymatrix@C.7pc{
\Omega^*(U) \ar[r]^{g^*} \ar[d]_{\psi_U} & \Omega^*(U') \ar[r]^{g_*}
\ar[d]^{\psi_{U'}} & \Omega^*(U) \ar[d]^{\psi_U} \\
\Omega^*(U) \ar[r]_{g^*} & \Omega^*(U') \ar[r]_{g_*}
& \Omega^*(U)}
\end{equation}
commutes. Since $g$ is \'etale, the similar argument shows that the left
square commutes. Furthermore, we can apply Corollary~\ref{lem:star} to
choose the open subset $U$ so that $g^*$ is injective.
We have shown above that $\psi_{U'}$ is identity.
We conclude that $\psi_U$ must also be identity.

Put $X^1 = (X-U)$. Then $X^1$ is a closed subscheme of $X$ of dimension
which is strictly less than that of $X$. The proof of the lemma now follows
by induction.
\end{proof}  
\begin{prop}\label{prop:MAIN*}
For any scheme $X$, the homomorphism $\psi_X$ is an isomorphism.
\end{prop}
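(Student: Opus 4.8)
The plan is to leverage the finite filtration $\emptyset = X^{n+1} \subset X^n \subset \cdots \subset X^0 = X$ of Lemma~\ref{lem:Main-Id}, on each locally closed stratum $U^i = X^i - X^{i+1}$ of which $\psi$ acts as the identity, in order to build a finite filtration of $\Omega^*(X)$ that is stabilized by $\psi_X$ and on whose associated graded pieces $\psi_X$ acts as the identity; it will then follow formally that $\psi_X - \id$ is nilpotent, and hence that $\psi_X$ is an automorphism.

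First I would record two functoriality properties of the operator $\psi$. For a closed immersion $\iota : Z \hookrightarrow X$ one has $\iota_* \circ \psi_Z = \psi_X \circ \iota_*$: this is exactly the commutativity of the big outer square established in the proof of Lemma~\ref{lem:Closed}, where the hypothesis $\psi_Z = \id$ intervenes only in the concluding sentence. For an open immersion $j : U \hookrightarrow X$ one has $j^* \circ \psi_X = \psi_U \circ j^*$, since each of the three operations composing $\psi$ --- the smooth pull-back $\pi^*$, the Chern-class operator $c_1(p_N) \cap -$, and the projective push-forward $\pi_*$ --- is compatible with flat base change along the Cartesian square obtained from $E/B \to X$; here one uses that $\pi$ is smooth and projective, so the squares in question are transverse and projective push-forward commutes with smooth pull-back along them.

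Next, put $\Omega^*_{X^i}(X) := \operatorname{im}\bigl(\Omega^*(X^i) \xrightarrow{\iota_{i*}} \Omega^*(X)\bigr)$, giving a finite filtration $0 = \Omega^*_{X^{n+1}}(X) \subseteq \cdots \subseteq \Omega^*_{X^0}(X) = \Omega^*(X)$ which $\psi_X$ preserves by the first functoriality. To identify the endomorphism of $\operatorname{gr}_i := \Omega^*_{X^i}(X)/\Omega^*_{X^{i+1}}(X)$ induced by $\psi_X$, compose push-forward with the quotient map to get a surjection $\Omega^*(X^i) \twoheadrightarrow \operatorname{gr}_i$; because the composite $X^{i+1} \hookrightarrow X^i \hookrightarrow X$ is the inclusion $X^{i+1} \hookrightarrow X$, this surjection annihilates the image of $\Omega^*(X^{i+1})$, hence factors through the restriction $\Omega^*(X^i) \twoheadrightarrow \Omega^*(U^i)$ appearing in the localization sequence of Theorem~\ref{thm:Levine-M} (using exactness at $\Omega^*(X^i)$), yielding a surjection $\Omega^*(U^i) \twoheadrightarrow \operatorname{gr}_i$. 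Combining the two functorialities, this last surjection intertwines $\psi_{U^i}$ with the induced endomorphism of $\operatorname{gr}_i$; since $\psi_{U^i} = \id$ by Lemma~\ref{lem:Main-Id}, that endomorphism is the identity, for every $i$.

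Finally, identity on every $\operatorname{gr}_i$ means $(\psi_X - \id)\bigl(\Omega^*_{X^i}(X)\bigr) \subseteq \Omega^*_{X^{i+1}}(X)$, whence $(\psi_X - \id)^{n+1} = 0$ on $\Omega^*(X)$; therefore $\psi_X$ is an automorphism with inverse $\sum_{j=0}^{n} (-1)^j (\psi_X - \id)^j$. The step I expect to require the most care is the open-restriction compatibility $j^* \psi_X = \psi_U j^*$, i.e. the base-change identities for $\pi^*$, $c_1(p_N) \cap -$ and $\pi_*$ separately: these are standard for the oriented Borel-Moore theory $\Omega_*$ (projective push-forward commutes with smooth pull-back in transverse Cartesian squares), but they must be handled without any loss of exactness, since --- unlike in the surjectivity argument of Proposition~\ref{prop:Surj} --- only the right-exact localization sequence is available. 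Everything else reduces to routine diagram chasing.
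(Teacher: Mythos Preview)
Your argument is correct, and it uses the same basic ingredients as the paper's proof --- the stratification of Lemma~\ref{lem:Main-Id} together with the compatibilities $\iota_*\psi_Z=\psi_X\iota_*$ and $j^*\psi_X=\psi_Uj^*$ --- but organizes them differently. The paper runs an induction on the \emph{open} subsets $X\setminus X^m$: for each $m$ it applies the localization sequence to the closed pair $(X^m\setminus X^{m+1})\subset (X\setminus X^{m+1})$, obtaining the short exact sequence~\eqref{eqn:Main*1}, and then uses the five-lemma with the left vertical map equal to the identity (via Lemma~\ref{lem:Closed}) and the right vertical map an isomorphism by induction. You instead filter $\Omega^*(X)$ itself by the images $\Omega^*_{X^i}(X)$, identify the induced endomorphism on each graded piece as the identity by factoring through $\Omega^*(U^i)$, and conclude that $\psi_X-\id$ is nilpotent. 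Your route yields the slightly sharper statement that $\psi_X$ is unipotent, with an explicit inverse; the paper's route is a touch more economical because Lemma~\ref{lem:Closed} already packages the ``identity on the subobject'' statement needed for the five-lemma, so no separate analysis of graded pieces is required. Note also that the right square in~\eqref{eqn:Main*1} needs exactly the open-restriction compatibility $j^*\psi=\psi j^*$ that you singled out; the paper attributes the commutativity of the whole diagram to Lemma~\ref{lem:Closed}, but that lemma only handles the left square, so your explicit treatment of this point is a genuine clarification.
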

\begin{proof}
We choose a finite filtration of $X$ as in Lemma~\ref{lem:Main-Id}.
We have shown that $\psi_{(X - X^1)}$ is identity. Assume by induction that
$\psi_{(X - X^m)}$ is an isomorphism and consider the diagram
\begin{equation}\label{eqn:Main*1}
\xymatrix@C.5pc{
0 \ar[r] & \Omega^*_{(X^m - X^{m+1})}\left(X - X^{m+1}\right) \ar[r] 
\ar[d]_{\psi^{(X^m - X^{m+1})}_{(X - X^{m+1})}} &
\Omega^*(X - X^{m+1}) \ar[r] \ar[d]^{\psi_{(X - X^{m+1})}} &
\Omega^*(X - X^m) \ar[d]^{\psi_{(X - X^m)}} \ar[r] & 0 \\
0 \ar[r] & \Omega^*_{(X^m - X^{m+1})}\left(X - X^{m+1}\right) \ar[r] &
\Omega^*(X - X^{m+1}) \ar[r] & \Omega^*(X - X^m) \ar[r] & 0}
\end{equation}
which is commutative by Lemma~\ref{lem:Closed}.
The top and the bottom rows are exact by Theorem~\ref{thm:Levine-M}.
The left vertical map is identity by Lemmas~~\ref{lem:Closed} and
\ref{lem:Main-Id}. The right
vertical map is isomorphism by induction. We conclude that the middle
vertical map is an isomorphism too and the induction continues.
\end{proof}
\begin{cor}\label{cor:MAIN-INJ}
For any scheme $X$, the pull-back map $\Omega^*(X) \xrightarrow{\pi^*}
\Omega^*(E/B)$ is injective.
\end{cor}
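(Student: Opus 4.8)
The plan is simply to read off the injectivity of $\pi^*$ from the factorization of the map $\psi_X$ through it. Recall from~\eqref{eqn:PSI} that for any scheme $X$ the endomorphism $\psi_X$ of $\Omega^*(X)$ is defined as the composite
\[
\Omega^*(X) \xrightarrow{\pi^*} \Omega^*(E/B) \xrightarrow{c_1(p_N)\cap -} \Omega^*(E/B) \xrightarrow{\pi_*} \Omega^*(X),
\]
so that $\psi_X = \pi_* \circ \bigl(c_1(p_N)\cap \pi^*(-)\bigr)$. In particular $\psi_X$ factors through the pull-back map $\pi^*$.

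By Proposition~\ref{prop:MAIN*}, $\psi_X$ is an isomorphism; in particular it is injective. Since a composite of maps can be injective only if its first map is injective, it follows immediately that $\pi^* : \Omega^*(X) \to \Omega^*(E/B)$ is injective. This completes the argument.

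There is no real obstacle remaining here: all the work has already been done in establishing Proposition~\ref{prop:MAIN*} (which in turn rested on Lemmas~\ref{lem:Closed} and~\ref{lem:Main-Id} and the localization sequence of Theorem~\ref{thm:Levine-M}). One could alternatively phrase the conclusion by saying that $\psi_X^{-1}\circ \pi_*\circ\bigl(c_1(p_N)\cap -\bigr)$ is a left inverse to $\pi^*$, which is a one-line check and makes the splitting explicit; but the factorization argument above already suffices. The only point to be a little careful about is that this uses $\psi_X$ for the given $X$ only, so no smoothness or other hypothesis on $X$ is needed, consistent with the statement of the corollary.
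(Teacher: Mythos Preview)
Your argument is correct and is precisely the intended one: the paper's proof is the single line ``This follows directly from Proposition~\ref{prop:MAIN*},'' and you have simply unpacked that line by noting that $\psi_X$ factors as $\pi_*\circ(c_1(p_N)\cap -)\circ\pi^*$, so the injectivity of $\psi_X$ forces the injectivity of $\pi^*$.
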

\begin{proof}
This follows directly from Proposition~\ref{prop:MAIN*}.
\end{proof}
\begin{prop}\label{prop:Trivial-ISO}
Let $X \times G/B \xrightarrow{\pi} X$ be the trivial flag bundle.
Then the map $\lambda_X$ is an isomorphism.
\end{prop}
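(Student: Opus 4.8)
The plan is to prove that $\lambda_X$ is both surjective and injective. Surjectivity is already available: Proposition~\ref{prop:Surj} gives it for an arbitrary principal bundle, in particular for the trivial one (for smooth $X$ the full statement is of course already Corollary~\ref{cor:finite-trivial}, so the new content is the case of arbitrary, possibly singular, $X$). Thus the real work is injectivity, which I would establish by producing, with the help of the Poincar\'e-type duality of Lemma~\ref{lem:VIS}, an explicit retraction of $\lambda_X$ \emph{up to} the isomorphism $\psi_X$ of Proposition~\ref{prop:MAIN*}. Throughout I use Corollary~\ref{cor:finite-trivial}: for the trivial bundle $\lambda_X$ factors as $\Omega^*(X)\otimes_\Q\Lambda\xrightarrow{\ \sim\ }\Omega^*(X)\otimes_\bL\Lambda_\bL\xrightarrow{\ \lambda_X\ }\Omega^*(X\times G/B)$, so that for any $p\in S$ the operator $c_1(p)\cap\pi^*(-)$ on $\Omega^*(X\times G/B)$ depends only on the class $\bar p\in\Lambda=S/I$, and $\lambda_X(w\otimes\bar p)=c_1(p)\cap\pi^*(w)$.

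Next, fix a homogeneous $\Q$-basis $\rho_1,\dots,\rho_m$ of $\Lambda$, with $\rho_i\in\Lambda_{d_i}$. By Lemma~\ref{lem:VIS} the multiplication pairing $\Lambda_d\otimes\Lambda_{N-d}\to\Lambda_N$ is perfect, with $N=\dim(G/B)$, so for each $j$ there is a unique $\tau_j\in\Lambda_{N-d_j}$ with $\rho_i\tau_j=\delta_{ij}\rho_N$ for all $i$ with $d_i=d_j$. Choose homogeneous lifts $p_i\in S_{d_i}$ of $\rho_i$ and $q_j\in S_{N-d_j}$ of $\tau_j$, and keep the lift $p_N\in S_N$ of $\rho_N$ already fixed above. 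Define additive operators $\theta_j\colon\Omega^*(X\times G/B)\to\Omega^*(X)$ by $\theta_j(\zeta)=\pi_*\!\left(c_1(q_j)\cap\zeta\right)$ and set $\mu_X(\zeta)=\sum_{j}\theta_j(\zeta)\otimes\rho_j\in\Omega^*(X)\otimes_\Q\Lambda$.

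Then I would compute $\mu_X\circ\lambda_X$. Since the $C(T)$-module structure on $\Omega^*(X\times G/B)$ gives $c_1(q_j)\cap\bigl(c_1(p_i)\cap-\bigr)=c_1(q_jp_i)\cap-$, and since $c_1(q_jp_i)\cap\pi^*(-)$ depends only on $\overline{q_jp_i}=\tau_j\rho_i\in\Lambda_{(N-d_j)+d_i}$, the $\rho_j$-component of $\mu_X(\lambda_X(w\otimes\rho_i))$ equals $\pi_*\!\left(c_1(q_jp_i)\cap\pi^*(w)\right)$ and: (i) it vanishes when $d_i>d_j$, because then $(N-d_j)+d_i>N$ forces $\tau_j\rho_i=0$; and (ii) when $d_i=d_j$ it equals $\delta_{ij}\,\pi_*\!\left(c_1(p_N)\cap\pi^*(w)\right)=\delta_{ij}\,\psi_X(w)$, using $\tau_j\rho_i=\delta_{ij}\rho_N$ and replacing any lift of $\rho_N$ by $p_N$. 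Hence $\mu_X\circ\lambda_X$ preserves the finite decreasing filtration $F^{\ge d}=\bigoplus_{d_i\ge d}\Omega^*(X)\otimes\rho_i$ of $\Omega^*(X)\otimes_\Q\Lambda$ and induces $\psi_X\otimes\mathrm{id}_{\Lambda_d}$ on the graded piece $\Omega^*(X)\otimes_\Q\Lambda_d$.

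Finally, since $\psi_X$ is an isomorphism by Proposition~\ref{prop:MAIN*} (valid for any scheme $X$ and any principal bundle, in particular in the singular case and for the trivial bundle), and $-\otimes_\Q\Lambda_d$ is exact, the associated graded of $\mu_X\circ\lambda_X$ consists of isomorphisms; as the filtration is finite, $\mu_X\circ\lambda_X$ is itself an isomorphism. Therefore $\lambda_X$ is injective, and with Proposition~\ref{prop:Surj} it is an isomorphism. I expect the one genuinely non-formal point to be this: unlike in Vistoli's Chow-group argument, the ``wrong-way'' operators $\pi_*\!\left(c_1(p_k)\cap\pi^*(-)\right)$ with $\deg p_k<N$ do \emph{not} vanish here (already $\pi_*\pi^*\neq 0$, e.g.\ $[X\times\P^1\to X]\neq 0$ in $\Omega_*(X)$), so $\mu_X$ is \emph{not} literally a section of $\lambda_X$; one must instead observe that these corrections strictly raise the $\Lambda$-grading, so $\mu_X\circ\lambda_X$ is triangular with invertible diagonal. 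A minor but essential point is to invoke the ``$\psi_X$ is an isomorphism'' form of Proposition~\ref{prop:MAIN*}, not merely ``$\psi_X=\mathrm{id}$ on the cellular strata''.
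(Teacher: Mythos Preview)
Your proof is correct and follows essentially the same approach as the paper: both use Corollary~\ref{cor:finite-trivial} to identify the source with $\Omega^*(X)\otimes_\Q\Lambda$, then exploit the Poincar\'e duality of Lemma~\ref{lem:VIS} together with the isomorphism $\psi_X$ from Proposition~\ref{prop:MAIN*} to prove injectivity. The only difference is in packaging---the paper argues by induction on the $\Lambda$-degree, showing directly that $\phi_X(x)=0$ forces each homogeneous component $x_p$ to vanish, whereas you assemble the same dual operators into a single map $\mu_X$ and phrase the induction as a triangularity/filtration argument; the underlying computation is identical.
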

\begin{proof}
The surjectivity of $\lambda_X$ and $\delta_X$ follows directly from
Proposition~\ref{prop:Surj} and Remark~\ref{remk:Surj-General}.
So we only need to show the injectivity.
By Remark~\ref{remk:Surj-General} again, it suffices to show that $\phi_X$ is
injective (and hence isomorphism). 
By Corollary~\ref{cor:finite-trivial}, $\phi_X$ is same
as the map of graded $\Lambda_{\bL}$-modules
\begin{equation}\label{eqn:TR-ISO1}
\Omega^*(X) \otimes_{\Q} \Lambda \xrightarrow{\phi_X} \Omega^*(E/B) 
\end{equation}
and this map factorizes the composite map $\theta_X$ as
\begin{equation}\label{eqn:TR-ISO2}
\theta_X : \Omega^*(X) \otimes_{\bL} S_{\bL} \stackrel{\eta_X}{\surj}
\Omega^*(X) \otimes_{\bL} \Lambda_{\bL} \xrightarrow{\phi_X} \Omega^*(E/B).
\end{equation}
 
It suffices to show that $\phi_X$ is injective on each graded piece of the 
left term in ~\eqref{eqn:TR-ISO1}.
So let 
\[
x \in {\left(\Omega^*(X) \otimes_{\Q} \Lambda\right)}^m =
\stackrel{N}{\underset{p = 0}\oplus} \Omega^{m-p} \otimes_{\Q} \Lambda_p.
\] be such that $\phi_X(x) = 0$.

Let $\{b^p_1, \cdots , b^p_{s_p}\}$ be a chosen $\Q$-basis of $\Lambda_p$ and 
let $\{u^p_1, \cdots , u^p_{s_p}\}$ their homogeneous lifts in $S$. 
We have $b^0_1 = \rho_0 =1$ and $b^N_1 = \rho_N$. Similarly, $u^0_1 = p_0 = 1$
and $u^N_1 = p_N$. We can then write $x$ uniquely as
\[
x = \stackrel{N}{\underset{p = 0}\sum} x_p, \ \ {\rm where} \ \
x_p = \stackrel{s_p}{\underset{i = 1}\sum} x^p_i \otimes b^p_i.
\]
We show inductively that $x_p = 0$ for each $p$. First of all,
\[
\phi_X(x) = 0 \Rightarrow p_N \cdot \phi_X(x) = 0 \Rightarrow 
\theta_X(p_N \cdot x) = 0 \Rightarrow \phi_X (\rho_N \cdot x) = 0.
\]
On the other hand, we have by Lemma~\ref{lem:VIS},
\[
\rho_N \cdot x_p = \stackrel{s_p}{\underset{i = 1}\sum} 
x^p_i \otimes (b^p_i \cdot \rho_N)
\]
which is zero for $p > 0$ and $x^0_1 \otimes \rho_N$ for $p =0$.
Thus we conclude that 
\[
\phi_X(x) = 0 \Rightarrow \phi_X(x^0_1 \otimes \rho_N) = 0 \Rightarrow
\pi_* \circ \phi_X(x^0_1 \otimes \rho_N) = 0.
\]
But the last equality is equivalent to saying that $\psi_X(x^0_1) = 0$ and
this implies that $x^0_1 = 0$ by Proposition~\ref{prop:MAIN*}.
In particular, $x_0 = x^0_1 \otimes b^0_1 = 0$.

We assume by induction that $x_q = 0$ for $q < p$ with $p \ge 1$ and
we show that $x_p$ is zero too. We prove this following the proof of
\cite{Vistoli} for the Chow groups. We fix an integer $1 \le l \le s_p$.
By Lemma~\ref{lem:VIS}, there exists $c \in \Lambda_{N-p}$ such that
\begin{eqnarray*}
b^p_i \cdot c = & \tuborg  0 \ {\rm if} \ i \neq l \\
\rho_N \ {\rm if} \ i = l.
\sluttuborg
\end{eqnarray*}
In particular, we get 
\[
\begin{array}{lll}
c \cdot x & = & \stackrel{N}{\underset{j= p}\sum} \
\stackrel{s_j}{\underset{i= 1}\sum} x^j_i \otimes (b^j_i \cdot c) \\
& = & \stackrel{s_p}{\underset{i= 1}\sum} x^p_i \otimes (b^p_i \cdot c) \\
& = & x^p_l \otimes \rho_N, 
\end{array}
\]
where the second equality occurs because $j+ (N-p) > N$ for $j > p$.
We lift $c$ to a homogeneous element $e$ of $S$. We then have as before,
\begin{equation}\label{eqn:VIS*1}
\phi_X(x) = 0 \Rightarrow e \phi_X(x) = 0 \Rightarrow
\theta_X(e \cdot x) = 0 \Rightarrow \phi_X (c \cdot x) = 0.
\end{equation}
We conclude from this that 
$\phi_X(x) = 0 \Rightarrow \phi_X(x^p_l \otimes \rho_N) = 0$, which in turn
implies that
\[
\psi_X(x^p_l) = \pi_* \circ \phi_X \left(c_1(p_N) \cap \pi^*(x^p_l)\right)
= \pi_* \circ \phi_X(x^p_l \otimes \rho_N) = 0.
\]
It follows from Proposition~\ref{prop:MAIN*} that $x^p_l = 0$ and the
induction continues to show that $x_p = 0$. This completes the proof.
\end{proof}
\noindent
{\bf {Proof of Theorem~\ref{thm:Cob-Main}:}} 
The surjectivity of $\lambda_X$ follows from Proposition~\ref{prop:Surj}.
Furthermore, the map $\delta_X$ is also surjective by 
Remark~\ref{remk:Surj-General}. Thus we only need to show that $\phi_X$ is
injective.

Put $Y = E/B$. It follows from Lemma~\ref{lem:trivial-flag} that the pull-back
$Y \times_X Y \to Y$ is the trivial flag bundle $G/B \times Y 
\xrightarrow{{\pi}'} Y$. We now consider the diagram
\begin{equation}\label{eqn:CBM1}
\xymatrix{
\Omega^*(X) \otimes_{S^W_{\bL}} S_{\bL} \ar[r]^{\ \ \phi_X} 
\ar[d]_{\pi^* \otimes 
{\rm Id}} & \Omega^*(E/B) \ar[d]^{{\pi}'^*} \\
\Omega^*(Y) \otimes_{S^W_{\bL}} S_{\bL} \ar[r]_{\phi_Y} & \Omega^*(G/B \times
Y)}
\end{equation}
which is commutative by Lemma~\ref{lem:Funct}.
Since $S_{\bL}$ is flat over $S^W_{\bL}$, the left vertical arrow is injective
by Corollary~\ref{cor:MAIN-INJ}. The bottom horizontal arrow is injective by
Proposition~\ref{prop:Trivial-ISO}. We conclude that the top horizontal
arrow is injective too.
$\hfil \square$
\begin{cor}\label{cor:Cob-Group}
Let $G$ be a connected linear algebraic group with a split maximal torus $T$.
If $E \xrightarrow{p} X$ is a principal $G$-bundle over a scheme $X$, then
\[
\Omega^*(E) \xrightarrow{\cong}
\Omega^*(X) \otimes_{C(G)} \bL \cong \frac{\Omega^*(X)}{I\Omega^*(X)},
\]
where $I$ is the ideal of $C(G)$ generated by the Chern classes of
$G$-homogeneous line bundles. This is an $\bL$-algebra isomorphism if $X$ is
smooth.
\end{cor}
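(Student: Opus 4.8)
The strategy is to derive this from \thmref{thm:Cob-Main} by ``killing the torus variables'' through the localization sequence. First I would reduce to the case where $G$ is reductive: the quotient map $E\to E/G^u$ is a torsor under the unipotent radical $G^u$, which has a finite filtration by normal subgroups with vector-group quotients, so iterated homotopy invariance (\thmref{thm:Basic}) gives $\Omega^*(E)\xrightarrow{\cong}\Omega^*(E/G^u)$; moreover $E/G^u\to X$ is a principal $L$-bundle for the reductive quotient $L=G/G^u$, with $C(G)\cong C(L)$ under the Levi decomposition. Hence we may assume $G$ is reductive with split maximal torus $T$ of rank $r$.

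Since $G$ acts freely on $E$, so does $T$, and therefore $E\to E/T$ is a principal $T$-bundle; fixing the basis $\chi_1,\dots,\chi_r$ of $\wh{T}$ identifies it with an iterated $\G_m$-torsor having associated line bundles $M_1,\dots,M_r$ on $E/T$, where $M_i$ corresponds to $\chi_i$. For a single $\G_m$-torsor $W\to Y$ with associated line bundle $M$, the total space $W$ is the complement of the zero section $Y\inj M$; the localization sequence of \thmref{thm:Levine-M}, together with homotopy invariance $\Omega^*(M)\cong\Omega^*(Y)$ and the self-intersection formula (the normal bundle of the zero section is $M$), identifies the image of the push-forward $\Omega^*(Y)\to\Omega^*(M)$ with the principal ideal generated by $c_1(M)$, yielding $\Omega^*(W)\cong\Omega^*(Y)/c_1(M)\Omega^*(Y)$. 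Applying this $r$ times gives $\Omega^*(E)\cong\Omega^*(E/T)\big/\bigl(c_1(M_1),\dots,c_1(M_r)\bigr)$.

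Next I would feed in \thmref{thm:Cob-Main}. Recall from \secref{section:prelim} that $\Omega^*(E/B)\xrightarrow{\cong}\Omega^*(E/T)$, a pull-back map, hence a ring map compatible with Chern classes of line bundles; since $M_i$ is the pull-back of the line bundle $L_{\chi_i}$ on $E/B$ whose class is $\alpha_E(x_i)$, the elements $c_1(M_i)$ correspond to the power-series variables $x_i$ under the isomorphism $\Omega^*(E/B)\cong\Omega^*(X)\otimes_{C(G)}C(T)$ of \thmref{thm:Cob-Main}, where $C(T)=\bL[[x_1,\dots,x_r]]$. As this is an isomorphism of $C(T)$-modules, passing to the quotient by the $C(T)$-ideal $(x_1,\dots,x_r)$ commutes with the base change, so
\[
\Omega^*(E)\cong\bigl(\Omega^*(X)\otimes_{C(G)}C(T)\bigr)\big/(x_1,\dots,x_r)
\cong\Omega^*(X)\otimes_{C(G)}\bigl(C(T)/(x_1,\dots,x_r)\bigr)
\cong\Omega^*(X)\otimes_{C(G)}\bL .
\]
Finally, $\bL=C(G)/I$ for the augmentation ideal $I=(\sigma_1,\dots,\sigma_r)$ (the ideal $I$ appearing in \corref{cor:Finite}, generated by the first Chern classes of the $G$-homogeneous line bundles), so $\Omega^*(X)\otimes_{C(G)}\bL=\Omega^*(X)/I\Omega^*(X)$. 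When $X$ is smooth every isomorphism used above is an $\bL$-algebra map: homotopy invariance and the flag-bundle isomorphism are ring maps, and by the self-intersection formula the cokernel of a zero-section push-forward is the quotient by a principal ideal; hence the composite is an $\bL$-algebra isomorphism.

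The step needing the most care is the $\G_m$-torsor computation: identifying the image of the zero-section push-forward with the principal ideal $c_1(M)\Omega^*(Y)$ (a self-intersection/excess-intersection computation in the Levine--Morel theory) and then tracking the line bundles $M_i$ faithfully through the chain $\Omega^*(E)\cong\Omega^*(E/T)\cong\Omega^*(E/B)\cong\Omega^*(X)\otimes_{C(G)}C(T)$ so that they become exactly the variables $x_i$. Once this bookkeeping is settled, the remainder is formal manipulation of tensor products together with \corref{cor:Finite}.
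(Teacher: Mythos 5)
Your proof is correct and arrives at the same final manipulation with \thmref{thm:Cob-Main} and \corref{cor:Finite}, but it takes a genuinely different route on the key intermediate step. The paper simply invokes Theorem~7.4 of \cite{Krishna3}, which asserts that for a free $T$-action the natural map $\Omega^*_T(E)\otimes_{C(T)}\bL\to\Omega^*(E)$ is an isomorphism, and then identifies $\Omega^*_T(E)\cong\Omega^*(E/T)\cong\Omega^*(E/B)$ by free-quotient and homotopy invariance, after which \thmref{thm:Cob-Main} finishes the job in two lines. You instead re-derive the content of that citation from scratch: filtering $T=\G_m^r$ and analyzing each $\G_m$-torsor as the complement of the zero section of a line bundle, you get $\Omega^*(E)\cong\Omega^*(E/T)/(c_1(M_1),\dots,c_1(M_r))$ from the localization sequence, homotopy invariance, and the fact that the zero-section push-forward composed with $(p^*)^{-1}$ is the Chern class operator $\tilde{c}_1(L)$. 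Your version is longer but more self-contained, and has the pedagogical virtue of making visible why the comparison holds for arbitrary (possibly singular) $X$: the Chern class operator and the localization sequence are defined on all of $\Omega_*$, not just on smooth schemes. One small bookkeeping point you gloss over — and which is worth spelling out if you write this up — is that the line bundle attached to the $i$-th $\G_m$-torsor lives on the intermediate quotient $E/T_i$ rather than on $E/T$, and one needs to check that it is the restriction of $M_i$ so that, after collapsing the tower, the resulting ideal in $\Omega^*(E/T)$ is generated by $c_1(M_1),\dots,c_1(M_r)$ as you claim; this follows because $\chi_i$ is trivial on $T_{i-1}$, but it deserves a sentence. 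The preliminary reduction to reductive $G$ is also fine (and implicit in the paper since it was made in Section~3), as is the identification of the $c_1(M_i)$ with the $x_i$ under \thmref{thm:Cob-Main} via the explicit module structure in~\eqref{eqn:Module-S}.
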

\begin{proof}
By \cite[Theorem~7.4]{Krishna3}, the natural map
\[
\Omega^*_T(E) \otimes_{C(T)} \bL \to \Omega^*(E)
\]
is an isomorphism, which is an $\bL$-algebra isomorphism if $X$ is smooth.
On the other hand, it follows from Theorem~\ref{thm:Basic} that
the term on the left is same as $\Omega^*(E/B) \otimes_{C(T)} \bL$.
The corollary now follows from Theorem~\ref{thm:Cob-Main}.
\end{proof}
\begin{cor}\label{cor:Cob-Group*}
Let $G$ be a connected algebraic group (not necessarily linear) over $k$ and
let $G \xrightarrow{\alpha_G} A(G)$ be the albanese morphism of $G$.
Then there is an $\bL$-algebra isomorphism
\[
\Omega^*(G) \cong \frac{\Omega^*(A(G))}{I\Omega^*(A(G))}.
\]
\end{cor}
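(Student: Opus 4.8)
\smallskip
\noindent\textbf{Proof proposal.}
The plan is to exhibit the Albanese morphism $\alpha_G$ as a principal bundle under a connected linear algebraic group and then to quote \corref{cor:Cob-Group}. Since $\Char(k)=0$, Chevalley's structure theorem gives a canonical short exact sequence of algebraic groups
\[
1 \longrightarrow H \longrightarrow G \xrightarrow{\ q\ } A \longrightarrow 1,
\]
with $H$ a connected linear algebraic group and $A$ an abelian variety. The first step is to identify $A$ with $A(G)$ and $q$ with $\alpha_G$. On the one hand, $q$ is a morphism from $G$ to an abelian variety, so it factors through $\alpha_G$. On the other hand, the restriction $\alpha_G|_H \colon H \to A(G)$ has image a closed subgroup of $A(G)$ which is a quotient of the affine group $H$ — hence affine — and also proper, hence finite, hence (being connected) trivial; therefore $\alpha_G$ kills $H$ and factors through $q$. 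As $q$ and $\alpha_G$ are both surjective homomorphisms, the two factorizations are mutually inverse, so $A(G)\cong A$ and $\alpha_G=q$ under this identification.

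Next I would observe that $q \colon G \to A$ is a principal $H$-bundle in the sense used in this paper: it is an fppf $H$-torsor, hence — as $H$ is smooth in characteristic zero — \'etale-locally trivial, and both $G$ and $A$ are quasi-projective. Applying \corref{cor:Cob-Group} to this bundle — invoking, as noted in the introduction, the transfer argument which shows that \corref{cor:Cob-Group} remains valid for a non-split maximal torus once one works with rational coefficients — yields
\[
\Omega^*(G) \xrightarrow{\ \cong\ } \Omega^*(A(G)) \otimes_{C(H)} \bL \ \cong\ \frac{\Omega^*(A(G))}{I\,\Omega^*(A(G))},
\]
where $I \subset C(H)$ is the ideal generated by the Chern classes of the $H$-homogeneous line bundles, acting on $\Omega^*(A(G))$ through its $C(H)$-module structure. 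Since $A(G)$ is smooth, \corref{cor:Cob-Group} also gives that this map is an isomorphism of $\bL$-algebras, which is precisely the assertion.

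All of the cobordism-theoretic content is already contained in \corref{cor:Cob-Group} (hence in \thmref{thm:Cob-Main}), so the proof is short; the remaining work is the classical structure theory above. Accordingly, the only places I expect to need care are the two bookkeeping checks: that $q \colon G \to A(G)$ genuinely meets the paper's definition of a principal bundle (\'etale-local triviality and quasi-projectivity of the quotient, both immediate here), and that \corref{cor:Cob-Group} is applicable when $H$ has no split maximal torus, which is the transfer reduction already flagged in the introduction. Neither is a serious obstacle, and I do not anticipate a hard step.
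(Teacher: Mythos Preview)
Your proposal is correct and follows essentially the same route as the paper: the paper's proof simply invokes \corref{cor:Cob-Group} together with the Chevalley exact sequence $1 \to G_{\rm aff} \to G \xrightarrow{\alpha_G} A(G) \to 1$, and you do exactly this with the additional (and welcome) care of checking that the quotient map really is $\alpha_G$, that it is a principal bundle in the required sense, and that the split-torus hypothesis is harmless via transfer.
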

\begin{proof}
It follows immediately from Corollary~\ref{cor:Cob-Group} and the fundamental
exact sequence
\[
1 \to G_{\rm aff} \to G \xrightarrow{\alpha_G} A(G) \to 1,
\]
where $G_{\rm aff}$ is the largest connected linear algebraic subgroup of $G$.
\end{proof}

\section{Cobordism of $E/P$}\label{section:Parabolic}
In this section, we complete the proof of Theorem~\ref{thm:Cob-PR} by 
deducing it from Theorem~\ref{thm:Cob-Main}. This is done by using
the following general technique, which the author learned from an unpublished
note \cite{EL} of Edidin and Larsen. 
So let $(G,T,B,W)$ be our given datum, where we have already reduced to the
case when $G$ is reductive. Since $T$ is split, the group $G$ is given by
its root system $\Phi(G,T)$. Let $\Delta$ be a base of $\Phi$ and $B$
the corresponding Borel. By the well known theory of root system 
({\sl cf.} \cite{Springer}), for every subset $I$ of $\Delta$, there exists
a corresponding parabolic subgroup $P_I \supset B$ and every parabolic
subgroup containing $B$ is of this form. Let $\Psi_I$ be the intersection of
$\Phi$ with the span of $I$ and let $P_I = M_IN_I$ be the corresponding
Levi decomposition, where $M_I$ contains $T$ and $\Phi(M_I, T) = \Psi_I$.
The Weyl group of $M_I$ with respect to $T$ is the subgroup of $W$ generated
by the simple reflections corresponding to the elements of $I$. The Borel
subgroup $B_I$ of $M_I$ corresponding to the positive roots in $\Psi_I$
is the intersection of $B$ with $M_I$ and hence the natural map $P \to M_I$
gives an isomorphism ${P_I}/B \xrightarrow{\cong} {M_I}/{B_I}$. Since
every Borel subgroup is conjugate to $B$, we can assume without loss of
generality that $P_I$ is same as $P$. We then obtain tower of fibrations
\begin{equation}\label{eqn:Tower}
E \to E/N \to E/B \xrightarrow{g} E/P \xrightarrow{f} X.
\end{equation}
We need the following Corollary of Theorem~\ref{thm:Cob-Main} to prove the
case of parabolic flag bundles.
\begin{cor}\label{cor:Weyl-PR}
Let $(G,T,B,W)$ be the datum as above and let $E \to X$ be a principal
$G$-bundle. The natural map $\Omega^*(X) \to {\Omega^*(E/B)}^W$ is an
isomorphism.
\end{cor}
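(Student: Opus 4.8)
The plan is to deduce Corollary~\ref{cor:Weyl-PR} directly from Theorem~\ref{thm:Cob-Main} by computing the $W$-invariants of the right-hand side $\Omega^*(X)\otimes_{C(G)}C(T)$ and checking that the unit map $\Omega^*(X)\to\Omega^*(X)\otimes_{C(G)}C(T)$, $w\mapsto w\otimes 1$, lands isomorphically onto them. By Theorem~\ref{thm:Cob-Main}, $\lambda_X$ is an isomorphism of $C(T)$-modules, and since the Weyl group $W$ acts on $E/B$ over $X$ compatibly with its action on $C(T)=\Omega^*(BT)$ (this is the action already discussed in the proof of Corollary~\ref{cor:Finite}), $\lambda_X$ is $W$-equivariant. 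Hence $\Omega^*(E/B)^W\xrightarrow{\cong}\left(\Omega^*(X)\otimes_{C(G)}C(T)\right)^W$, and it remains to identify the latter with $\Omega^*(X)$.

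First I would reduce to a purely algebraic statement. Using Remark~\ref{remk:stack-way} and Corollary~\ref{cor:finite-trivial}, one knows $C(T)$ is free as a $C(G)=C(T)^W$-module: indeed by Corollary~\ref{cor:Finite}, $C(T)\otimes_{C(G)}\bL\cong\bL\otimes_\Q\Lambda$ is a free $\bL$-module of rank $|W|=\dim_\Q\Lambda$ (using Lemma~\ref{lem:VIS}), and a completion/graded-Nakayama argument of the sort used in the proof of Corollary~\ref{cor:Finite} promotes this to freeness of $C(T)$ over $C(G)$ of rank $|W|$. Granting that, for any $C(G)$-module $M$ (here $M=\Omega^*(X)$) one has $M\otimes_{C(G)}C(T)\cong M\otimes_{C(G)}(C(G)\text{-free module})$, and the $W$-action is $M\otimes(\text{action on }C(T))$. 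So the claim becomes: the natural map $M\to\left(M\otimes_{C(G)}C(T)\right)^W$ is an isomorphism for every module $M$ over the graded ring $C(G)=C(T)^W$.

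The key algebraic input is that $C(T)^W\hookrightarrow C(T)$ behaves like a ring of invariants with $C(T)$ free of rank $|W|$, and moreover that this inclusion is \emph{split} as a map of $C(T)^W$-modules via a transfer/averaging-type projector $\tau=\frac{1}{|W|}\sum_{w\in W}w\colon C(T)\to C(T)^W$ (available since we are working rationally). Then $C(T)=C(T)^W\oplus\ker(\tau)$ as $C(T)^W$-modules, where $\ker\tau$ is the $W$-submodule on which the averaging vanishes; tensoring with $M$ over $C(T)^W$ gives $M\otimes_{C(G)}C(T)=M\oplus(M\otimes_{C(G)}\ker\tau)$, and one checks $(M\otimes_{C(G)}\ker\tau)^W=0$ because $\ker\tau$ contains no nonzero $W$-invariants and, being a direct summand (hence flat, indeed free) over $C(G)$, taking invariants commutes suitably with the tensor product. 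Thus $\left(M\otimes_{C(G)}C(T)\right)^W=M$, which is exactly what is needed. Combining with the $W$-equivariant isomorphism $\lambda_X$ yields $\Omega^*(X)\xrightarrow{\cong}\Omega^*(E/B)^W$.

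The main obstacle I anticipate is the freeness/splitting claim for $C(T)$ over $C(T)^W$ at the level of the actual (completed, graded, $\bL$-coefficient) cobordism rings, rather than just after tensoring down to $\bL$: one must ensure the averaging projector is well defined on the inverse limit defining $C(T)$ and that $\ker\tau$ has no invariants there. This should follow from the fact, already used in the proof of Corollary~\ref{cor:Finite}, that $C(T)=\varprojlim_j\Omega^*(BT)_j$ with $W$ acting levelwise and $C(T)^W=\varprojlim_j(\Omega^*(BT)_j)^W$, so that all the finite-level statements (rational representation theory of $W$, freeness of $\bL[x_1,\dots,x_r]$ over $\bL[x_1,\dots,x_r]^W$ by the classical invariant theory recalled around \eqref{eqn:PPS}) pass to the limit. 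Once that structural fact is in hand, the rest is the formal invariant-theory computation sketched above, and the only care needed is bookkeeping of gradings, which is harmless.
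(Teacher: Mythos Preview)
Your approach is correct and is essentially the same as the paper's one-line proof: the paper invokes the projectivity of the trivial module $\Q$ over $\Q[W]$, which is exactly the existence of your averaging projector $\tau=\frac{1}{|W|}\sum_{w\in W}w$, and then uses that $(-)^W=\Hom_{\Q[W]}(\Q,-)$ commutes with $-\otimes_{C(G)}C(T)$ to get $\big(\Omega^*(X)\otimes_{C(G)}C(T)\big)^W=\Omega^*(X)\otimes_{C(G)}C(T)^W=\Omega^*(X)$.

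Your detour through freeness of $C(T)$ over $C(G)$, and the worry you flag as the ``main obstacle'', are unnecessary: once you have the $C(G)$-linear splitting $C(T)=C(G)\oplus\ker\tau$, the vanishing $(M\otimes_{C(G)}\ker\tau)^W=0$ is immediate for \emph{any} $C(G)$-module $M$, since a $W$-invariant element equals its image under ${\rm id}\otimes\tau$, which is zero --- no flatness or freeness enters.
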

\begin{proof} This follows immediately from Theorem~\ref{thm:Cob-Main} using
the fact that the trivial $W$-module $\Q$ is a projective $\Q[W]$-module,
and $M^W = Hom_{C[W]}\left(\Q, M\right)$ for any $\Q[W]$-module $M$.
\end{proof}
To prove Theorem~\ref{thm:Cob-PR}, we notice that $E/B \to E/P$ is a
$M/B$-bundle, where $B$ is a Borel in $M$ containing $T$. If $W_P$ is Weyl
group of $M$ with respect to $T$, we obtain
\begin{equation}\label{eqn:Weyl-PR1}
\Omega^*(E/P) \otimes_{{C(T)}^{W_P}} C(T) \xrightarrow{\cong} \Omega^*(E/B)
\end{equation}
by Theorem~\ref{thm:Cob-Main}. Taking the $W_P$-invariants and using
Corollary~\ref{cor:Weyl-PR}, we get
\[
\Omega^*(E/P) \xrightarrow{\cong} {\Omega^*(E/B)}^{W_P} \cong
\Omega^*(X) \otimes_{{C(T)}^W} {C(T)}^{W_P}.
\]
This completes the proof of Theorem~\ref{thm:Cob-PR}.
$\hspace*{7cm}\hfil \square$

\section{Higher Chow groups of flag-bundles}
Let $X$ be a scheme and let $p : E \to X$ be a principal $G$-bundle and
let $\pi : E/B \to X$ be the flag bundle associated to a Borel subgroup
of $G$. In this section, we describe the higher Chow groups of $E$ and
$E/B$ in terms of the higher Chow groups of $X$ and the characteristic
classes of the maximal torus $T$. From this, we obtain formula
for the higher Chow groups of the flag bundles $\pi : E/P \to X$.
 
Recall from \cite{Bloch} that the higher
Chow groups of $X$ are given by the homology groups $\ch^i(X, n) =
\h_n\left(Z^i(X, \bullet)\right)$ of the cycle complex $Z^i(X, \bullet)$
of codimension $i$ cycles in the simplicial spaces $X \times \Delta^{\bullet}$.
We refer to {\sl loc. cit.} for more detail and for some standard functorial
properties. For a scheme $X$ with $G$-action, the equivariant higher
Chow groups $\ch^*_G(X, \bullet)$ are defined in \cite{EG}. We refer the
reader to \cite{Krishna1} for more details about these groups.
We denote by $\ch^*(X)$, the full Chow groups $\stackrel{\infty}
{\underset{n = 0}\oplus} \ch^*(X,n)$ of $X$. This is a $\ch^*(k)$-algebra
if $X$ is smooth.  

Let $S = \ch^*_T(k) \cong \Q[x_1, \cdots , x_r]$ and 
$S^W = S(G) = \ch^*_G(k) \cong \Q[\sigma_1, \cdots ,
\sigma_r]$ be as before. Then the same construction as in 
Section~\ref{section:prelim} (or as in \cite{Vistoli}) gives 
a natural map of $S$-modules
\begin{equation}\label{eqn:HCG1}
\alpha_X : \ch^*(X) \otimes_{S^W} S \to \ch^*(E/B).
\end{equation}
\begin{lem}\label{lem:HC-trivial}
Let $E/B \times X \xrightarrow{\pi} X$ be the trivial flag bundle. Then
$\alpha_X$ is an isomorphism.
\end{lem}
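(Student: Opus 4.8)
The plan is to establish the isomorphism for the trivial flag bundle $E/B = G/B \times X$ by reducing it to the already-known structure of $\Omega^*(G/B)$ and $\ch^*(G/B)$, exactly mirroring the cobordism case treated in Corollary~\ref{cor:finite-trivial}. First I would observe that for the trivial bundle, the map $\alpha_X$ factors as
\[
\ch^*(X) \otimes_{S^W} S \cong \ch^*(X) \otimes_{\Q} \left(S \otimes_{S^W} \Q\right)
= \ch^*(X) \otimes_{\Q} \Lambda \xrightarrow{\alpha_X} \ch^*(G/B \times X),
\]
where the first isomorphism uses that $S = \Q \otimes_{S^W} S$ identification coming from the trivialization (the characteristic map $X \to BG$ factors through $\Spec(k)$), and $\Lambda = S/I$ as in Section~\ref{section:ALG-R}. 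Since $S$ is free (polynomial) over $S^W$, no completion subtleties arise here, which makes the Chow case cleaner than the analogous Corollary~\ref{cor:Finite}.

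Next, the key input is that $\ch^*(G/B) \cong \Lambda$ as graded $\Q$-algebras (the classical Borel presentation of the Chow ring of the flag variety), together with the projective bundle / cellular structure of $G/B$ which gives $\ch^*(G/B \times X) \cong \ch^*(X) \otimes_{\Q} \ch^*(G/B)$. The latter follows because $G/B$ has a cellular decomposition by Schubert cells, so its Chow groups are free and the exterior product map is an isomorphism for the product with any $X$; this is the higher-Chow analogue of \cite[Theorem~3.1]{Levine1} cited in Corollary~\ref{cor:finite-trivial}, and the higher version follows from the localization sequence for higher Chow groups applied cell by cell. Under these identifications, $\alpha_X$ becomes precisely the exterior product map tensored up, hence an isomorphism; and it is a ring map when $X$ is smooth because the characteristic class construction is multiplicative.

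I would carry out the steps in this order: (1) record the factorization of $\alpha_X$ through $\ch^*(X) \otimes_{\Q} \Lambda$ using the triviality of the bundle; (2) invoke $\ch^*(G/B) \cong \Lambda$ and the Künneth-type isomorphism $\ch^*(G/B \times X) \cong \ch^*(X) \otimes_{\Q} \ch^*(G/B)$ coming from the cellular structure of $G/B$; (3) identify the composite with the exterior product and conclude it is an isomorphism, noting the ring structure in the smooth case.

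The main obstacle I anticipate is justifying the Künneth isomorphism $\ch^*(G/B \times X, n) \cong \bigoplus_{p} \ch^{*-p}(X, n) \otimes_{\Q} \ch^p(G/B)$ for the \emph{higher} Chow groups of a not-necessarily-smooth $X$: one needs that the Schubert cell stratification of $G/B$ (whose strata are affine spaces) yields, via the long exact localization sequence for higher Chow groups, a direct sum decomposition after tensoring with $\ch^*(X,\bullet)$. This requires knowing that the relevant connecting maps vanish, which holds because each stratum contributes a free summand generated by a Schubert class pulled back appropriately; the argument is essentially the higher-Chow projective-bundle-formula induction, and care is needed because $X$ may be singular so one works with the Borel--Moore (homological) higher Chow groups throughout. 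Everything else is formal.
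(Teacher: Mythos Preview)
Your overall strategy is sound and, up to one notational slip, yields a correct proof; however, it is not the route the paper takes. The paper does \emph{not} appeal directly to the Schubert cell decomposition. Instead it first treats $X=\Spec(k)$ via equivariant higher Chow groups: combining the isomorphism $\ch^*_G(G/B)\otimes_{S^W}\Q\xrightarrow{\cong}\ch^*(G/B)$ from \cite{Krishna2} with the identification $\ch^*_G(G/B)\cong\ch^*_T(k)$ from \cite{Krishna1}, one obtains $\ch^*(k)\otimes_{S^W}S\cong\ch^*(G/B)$ by a short tensor manipulation. The general trivial case then follows from a K\"unneth isomorphism $\ch^*(X)\otimes_{\ch^*(k)}\ch^*(G/B)\cong\ch^*(X\times G/B)$ quoted from \cite{Krishna1}. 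Your route via the cellular filtration is more elementary and self-contained, avoiding the equivariant machinery entirely, at the price of having to justify by hand the splitting of the localization long exact sequences; the paper's route is shorter because it imports those two results as black boxes. Interestingly, your approach is the one that more closely parallels the paper's own cobordism argument in Corollary~\ref{cor:finite-trivial}.

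One genuine correction is needed. In the paper's conventions $\ch^*(Y)$ denotes the \emph{full} higher Chow groups $\bigoplus_{i,n}\ch^i(Y,n)$, so your assertion ``$\ch^*(G/B)\cong\Lambda$'' is false as written: the right-hand side should be $\ch^*(k)\otimes_\Q\Lambda$. Correspondingly, your K\"unneth formula must be stated either as $\ch^*(G/B\times X)\cong\ch^*(X)\otimes_{\ch^*(k)}\ch^*(G/B)$ (tensor over $\ch^*(k)$, not $\Q$) or as $\ch^*(G/B\times X)\cong\ch^*(X)\otimes_\Q\ch^*(G/B,0)$ with the \emph{ordinary} Chow ring on the right. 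The cellular induction you sketch in your final paragraph actually proves the second version, since each affine cell contributes a copy of $\ch^*(X)$ by homotopy invariance; once you make this adjustment your chain of isomorphisms goes through.
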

\begin{proof}
We first assume $X = {\rm Spec}(k)$. By \cite[Theorem~1.6]{Krishna2},
the natural map $\ch^*_G(G/B) \otimes_{S^W} \Q \to \ch^*(G/B)$
is an isomorphism since $G/B$ is smooth and projective.
On the other hand, we have $\ch^*_G(G/B) \cong \ch^*_T(k)$ by
\cite[Corollary~3.2]{Krishna1}. In particular, we get
\[
\begin{array}{lll}
\ch^*(k) \otimes_{S^W} S & \cong & \left(\ch^*_T(k) \otimes_S \Q\right) 
\otimes_{S^W} S \\
& \cong & \ch^*_T(k) \otimes_S \left(S \otimes_{S^W} \Q\right) \\
& \cong & \ch^*_T(k) \otimes_{S^W} \Q \\
& \cong & \ch^*(G/B),
\end{array}
\]
where the first isomorphism holds by \cite[Theorem~1.6]{Krishna2}.
If $G/B \times X \xrightarrow{\pi} X$ is the trivial bundle, then
we have 
\[
\begin{array}{lll}
\ch^*(X) \otimes_{S^W} S & \cong & \left(\ch^*(X) \otimes_{\ch^*(k)} \ch^*(k)
\right) \otimes_{S^W} S \\
& \cong & \ch^*(X) \otimes_{\ch^*(k)} \left(\ch^*(k) \otimes_{S^W} S\right) \\
& \cong & \ch^*(X) \otimes_{\ch^*(k)} \ch^*(G/B) \\
& \cong & \ch^*(X \times G/B),
\end{array}
\]
where the third isomorphism follows from the case of $X = {\rm Spec}(k)$
and the last isomorphism follows from \cite[Lemma~3.6]{Krishna1}.
This completes the proof.
\end{proof}
\noindent
{\bf {Proof of Theorem~\ref{thm:HC-Main}:}} 
As in the case of cobordism, we can deduce the case of parabolic subgroups
from the case of Borel subgroups. So we prove the result for the flag bundles
of the type $E/B \to X$.
We prove by induction on the dimension of $X$. If $X$ is zero-dimensional,
this follows from the case of $X = {\rm Spec}(k)$ shown above and the
analogue of Corollary~\ref{cor:Retract} for the higher Chow groups.
In general, following the proof in the cobordism case, 
we can find a dense open subset $U \stackrel{j}{\inj} X$ and a finite \'etale
cover $g : U' \to U$ such that the pull-back of $E/B$ is the trivial flag
bundle on $U'$ and the result holds for $U'$ as shown above. We deduce the
result for $U$ by the higher Chow groups analogue of 
Corollary~\ref{cor:Retract}.
Let $Z$ be the complement of $U$ in $X$ with the reduced closed subscheme
structure. We get the following diagram of long exact localization sequences
\[
\xymatrix@C.3pc{
\ch^*(U) \otimes S \ar[r]^{\partial} \ar[d]^{\alpha_U} &
\ch^*(Z) \otimes S \ar[r]^{i_*} \ar[d]^{\alpha_Z} &
\ch^*(X) \otimes S \ar[r]^{j^*} \ar[d]^{\alpha_X} &
\ch^*(U) \otimes S \ar[r]^{\partial} \ar[d]^{\alpha_U} &
\ch^*(Z) \otimes S \ar[d]^{\alpha_Z} \\
\ch^*({E_U}/B) \ar[r]_{{\partial}'} & \ch^*({E_Z}/B) \ar[r]_{i'_*} &
\ch^*(E/B) \ar[r]_{j'^*} & \ch^*({E_U}/B) \ar[r]_{{\partial}'} & 
\ch^*({E_Z}/B),}
\]
where the tensor product in the top row is over the ring $S^W$.
In particular, this row is exact by the flatness of $S$ over $S^W$.
Since the dimension of $Z$ is strictly smaller than that of $X$, we see that
the maps $\alpha_Z$ are isomorphisms by induction. 
We have shown above that the maps
$\alpha_U$ are also isomorphisms. We conclude that $\alpha_X$ is an
isomorphism.
$\hspace*{8cm} \hfil \square$ 
\begin{cor}\label{cor:HC-Main-group}
Let $G$ be a connected linear algebraic group with a split maximal torus $T$.
Let $E \xrightarrow{p} X$ be a principal $G$-bundle over a scheme $X$. Then
\[
\ch^*(E) \xrightarrow{\cong}
\ch^*(X) \otimes_{S(G)} \Q \cong \frac{\ch^*(X)}{I\ch^*(X)},
\]
where $I$ is the ideal of $S(G)$ generated by the Chern classes of
$G$-homogeneous line bundles. This is a $\Q$-algebra isomorphism if $X$ is
smooth.
\end{cor}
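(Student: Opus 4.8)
The plan is to mirror the three-line proof of Corollary~\ref{cor:Cob-Group}, replacing algebraic cobordism by higher Chow groups and the Lazard ring $\bL$ by $\Q$. The chain of isomorphisms I am after is
\[
\ch^*(E) \;\cong\; \ch^*_T(E)\otimes_{S(T)}\Q \;\cong\; \ch^*(E/B)\otimes_{S(T)}\Q \;\cong\; \bigl(\ch^*(X)\otimes_{S(G)}S(T)\bigr)\otimes_{S(T)}\Q \;\cong\; \ch^*(X)\otimes_{S(G)}\Q,
\]
after which the identification with $\ch^*(X)/I\ch^*(X)$ is purely formal. Throughout, $E$ is viewed as a $T$-scheme with free $T$-action coming from $T\subset G$, and $E\to E/T$ is a principal $T$-bundle.

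The first ingredient is the higher Chow analogue of \cite[Theorem~7.4]{Krishna3}: the natural map $\ch^*_T(E)\otimes_{S(T)}\Q\to\ch^*(E)$ is an isomorphism (a $\Q$-algebra isomorphism when $X$ is smooth). If a direct citation for this is not at hand, I would prove it from scratch: since $T\cong\G_m^{\,r}$ is split, $E\to E/T$ factors as a tower of $\G_m$-torsors, namely the complements of the zero sections in the line bundles $M_1,\dots,M_r$ on $E/T$ associated to the characters $\chi_1,\dots,\chi_r$. Iterating the long exact localization sequence for higher Chow groups, together with homotopy invariance identifying $\ch^*(M_i)$ with $\ch^*$ of its base and the self-intersection formula computing the Gysin map as multiplication by $c_1(M_i)$, yields $\ch^*(E)\cong\ch^*(E/T)/\bigl(c_1(M_1),\dots,c_1(M_r)\bigr)$; under the free-action identification $\ch^*_T(E)\cong\ch^*(E/T)$ this is exactly $\ch^*_T(E)\otimes_{S(T)}\Q$.

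Next I would identify $\ch^*_T(E)$ with $\ch^*(E/B)$ as $S(T)$-modules (and as algebras when $X$ is smooth). The free-action property of equivariant higher Chow groups (\cite{EG}, \cite{Krishna1}) gives $\ch^*_T(E)\cong\ch^*(E/T)$, and since $E/T\to E/B$ is a torsor under the unipotent radical $B^u$ of $B$, which has a finite filtration by vector groups by \cite[XXII, 5.9.5]{DG}, repeated homotopy invariance gives $\ch^*(E/T)\cong\ch^*(E/B)$. Feeding this into Theorem~\ref{thm:HC-Main} in the case $P=B$ (so $W_P$ is trivial and $S(P)=S(T)$), and using associativity of tensor products, produces $\ch^*(E)\cong\ch^*(X)\otimes_{S(G)}\Q$. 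Finally, the map $S(T)=\Q[x_1,\dots,x_r]\to\Q$ used above is the augmentation killing the Chern classes of the characters of $T$; its restriction to $S(G)=S(T)^W$ is the augmentation of the graded connected $\Q$-algebra $S(G)$, whence $\ch^*(X)\otimes_{S(G)}\Q=\ch^*(X)/I\ch^*(X)$ with $I=S(G)^+$ the ideal generated by the Chern classes of the $G$-homogeneous line bundles, exactly as in Corollary~\ref{cor:Cob-Group}. When $X$ is smooth every arrow in sight is a $\Q$-algebra map, so the ring statement follows.

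I expect the only real obstacle to be the first step: securing the base-change isomorphism $\ch^*_T(E)\otimes_{S(T)}\Q\xrightarrow{\cong}\ch^*(E)$ for higher Chow groups. Unlike the cobordism setting, this may not be directly quotable, and its hands-on proof requires some care with the self-intersection formula in the localization sequence; once it and Theorem~\ref{thm:HC-Main} are available, everything else is formal bookkeeping with tensor products.
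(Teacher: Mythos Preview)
Your proposal is correct and follows exactly the approach the paper takes: the paper's proof is the single sentence ``The proof is exactly same as the proof of Corollary~\ref{cor:Cob-Group},'' and you have faithfully unpacked what that means, replacing $\bL$ by $\Q$, $C(T)$ by $S(T)$, the cobordism base-change \cite[Theorem~7.4]{Krishna3} by its higher Chow analogue, and Theorem~\ref{thm:Cob-Main} by Theorem~\ref{thm:HC-Main}. Your caution about the first step is reasonable but not a genuine obstacle here, since the long exact localization sequence for higher Chow groups makes the $\G_m$-torsor argument you sketch go through cleanly (indeed more easily than in the cobordism case); alternatively, results of the type in \cite{Krishna2} already cover this.
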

\begin{proof}
The proof is exactly same as the proof of Corollary~\ref{cor:Cob-Group}.
\end{proof}
The following corollary recovers a result of Brion 
\cite[Proposition~2.8]{Brion} for the ordinary Chow groups $\ch^*(G,0)$
of connected algebraic groups as a special case.
\begin{cor}\label{cor:HC-Main-Group*}
Let $G$ be a connected algebraic group (not necessarily linear) over $k$ and
let $G \xrightarrow{\alpha_G} A(G)$ be the albanese morphism of $G$.
Then there is a $\Q$-algebra isomorphism
\[
\ch^*(G) \cong \frac{\ch^*(A(G))}{I\ch^*(A(G))}.
\]
\end{cor}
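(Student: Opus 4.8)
The plan is to deduce this immediately from Corollary~\ref{cor:HC-Main-group}, applied to the Albanese fibration, in exactly the way Corollary~\ref{cor:Cob-Group*} was deduced from Corollary~\ref{cor:Cob-Group}. First I would invoke Chevalley's structure theorem for connected algebraic groups over a field of characteristic zero: there is a unique short exact sequence of algebraic groups
\[
1 \to G_{\rm aff} \to G \xrightarrow{\alpha_G} A(G) \to 1,
\]
in which $G_{\rm aff}$ is the largest connected linear algebraic subgroup of $G$ (it is affine and normal) and $A(G)$ is an abelian variety, namely the Albanese of $G$. Under this identification $\alpha_G$ is the quotient morphism $G \to G/G_{\rm aff}$, so that $G \to A(G)$ is a principal $G_{\rm aff}$-bundle for the translation action of $G_{\rm aff}$ on $G$.

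Next I would note that $G_{\rm aff}$ is a connected linear algebraic group, so Corollary~\ref{cor:HC-Main-group} applies to this principal bundle once a split maximal torus is arranged. That last point is the only hypothesis needing comment: a maximal torus of $G_{\rm aff}$ need not split over $k$, but since everything is taken with rational coefficients, the standard transfer argument --- base change to a finite extension $k'/k$ over which the torus splits, and use that the composite of restriction and corestriction is multiplication by $[k':k]$, invertible in $\Q$ --- reduces the statement to the split case. Corollary~\ref{cor:HC-Main-group} then yields
\[
\ch^*(G) \xrightarrow{\cong} \ch^*(A(G)) \otimes_{S(G_{\rm aff})} \Q \cong \frac{\ch^*(A(G))}{I\,\ch^*(A(G))},
\]
where $I$ is the ideal of $S(G_{\rm aff})$ generated by the Chern classes of the $G_{\rm aff}$-homogeneous line bundles; and since $A(G)$ is smooth, being an abelian variety, the same corollary promotes this to an isomorphism of $\Q$-algebras.

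There is no real obstacle here: the statement is a formal consequence of Corollary~\ref{cor:HC-Main-group} together with the Chevalley decomposition of $G$. The only points to watch are that the hypotheses of Corollary~\ref{cor:HC-Main-group} are genuinely satisfied --- linearity of $G_{\rm aff}$ is built into its definition, and the split-torus requirement is removed by the rational transfer argument above --- and that $\alpha_G$ is indeed a $G_{\rm aff}$-torsor, which is immediate from $A(G) = G/G_{\rm aff}$. Specializing to $n = 0$ recovers Brion's description of the Chow ring $\ch^*(G,0)$.
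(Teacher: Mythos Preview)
Your proposal is correct and follows exactly the route the paper takes: the paper's proof simply says it is identical to that of Corollary~\ref{cor:Cob-Group*}, which in turn just invokes the Chevalley exact sequence $1 \to G_{\rm aff} \to G \to A(G) \to 1$ and applies the corresponding principal-bundle corollary. Your added remarks on the transfer argument for the split-torus hypothesis and on the smoothness of $A(G)$ are appropriate elaborations of points the paper leaves implicit.
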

\begin{proof}
The proof is exactly same as the proof of Corollary~\ref{cor:Cob-Group*}.
\end{proof}
%\begin{ack}
\noindent\emph{Acknowledgments.}
This work was done while the author was visiting the Institute Fourier,
Universit\'e de Grenoble in June, 2010. He would like to thank Michel 
Brion for the invitation and the local financial support during his 
stay.
%\end{ack} 

\end{document}